\theoremstyle{plain}
\newtheorem{theorem}{Theorem}[section]
\newtheorem*{theorem*}{Theorem}
\newtheorem{lemma}[theorem]{Lemma}
\newtheorem{corollary}[theorem]{Corollary}
\theoremstyle{definition}
\newtheorem*{definition*}{Definition}
\numberwithin{equation}{section}
\newcommand{\C}{\mathbb{C}}
\newcommand{\D}{\mathbb{D}}
\newcommand{\N}{\mathbb{N}}
\newcommand{\Lip}{\mathcal{L}}
\newcommand{\Bloch}{\mathcal{B}}
\renewcommand{\mod}[1]{\left|#1\right|}
\def \X {\mathcal{X}}
\def \Y {\mathcal{Y}}
\def\a{\alpha}
\def\e{\varepsilon}
\def\g{\gamma}
\def\o{\omega}
\def\s{\sigma}
\def\t{\tau}
\def\D{\mathbb D}
\def\O{\Omega}
\def\>{\geq}
\def\ben{\begin{eqnarray}}
	\def\eeqn{\end{eqnarray}}
\title[$M_\psi$ between Lipschitz-type spaces]
{Multiplication Operators between Lipschitz-Type Spaces on a Tree}
\author{Robert F.~Allen\textsuperscript{1}, Flavia Colonna\textsuperscript{2}, and Glenn R.~Easley\textsuperscript{3}}
\address{\textsuperscript{1}Department of Mathematics, University of Wisconsin--La Crosse}
\address{\textsuperscript{2}Department of Mathematical Sciences, George Mason University}
\address{\textsuperscript{3}System Planning Corporation}
\email{allen.rob3@uwlax.edu, fcolonna@gmu.edu, geasley@sysplan.com}
\subjclass[2010]{primary: 47B38; secondary: 05C05}
\keywords{Multiplication operators, Trees, Lipschitz space}
\begin{document}
%-------------------------------------------------------------------------------------------------
% Abstract
%-------------------------------------------------------------------------------------------------
\begin{abstract} Let $\Lip$ be the space of complex-valued functions $f$ on the set of vertices $T$ of an rooted infinite tree rooted at $o$ such that the difference of the values of $f$ at neighboring vertices remains bounded throughout the tree, and let $\Lip_{\textbf{w}}$ be the set of functions $f\in \Lip$ such that $|f(v)-f(v^-)|=O(|v|^{-1})$, where $|v|$ is the distance between $o$ and $v$ and $v^-$ is the neighbor of $v$ closest to $o$. In this article, we characterize the bounded and the compact multiplication operators between $\Lip$ and $\Lip_{\textbf{w}}$, and provide operator norm and essential norm estimates. Furthermore, we characterize the bounded and compact multiplication operators between $\Lip_{\textbf{w}}$ and the space $L^\infty$ of bounded functions on $T$ and determine their operator norm and their essential norm. We establish that there are no isometries among the multiplication operators between these spaces.
\end{abstract}

\maketitle

%--------------------------------------------------------------------------------------
\section{Introduction}
Let $\X$ and $\Y$ be complex Banach spaces of functions defined on a set $\O$.  For a complex-valued function $\psi$ defined on $\O$, the \textit{multiplication operator with symbol $\psi$ from $\X$ to $\Y$} is defined as
$$M_\psi f = \psi f, \hbox{ for all }f \in \X.$$  A fundamental objective in the study of the operators with symbol is to tie the properties of the operator to the function-theoretic properties of the symbol.

When $\O$ is taken to be the open unit disk $\D$ in the complex plane, an important space of functions to study is the {\textit{Bloch space}}, defined as the set $\Bloch$ of the analytic functions $f:\D \to \C$ for which $$\beta_f = \sup_{z \in \D} (1-\mod{z}^2)\mod{f'(z)} < \infty.$$

	The Bloch space can also be described as the set consisting of the Lipschitz functions between metric spaces from $\D$ endowed with the Poincar\'{e} distance $\rho$ to $\C$ endowed with the Euclidean distance, a fact that was proved by the second author in \cite{Colonna:89} (see also \cite{Zhu}). In fact, $f\in \mathcal{B}$ if and only if there exist $\beta > 0$ such that $$\mod{f(z)-f(w)} \leq \beta\rho(z,w),$$ and $$\beta_f = \sup_{z \neq w} \frac{\mod{f(z)-f(w)}}{\rho(z,w)}.$$

More recently, considerable research has been carried out in the field of operator theory when the set $\O$ is taken to be a discrete structure, such as a discrete group or a graph. In this work, we consider the case when $\O$ is taken to be an infinite tree.

By a \textit{tree\/} $T$ we mean a locally finite, connected, and simply-connected graph, which, as a set, we identify with the collection of its vertices. Two vertices
$u$ and $v$ are called \textit{neighbors} if there is an edge %$[u,v]$
connecting them, and we use the notation $u\sim v$. A vertex is called \textit{terminal} if it has a
unique neighbor. A \textit{path} is a finite or infinite sequence of vertices $[v_0,v_1,\dots]$ such
that $v_k\sim v_{k+1}$ and  $v_{k-1}\ne v_{k+1}$, for all $k$.

Given a tree $T$ rooted at $o$ and a vertex $u\in T$, a vertex $v$ is called a \textit{descendant} of $u$ if $u$ lies in the unique path from $o$ to $v$. The vertex $u$ is then called an \textit{ancestor} of $v$.  Given a vertex $v\ne o$, we denote by $v^-$ the unique neighbor which is an ancestor of $v$. %The vertex $v$ is called a \textit{child} of $v^-$.
 For $v\in T$, The set $S_v$ consisting of $v$ and all its descendants is called the \textit{sector} determined by $v$.

Define the \textit{length} of a finite path $[u=u_0,u_1,\dots,v=u_n]$ (with $u_k\sim u_{k+1}$ for $k=0,\dots, n$) to be the number $n$ of edges connecting $u$ to $v$. The \textit{distance}, $d(u,v)$, between vertices $u$ and $v$ is the length of the path connecting $u$ to $v$.  The tree $T$ is a metric space under the distance $d$. Fixing $o$ as the root of the tree, we define the \textit{length} of a vertex $v$, by $|v|=d(o,v)$.  By a \textit{function on a tree} we mean a complex-valued function on the set of its vertices.

In this paper, the tree will be assumed to be rooted at a vertex $o$ and without terminal vertices (and hence infinite).

Infinite trees are discrete structures which exhibit significant geometric and potential-theoretic characteristics that are present in the Poincar\'e disk $\D$. For instance, they have a boundary, which is defined as the set of equivalence classes of paths which differ by finitely many vertices. The union of the boundary with the tree yields a compact space. A useful resource for the potential theory on trees illustrating the commonalities with the disk is \cite{Cartier}. In \cite{CohenColonna:94} it was shown that, if the tree has the property that all its vertices have the same number of neighbors, then there is a natural embedding of the tree in the unit disk such that the edges of the tree are arcs of geodesics in $\D$ with the same hyperbolic length and the set of cluster points of the vertices is the entire unit circle.

In \cite{ColonnaEasley:10}, the last two authors defined the \textit{Lipschitz space} $\Lip$ on a tree $T$ as the set consisting of the functions $f:T \to \C$ which are Lipschitz with respect to the distance $d$ on $T$ and the Euclidean distance on $\C$.  For this reason, the Lipschitz space $\Lip$ can be viewed as a discrete analogue of the Bloch space $\Bloch$.  It was also shown that the Lipschitz functions on $T$ are precisely the functions for which $$\|Df\|_\infty = \sup_{v \in T^*} Df(v) < \infty,$$ where $Df(v) = |f(v)-f(v^-)|$ and  $T^* = T\setminus\{o\}$.  Under the norm $$\|f\|_{\Lip} = |f(o)| + \|Df\|_\infty,$$ $\Lip$ is a Banach space containing the space $L^\infty$ of the bounded functions on $T$. Furthermore, for $f\in L^\infty$, $\|f\|_\Lip\le 2\|f\|_\infty$.

 The \textit{little Lipschitz space} is defined as $$\Lip_0=\left\{f\in\Lip: \lim_{|v|\to\infty}Df(v)=0\right\},$$
and was proven to be a separable closed subspace of $\Lip$. We state the following results that will be useful in the present work.

\begin{lemma}[Lemma 3.4 of \cite{ColonnaEasley:10}]\label{old} $\text{}$
\begin{enumerate}
\item[{\rm{(a)}}] If $f \in \Lip$ and $v \in T$, then $$|f(v)| \leq |f(o)| + |v|\\ \|Df\|_\infty.$$ In particular, if $\|f\|_\Lip\le 1$, then $|f(v)|\le |v|$ for each $v\in T^*$.
\smallskip

\item[{\rm{(b)}}] If $f\in\Lip_0$, then $$\lim_{|v|\to\infty}\frac{f(v)}{|v|}=0.$$
\end{enumerate}
\end{lemma}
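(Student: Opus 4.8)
The plan is to exploit the telescoping structure of $f$ along the geodesic from the root to a given vertex, thereby converting control on the consecutive differences $Df$ into control on $f$ itself.

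For part (a), I would fix $v \in T$ with $|v| = n$ and let $[o = v_0, v_1, \dots, v_n = v]$ be the unique path from $o$ to $v$. Since consecutive vertices along this path satisfy $v_{k-1} = v_k^-$, I would write the telescoping sum
$$f(v) - f(o) = \sum_{k=1}^{n} \bigl(f(v_k) - f(v_k^-)\bigr),$$
so that by the triangle inequality $|f(v) - f(o)| \le \sum_{k=1}^{n} Df(v_k) \le n\,\|Df\|_\infty = |v|\,\|Df\|_\infty$. A further application of the triangle inequality yields the stated bound $|f(v)| \le |f(o)| + |v|\,\|Df\|_\infty$. For the ``in particular'' clause, when $\|f\|_\Lip = |f(o)| + \|Df\|_\infty \le 1$ and $v \in T^*$ (so that $|v| \ge 1$), I would use $|f(o)| \le |v|\,|f(o)|$ and combine to get $|f(v)| \le |v|\bigl(|f(o)| + \|Df\|_\infty\bigr) = |v|\,\|f\|_\Lip \le |v|$.

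For part (b), the idea is a Ces\`aro/Stolz-type splitting of the same telescoping sum. Given $\e > 0$, the hypothesis $f \in \Lip_0$ furnishes an $N$ with $Df(w) < \e$ whenever $|w| > N$. For $v$ with $|v| = n > N$, I would split the sum along the geodesic into a fixed initial block of length $N$ and a tail, estimating
$$|f(v) - f(o)| \le \sum_{k=1}^{N} Df(v_k) + \sum_{k=N+1}^{n} Df(v_k) \le N\,\|Df\|_\infty + (n - N)\e.$$
Dividing by $|v| = n$ gives $\frac{|f(v)|}{|v|} \le \frac{|f(o)| + N\,\|Df\|_\infty}{n} + \e$, and letting $n \to \infty$ shows $\limsup_{|v|\to\infty} |f(v)|/|v| \le \e$; since $\e > 0$ is arbitrary, the limit is $0$.

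The argument is essentially routine; the only step demanding genuine care is part (b), where one must keep the initial block of the sum \emph{fixed}, with length $N$ depending only on $\e$ and not on $v$, so that its contribution is diluted to zero upon dividing by $|v|$, while the tail is controlled uniformly by $\e$. Everything else reduces to the triangle inequality applied to a telescoping sum.
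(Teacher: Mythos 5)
Your proof is correct and is the natural argument: the telescoping sum along the geodesic $[o=v_0,\dots,v_n=v]$ with the triangle inequality for part (a), and the splitting into a fixed initial block (depending only on $\e$) plus a tail controlled by $\e$ for part (b), with the correct observation that $|v_k|=k>N$ guarantees $Df(v_k)<\e$ on the tail. Note that this paper does not reproduce a proof at all --- it imports the lemma from the earlier work of Colonna and Easley --- but your argument is exactly the standard one, including the handling of the ``in particular'' clause via $|f(o)|\le|v|\,|f(o)|$ for $|v|\ge 1$.
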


\begin{lemma}[Proposition 2.4 of \cite{ColonnaEasley:10}]\label{weakconv_Lip} Let $\{f_n\}$ be a sequence of functions in $\Lip_0$ converging to $0$ pointwise in $T$ and such that $\{\|f_n\|_\Lip\}$ is bounded. Then $f_n\to 0$ weakly in $\Lip_0$.
\end{lemma}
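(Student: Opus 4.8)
The plan is to realize $\Lip_0$ concretely as a familiar sequence space, read off its dual from that identification, and thereby reduce the weak convergence to a single application of dominated convergence.

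First I would show that the linear map $\Lambda\colon \Lip_0 \to \C \oplus_1 c_0(T^*)$ defined by $\Lambda f = (f(o), Df)$ is a surjective isometry, where $c_0(T^*)$ denotes the functions on $T^*$ vanishing at infinity and the target carries the norm $\|(a,g)\| = |a| + \|g\|_\infty$. That $\Lambda$ is an isometry is immediate from $\|f\|_\Lip = |f(o)| + \|Df\|_\infty$ together with the fact that $Df \in c_0(T^*)$ is precisely the condition $f \in \Lip_0$. Surjectivity follows by reconstructing $f$ from a pair $(a,g)$: setting $f(v) = a + \sum_{k=1}^{n} g(v_k)$ along the geodesic $o = v_0 \sim v_1 \sim \cdots \sim v_n = v$ gives $f(o) = a$ and $Df = g$, and $f \in \Lip_0$ since $g$ vanishes at infinity. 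Thus $\Lip_0$ is isometrically isomorphic to $\C \oplus_1 c_0(T^*)$.

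Second, since the dual of $c_0$ is $\ell^1$, the dual of $\Lip_0$ is $\C \oplus_\infty \ell^1(T^*)$. Concretely, every bounded linear functional $\Phi$ on $\Lip_0$ is represented as $\Phi(f) = b\,f(o) + \sum_{v \in T^*} h(v)\,Df(v)$ for some $b \in \C$ and some $h \in \ell^1(T^*)$. Fixing such a $\Phi$ and writing $M = \sup_n \|f_n\|_\Lip < \infty$, pointwise convergence gives $f_n(o) \to 0$, so $b\,f_n(o) \to 0$; it also gives $Df_n(v) = f_n(v) - f_n(v^-) \to 0$ for each fixed $v$, while $|Df_n(v)| \le \|f_n\|_\Lip \le M$ for all $n$ and $v$. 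Since the sequence $M\,|h(v)|$ is summable over $T^*$, dominated convergence yields $\sum_{v \in T^*} h(v)\,Df_n(v) \to 0$, and hence $\Phi(f_n) \to 0$. As $\Phi$ was arbitrary, $f_n \to 0$ weakly in $\Lip_0$.

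The one step demanding genuine care is the identification of the dual: verifying the surjectivity of $\Lambda$ (and thereby that $(\Lip_0)^* = \C \oplus_\infty \ell^1(T^*)$ with no missing functionals) is where the argument has content. Once that representation of an arbitrary $\Phi$ is in hand, the convergence is a routine dominated-convergence estimate, since the hypotheses supply exactly the pointwise decay of $Df_n$ and the uniform bound needed to dominate the summand.
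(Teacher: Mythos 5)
Your proof is correct. One point of context: the paper does not prove this lemma itself --- it is imported as Proposition 2.4 of \cite{ColonnaEasley:10} --- so there is no in-paper argument to compare against; your argument stands or falls on its own, and it stands. The route you take (realize $\Lip_0$ isometrically as $\C \oplus_1 c_0(T^*)$, read off the dual as $\C \oplus_\infty \ell^1(T^*)$ from $c_0^* = \ell^1$, then finish by dominated convergence) is the natural one, and each step checks out: the inverse of $\Lambda$, built by summing increments along the unique path from $o$ to $v$, is well defined precisely because $T$ is a tree, and the equivalence of ``$f \in \Lip_0$'' with ``the increments lie in $c_0(T^*)$'' uses the standing assumption that $T$ is locally finite, so that the balls $\{v : |v| \le N\}$ are finite sets. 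The only repair needed is notational but worth making explicit: the paper defines $Df(v) = |f(v)-f(v^-)|$ with absolute value, so the map $f \mapsto (f(o), Df)$ is not linear as written; the second component of $\Lambda f$ must be the signed difference $v \mapsto f(v)-f(v^-)$, whose modulus is $Df(v)$, and this is evidently what you intend since you later write $Df_n(v) = f_n(v)-f_n(v^-)$ in the dominated-convergence step. With that adjustment, the isometry, the surjectivity, the representation $\Phi(f) = b\,f(o) + \sum_{v \in T^*} h(v)\left(f(v)-f(v^-)\right)$ with $h \in \ell^1(T^*)$, and the final limit interchange (dominating function $M|h|$, summable since $T^*$ is countable) are all valid, giving $\Phi(f_n) \to 0$ for every $\Phi \in (\Lip_0)^*$, which is exactly weak convergence to $0$ in $\Lip_0$.
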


In \cite{AllenColonnaEasley:10}, we introduced the \textit{weighted Lipschitz space}  on a tree $T$ as the set $\Lip_{\textbf{w}}$ of the functions $f:T \to \C$ such that $$\sup_{v \in T^*} |v|Df(v) < \infty.$$  The interest in this space is due to its connection to the bounded multiplication operators on $\Lip$. Specifically, it was shown in \cite{ColonnaEasley:10} that the bounded multiplication operators on $\Lip$ are precisely those operators $M_\psi$ whose symbol $\psi$ is a bounded function in $\Lip_{\textbf{w}}$. The space $\Lip_{\textbf{w}}$ was shown to be a Banach space under the norm $$\|f\|_{\textbf{w}} = |f(o)| + \sup_{v \in T^*} |v|Df(v).$$

	The \textit{little weighted Lipschitz space} was defined as $$\Lip_{\textbf{w},0} = \left\{f\in\Lip_{\textbf{w}} : \lim_{|v|\to\infty}|v|Df(v)=0\right\},$$ and was shown to be a closed separable subspace of $\Lip_{\textbf{w}}$.

In this paper, we shall make repeated use of the following results proved in \cite{AllenColonnaEasley:10}.

\begin{lemma}[Propositions 2.1 and 2.6 of \cite{AllenColonnaEasley:10}]\label{new} $\text{}$
\begin{enumerate}
\item[{\rm{(a)}}] If {\rm$f \in \Lip_{\textbf{w}}$}, and $v\in T^*$, then {\rm$$|f(v)| \leq (1+\log|v|)\|f\|_{\textbf{w}}.$$}

\item[{\rm{(b)}}] If {\rm$f\in\Lip_{{\textbf{w}},0}$}, then $$\lim_{|v|\to\infty}\frac{f(v)}{\log|v|}=0.$$
\end{enumerate}
\end{lemma}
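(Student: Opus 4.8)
The plan is to recover $f(v)$ from $f(o)$ by telescoping the increments along the unique geodesic from the root to $v$, and then to control each increment using the weighted Lipschitz seminorm. Both parts rest on this single device, so I would set it up once and reuse it.

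First I would fix $v\in T^*$ with $|v|=n$ and write the geodesic from $o$ to $v$ as $[o=v_0,v_1,\dots,v_n=v]$, so that $v_{k-1}=v_k^-$ and $|v_k|=k$ for each $k$. Telescoping gives $f(v)-f(o)=\sum_{k=1}^n\bigl(f(v_k)-f(v_k^-)\bigr)$, and hence by the triangle inequality $|f(v)-f(o)|\le\sum_{k=1}^n Df(v_k)$. Writing $S=\sup_{w\in T^*}|w|Df(w)$, so that $\|f\|_{\textbf{w}}=|f(o)|+S$, each term obeys $Df(v_k)=\frac{|v_k|Df(v_k)}{k}\le\frac{S}{k}$, whence $|f(v)-f(o)|\le S\sum_{k=1}^n\frac1k$.

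For part (a) the key elementary input is the harmonic bound $\sum_{k=1}^n\frac1k\le 1+\log n$, obtained by comparing the partial sum with $\int_1^n\frac{dx}{x}$. Combining this with $|f(v)|\le|f(o)|+|f(v)-f(o)|$ gives $|f(v)|\le|f(o)|+S(1+\log n)$. Since $v\in T^*$ forces $n\ge 1$ and thus $1+\log n\ge 1$, I can absorb the bare term $|f(o)|$ into $(1+\log n)|f(o)|$ and conclude $|f(v)|\le(1+\log|v|)(|f(o)|+S)=(1+\log|v|)\|f\|_{\textbf{w}}$, as claimed. For part (b) I would run a Cesàro/Stolz-type splitting. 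Fix $\e>0$; since $f\in\Lip_{\textbf{w},0}$ means $|w|Df(w)\to 0$, choose $N$ so that $|w|Df(w)<\e$ whenever $|w|\ge N$. For $v$ with $|v|=n>N$, split the telescoping sum at $N$: the tail $\sum_{k=N+1}^n Df(v_k)<\e\sum_{k=N+1}^n\frac1k\le\e\log n$, while the head $\sum_{k=1}^N Df(v_k)\le S\sum_{k=1}^N\frac1k$ is a fixed constant. Thus $|f(v)|\le|f(o)|+S\sum_{k=1}^N\frac1k+\e\log n$; dividing by $\log n$ and letting $n\to\infty$ yields $\limsup_{|v|\to\infty}\frac{|f(v)|}{\log|v|}\le\e$, and since $\e$ was arbitrary the limit is $0$.

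The only genuine subtlety, and the step I would watch most carefully, is that the head of the split in part (b) is bounded uniformly in $v$: although the vertices $v_1,\dots,v_N$ depend on which geodesic is traversed, the bound $S\sum_{k=1}^N\frac1k$ does not depend on $v$, which is precisely what lets the $\e\log n$ term govern the asymptotics.
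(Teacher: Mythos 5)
Your proof is correct. Note that the paper does not actually prove this lemma --- it is quoted from Propositions 2.1 and 2.6 of an earlier paper --- so there is no in-text argument to compare against; but your telescoping along the geodesic from $o$ to $v$, with the increment bound $Df(v_k)\le S/k$ and the harmonic-sum estimate $\sum_{k=1}^n \frac1k\le 1+\log n$, is the standard argument and is essentially the same device the present paper uses (in inductive form) to prove its Lemma~\ref{variantnew}. Both parts (a) and (b) are handled soundly, including the one genuine subtlety you flag: the head of the split in (b) is bounded by $S\sum_{k=1}^N\frac1k$, which is independent of $v$, so the tail term $\e\log n$ indeed controls the asymptotics.
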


\begin{lemma}[Proposition 2.7 of \cite{AllenColonnaEasley:10}]\label{weakconv_Lipw} Let $\{f_n\}$ be a sequence of functions in {\rm $\Lip_{{\textbf{w}},0}$} converging to $0$ pointwise in $T$ and such that {\rm $\{\|f_n\|_{\textbf{w}}\}$} is bounded. Then $f_n\to 0$ weakly in {\rm $\Lip_{{\textbf{w}},0}$}.
\end{lemma}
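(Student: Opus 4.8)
The plan is to realize $\Lip_{\textbf{w},0}$ as a closed subspace of a space of type $c_0$ and then reduce the asserted weak convergence to coordinatewise convergence, where it follows from a routine dominated-convergence argument. The guiding observation is that, although the map $f\mapsto Df(v)=|f(v)-f(v^-)|$ is not linear, the ``signed'' difference $f\mapsto f(v)-f(v^-)$ is linear and carries exactly the same norm information.

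Concretely, I would consider the map
$$\Phi(f)=\Bigl(f(o),\,\bigl(|v|\,(f(v)-f(v^-))\bigr)_{v\in T^*}\Bigr).$$
First I would check that $\Phi$ is a linear isometry of $\Lip_{\textbf{w}}$ into the direct sum $\C\oplus\ell^\infty(T^*)$ normed by $|a|+\|(c_v)\|_\infty$, since
$$\sup_{v\in T^*}\bigl|\,|v|\,(f(v)-f(v^-))\bigr|=\sup_{v\in T^*}|v|\,Df(v),$$
so that $\|\Phi(f)\|=\|f\|_{\textbf{w}}$. By the very definition of $\Lip_{\textbf{w},0}$, the second component of $\Phi(f)$ lies in $c_0(T^*)$ precisely when $f\in\Lip_{\textbf{w},0}$; thus $\Phi$ maps $\Lip_{\textbf{w},0}$ isometrically into $Z:=\C\oplus c_0(T^*)$. (In fact $\Phi$ is onto $Z$: given $(a,(c_v))$ one recovers $f$ by setting $f(o)=a$ and propagating $f(v)=f(v^-)+c_v/|v|$ along each geodesic from $o$; but surjectivity is not needed below.)

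Next I would invoke the standard duality $Z^*=\C\oplus\ell^1(T^*)$, so that every $\Lambda\in(\Lip_{\textbf{w},0})^*$ is induced, via $\Phi$, by a pair $(\mu,(\lambda_v))$ with $(\lambda_v)\in\ell^1(T^*)$; equivalently, one may extend any $\Lambda$ to $Z$ by Hahn--Banach and use that weak convergence passes to closed subspaces. Writing $z_n=\Phi(f_n)$, the hypothesis $\|f_n\|_{\textbf{w}}\le M$ gives $\|z_n\|\le M$, while pointwise convergence $f_n\to 0$ forces every coordinate of $z_n$ to tend to $0$: indeed $f_n(o)\to 0$ and, for each fixed $v\in T^*$, $|v|\,(f_n(v)-f_n(v^-))\to 0$. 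Then
$$\Lambda(f_n)=\mu\,(z_n)_o+\sum_{v\in T^*}\lambda_v\,(z_n)_v,$$
and since $|\lambda_v(z_n)_v|\le M|\lambda_v|$ with $(\lambda_v)\in\ell^1(T^*)$, dominated convergence yields $\Lambda(f_n)\to 0$. As $\Lambda$ was arbitrary, $f_n\to 0$ weakly.

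The only genuine obstacle is the first step: linearizing the modulus in $Df$ without losing the isometry, which is what makes the $c_0$-model available. Once that identification is in place, the argument reduces to the familiar fact that in $c_0$ a bounded, coordinatewise-null sequence is weakly null, and no estimate specific to the tree is required; this is also why the proof parallels that of Lemma \ref{weakconv_Lip} for $\Lip_0$.
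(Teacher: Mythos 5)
Your proof is correct, and there is a preliminary point to make about the comparison: the paper does not prove Lemma~\ref{weakconv_Lipw} at all, but imports it as Proposition 2.7 of \cite{AllenColonnaEasley:10}, so your argument stands as a self-contained substitute rather than a variant of an in-paper proof. As for the argument itself: the linearization $\Phi(f)=\bigl(f(o),\,(|v|(f(v)-f(v^-)))_{v\in T^*}\bigr)$ is indeed a linear isometry of $\Lip_{\textbf{w}}$ into $\C\oplus\ell^\infty(T^*)$; boundedness of $\{\|f_n\|_{\textbf{w}}\}$ and pointwise convergence translate exactly into a uniform bound and coordinatewise null convergence for $z_n=\Phi(f_n)$; and the Hahn--Banach extension together with the duality $(c_0)^*=\ell^1$ and the dominated-convergence step legitimately yield $\Lambda(f_n)\to 0$ for every $\Lambda$, with no need for the (true, but optional) surjectivity of $\Phi$. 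One caveat: your claim that the second component of $\Phi(f)$ lies in $c_0(T^*)$ ``by the very definition'' of $\Lip_{{\textbf{w}},0}$, and your closing remark that no fact specific to the tree is required, both quietly use local finiteness. The defining condition $\lim_{|v|\to\infty}|v|Df(v)=0$ is equivalent to $c_0$-membership (i.e., $\{v:|v|Df(v)\ge\e\}$ finite for every $\e>0$) only because each ball $\{v:|v|\le N\}$ is a finite set; on a graph with infinite spheres the two conditions differ. Since the paper's standing hypothesis is that $T$ is locally finite, this is a presentational omission, not a mathematical gap. Finally, your approach buys more than the lemma: with the surjectivity you sketch, $\Phi$ is an isometric isomorphism of $\Lip_{{\textbf{w}},0}$ onto $\C\oplus c_0(T^*)$ (and of $\Lip_{\textbf{w}}$ onto $\C\oplus\ell^\infty(T^*)$), which immediately gives the separability of $\Lip_{{\textbf{w}},0}$ mentioned in Section 1, and the same map with the weight $|v|$ replaced by $1$ proves Lemma~\ref{weakconv_Lip} as well.
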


In this paper, we consider the multiplication operators between $\Lip$ and $\Lip_{\textbf{w}}$, as well as between $\Lip_{\textbf{w}}$ and $L^\infty$.  The multiplication operators between $\Lip$ and $L^\infty$ were studied by the last two authors in \cite{ColonnaEasley:10-II}.  %We consider issues of boundedness, compactness, isometries, operator norm and essential norm estimates for these operators. %In particular, we characterize the bounded and the compact multiplication operators, and show there are no isometries between these spaces amongst them.

\subsection{Organization of the paper}

In Sections 2 and 3, we study the multiplication operators between $\Lip_{{\textbf{w}}}$  and $\Lip$. We characterize the bounded and the compact operators, and give estimates on their operator norm and their essential norm. We also prove that no isometric multiplication operators exist between the respective spaces.

In Section 4, we characterize the bounded operators and the compact operators from $\Lip_{{\textbf{w}}}$ to $L^\infty$ and determine their operator norm and their essential norm. As was the case in Sections~2 and 3, we show that no isometries exist amongst such operators. In addition, we characterize the multiplication operators that are bounded from below.

Finally, in Section~5, we characterize the bounded and the compact multiplication operators from $L^\infty$ to $\Lip_{{\textbf{w}}}$. We also determine their operator norm and their essential norm. As with all the other cases, we show that there are no isometries amongst such operators.

%--------------------------------------------------------------------------------------
\section{Multiplication operators from $\Lip_{\textbf{w}}$ to $\Lip$}

We begin the section with the study the bounded multiplication operators $M_\psi : \Lip_{\textbf{w}} \to \Lip$ and $M_\psi : \Lip_{\textbf{w},0} \to \Lip_0$. % We first characterize the bounded operators and establish estimates on the operator norm.  From this, we show there are no isometric multiplication operators from $\Lip_{\textbf{w}}$ or $\Lip_{\textbf{w},0}$ into $\Lip$ or $\Lip_0$.  Finally, we characterize the compact multiplication operators and establish estimates on the essential norm.

\subsection{Boundedness and Operator Norm Estimates}
Let $\psi$ be a function on the tree $T$.  Define
$$\begin{aligned}
\t_\psi &= \sup_{v\in T^*}D\psi(v)\log(1+|v|),\\
\s_\psi &= \sup_{v\in T}\frac{|\psi(v)|}{|v|+1}.
\end{aligned}$$
In the following theorem, we give a boundedness criterion in terms of the quantities $\t_\psi$ and $\s_\psi$.

\begin{theorem}\label{lipmtolip} For a function $\psi$ on $T$, the following statements are equivalent:
\begin{enumerate}
\item[{\rm{(a)}}] {\rm $M_\psi:\Lip_{\textbf{w}}\to \Lip$} is bounded.
\item[{\rm{(b)}}] {\rm $M_\psi:\Lip_{{\textbf{w}},0}\to \Lip_0$} is bounded.
\item[{\rm{(c)}}] $\t_\psi$ and $\s_\psi$ are finite.
\end{enumerate}
Furthermore, under these conditions, we have
$$\max\{\t_\psi,\s_\psi\}\le \|M_\psi\|\le \t_\psi+\s_\psi.$$
\end{theorem}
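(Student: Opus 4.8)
The plan is to reduce everything to a discrete product rule for the difference operator $D$ and then to feed in the two growth estimates available for $\Lip_{\textbf{w}}$. Writing $\psi(v)f(v)-\psi(v^-)f(v^-)=\bigl(\psi(v)-\psi(v^-)\bigr)f(v)+\psi(v^-)\bigl(f(v)-f(v^-)\bigr)$ yields the pointwise inequality $D(\psi f)(v)\le D\psi(v)\,|f(v)|+|\psi(v^-)|\,Df(v)$, which is the engine for every estimate below. For the implication (c)$\Rightarrow$(a) and the upper norm bound, I would fix $f\in\Lip_{\textbf{w}}$ and bound the two summands separately. The second is clean: since $|\psi(v^-)|\le\s_\psi(|v^-|+1)=\s_\psi|v|$ and $|v|\,Df(v)\le\|f\|_{\textbf{w}}$, it contributes at most $\s_\psi\|f\|_{\textbf{w}}$. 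For the first I would invoke Lemma~\ref{new}(a), namely $|f(v)|\le(1+\log|v|)\|f\|_{\textbf{w}}$, so that $D\psi(v)\,|f(v)|$ is controlled by the logarithmically weighted quantity defining $\t_\psi$. Together with $|\psi(o)f(o)|\le\s_\psi|f(o)|$ at the root, this should give $\|M_\psi f\|_\Lip\le(\t_\psi+\s_\psi)\|f\|_{\textbf{w}}$, proving boundedness and the right-hand inequality simultaneously.

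For (c)$\Rightarrow$(b) the same estimate yields boundedness on the little spaces, but one must additionally verify that $M_\psi$ maps $\Lip_{\textbf{w},0}$ into $\Lip_0$, i.e. that $D(\psi f)(v)\to0$. Here I would again split: the term $|\psi(v^-)|\,Df(v)\le\s_\psi\,|v|\,Df(v)\to0$ because $f\in\Lip_{\textbf{w},0}$, while for $D\psi(v)\,|f(v)|$ I would write it as $\t_\psi\,|f(v)|/\log(1+|v|)$ and apply Lemma~\ref{new}(b), which forces $|f(v)|/\log|v|\to0$.

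The reverse implications (a)$\Rightarrow$(c) and (b)$\Rightarrow$(c), together with the lower bound $\max\{\t_\psi,\s_\psi\}\le\|M_\psi\|$, I would obtain by testing $M_\psi$ against explicit normalized functions, each chosen to lie in $\Lip_{\textbf{w},0}$ so that a single construction serves both (a) and (b). To capture $\s_\psi$, for a fixed $w\in T^*$ I would take $f$ equal to $1/(|w|+1)$ at $w$ and $0$ elsewhere; then $\|f\|_{\textbf{w}}=1$ and $\|M_\psi f\|_\Lip=|\psi(w)|/(|w|+1)$, with the root handled separately by the indicator of $o$. To capture $\t_\psi$, I would use the truncated harmonic function $f(v)=\sum_{j=1}^{\min(|v|,\,|w|-1)}1/j$, which has weighted norm $1$ and is constant across the edge at $w$, hence produces $D(\psi f)(w)=f(w)\,D\psi(w)$ with $f(w)=\sum_{j<|w|}1/j$.

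I expect the delicate point to be precisely this $\t_\psi$ lower bound: the value $f(w)=\sum_{j<|w|}1/j$ is a harmonic partial sum rather than exactly $\log(1+|w|)$, so pinning down the weight appearing in $\t_\psi$ requires checking $\sum_{j<|w|}1/j\ge\log(1+|w|)$ for all but the smallest levels (the finitely many exceptional levels being disposed of by a direct argument). This reconciliation of the harmonic growth of $\Lip_{\textbf{w}}$ functions with the logarithmic weight in $\t_\psi$ is the only genuinely nonroutine estimate; everything else is bookkeeping with the product rule and Lemma~\ref{new}.
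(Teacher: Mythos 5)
Your proposal follows the paper's proof in most of its mechanics: the product rule for $D$, the use of Lemma~\ref{new}(a) and (b) in (c)$\Rightarrow$(a) and (c)$\Rightarrow$(b), and the test functions $\frac1{|w|+1}\chi_w$ and $\frac12\chi_o$ for $\sigma_\psi$ are exactly the paper's. The genuine difference is your truncated harmonic profile $f(u)=H_{\min(|u|,|w|-1)}$, where $H_m=\sum_{j=1}^{m}1/j$, in place of the paper's logarithmic ones (the untruncated $\log(1+|\cdot|)$ combined with the reversed product rule for (a)$\Rightarrow$(c), which gives $\tau_\psi\le\|M_\psi\|+\sigma_\psi$ and hence finiteness; the powers $(\log(1+|\cdot|))^{\alpha}$ with $\alpha\uparrow1$ for (b)$\Rightarrow$(c); and the truncated logarithm $g_w=\log(1+\min(|\cdot|,|w|))$ for the sharp lower bound). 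Your profile buys something real: it lies in $\Lip_{{\textbf{w}},0}$ outright, so a single construction serves both (a) and (b) with no limiting argument in $\alpha$, and since $H_{|w|-1}\ge\log|w|$ it does prove $\tau_\psi<\infty$ (levels $1$ and $2$ are harmless for \emph{finiteness}, since $D\psi(w)\le\|\psi\|_{\Lip}=\|M_\psi 1\|_{\Lip}\le\|M_\psi\|$). So the equivalence of (a), (b), (c) goes through on your route.

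The gap is the estimate $\|M_\psi\|\ge\tau_\psi$. You rightly flag that $H_{|w|-1}\ge\log(1+|w|)$ fails at $|w|\in\{1,2\}$, but the promised ``direct argument'' exists only at $|w|=1$ (use $f\equiv1$); at $|w|=2$ no such argument can exist, because the inequality being proved is false there. Indeed, for $|w|=2$ and $\|f\|_{\textbf{w}}\le1$, writing $r=|f(o)|$ and $s=\|f\|_{\textbf{w}}-r$, one has $|f(w^-)|\le r+s\le 1$ and $Df(w)\le s/2$, so
$$D(\psi f)(w)\le D\psi(w)\,|f(w^-)|+|\psi(w)|\,Df(w)\le D\psi(w)+\frac{1}{2}|\psi(w)|,$$
a cap that no admissible test function can beat. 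This yields a counterexample to the stated lower bound itself: fix $u\sim o$ and a child $v$ of $u$, and let $\psi=\frac1{10}\chi_{S_v}-\frac9{10}\chi_{S_u\setminus S_v}$. Then $D\psi$ is supported on $\{u,v\}$ with $D\psi(u)=\frac9{10}$, $D\psi(v)=1$, so $\max\{\tau_\psi,\sigma_\psi\}=\tau_\psi=\log 3\approx 1.099$; yet for every $f$ with $\|f\|_{\textbf{w}}\le1$ a check of all vertices gives $\|\psi f\|_{\Lip}\le(r+s)+\frac{s}{20}\le\frac{21}{20}$, with equality for $f=\chi_{S_u\setminus S_v}+\frac32\chi_{S_v}$. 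Hence $\|M_\psi\|=\frac{21}{20}<\tau_\psi$, and this part of your plan cannot be completed.

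You are in good company: the paper's own proof of this estimate breaks at the same place. Its displayed formula $D(\psi g_v)(w)=D\psi(w)\log(1+|v|)$ for $|w|\ge|v|$ is incorrect at $|w|=|v|$, where $g_v(w^-)=\log|v|$ rather than $\log(1+|v|)$, so taking $w=v$ there is not justified. What either construction actually proves---yours, or the paper's $g_v$ applied with $|v|=|w|-1$---is $\|M_\psi\|\ge\sup_{w\in T^*}D\psi(w)\log|w|$. That weaker bound supports all the qualitative assertions of the theorem, but the displayed estimate $\max\{\tau_\psi,\sigma_\psi\}\le\|M_\psi\|$ is not salvageable, as the example above shows; it must be weakened, not re-proved.
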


\begin{proof} $(a)\Longrightarrow (c)$ Assume $M_\psi:\Lip_{\textbf{w}}\to \Lip$ is bounded. Applying $M_\psi$ to the constant function 1, we have $\psi\in\Lip$, so that, by Lemma~\ref{old}, we have $\s_\psi<\infty$. Next, consider the function $f$ on $T$ defined by $f(v)=\log(1+|v|)$. Then $f(o)=0$; for $v\ne o$, a straightforward calculation shows that
$$|v|Df(v)=|v|(\log(1+|v|)-\log|v|)\le 1$$
and $\lim\limits_{|v|\to\infty}|v|Df(v)=1$. Thus, $\|f\|_{\textbf{w}}=1$ and so $\|M_\psi f\|_\Lip\le \|M_\psi\|$. Therefore, for $v\in T^*$, noting that
$$D(\psi f)(v)=D\psi(v)f(v)+\psi(v^-)Df(v),$$ we have
\ben D\psi(v)|f(v)|&\le& D(\psi f)(v)+|\psi(v^-)|Df(v)\nonumber\\
&\le &\|M_\psi f\|_\Lip+\s_\psi|v|Df(v)\le \|M_\psi\|+\s_\psi.\nonumber\eeqn
Hence $\t_\psi<\infty.$

$(c)\Longrightarrow (a)$ Assume $\t_\psi$ and $\s_\psi$ are finite. Then, by Lemma~\ref{new}, for $f\in\Lip_{\textbf{w}}$ and $v\in T^*$, we have
\ben D(\psi f)(v)&\le& D\psi (v)|f(v)|+|\psi(v^-)|Df(v)\nonumber\\
&\le & D\psi(v)(1+\log|v|)\|f\|_{\textbf{w}}+|v|\s_\psi Df(v)\nonumber\\
&\le &\t_\psi\|f\|_{\textbf{w}}+\s_\psi(\|f\|_{\textbf{w}}-|f(o)|).\nonumber\eeqn
Thus, since $|\psi(o)|\le \s_\psi$, we obtain  \ben \|M_\psi f\|_\Lip&\le &|\psi(o)||f(o)|+\t_\psi\|f\|_{\textbf{w}}+\s_\psi(\|f\|_{\textbf{w}}-|f(o)|)\nonumber\\
&=&(\t_\psi+\s_\psi)\|f\|_{\textbf{w}}+(|\psi(o)|-\s_\psi)|f(o)|\nonumber\\
&\le &\left(\t_\psi+\s_\psi\right)\|f\|_{\textbf{w}},\nonumber\eeqn
proving the boundedness of $M_\psi:\Lip_{\textbf{w}}\to \Lip$ and the upper estimate.

$(b)\Longrightarrow (c)$ Suppose $M_\psi:\Lip_{{\textbf{w}},0}\to\Lip_0$ is bounded. The finiteness of $\s_\psi$ follows again from the fact that $\psi=M_\psi 1\in\Lip_0$ and from Lemma~\ref{old}. To prove that $\t_\psi<\infty$, let $0<\a<1$ and, for $v\in T$, define $f_\a(v)=(\log(1+|v|))^\a$. Then $f_\a(o)=0$ and $|v|Df_a(v)\to 0$ as $|v|\to\infty$; so $f_\a\in \Lip_{{\textbf{w}},0}$. Since for $0<\a<1$, the function $x\mapsto x-x^\a$ is increasing for $x\ge 1$, the function $Df_\a(v)$ is increasing in $\a$ and $Df_\a(v)\le Df(v)$ for $v\in T^*$, where $f(v)=\log(1+|v|)$, for $v\in T$. Thus, $\|f_\a\|_{\textbf{w}}\le\|f\|_{\textbf{w}}= 1$. Therefore, for $v\in T^*$, we have
\ben D\psi(v)|f_\a(v)|&\le &D(\psi f_\a)(v)+|\psi(v^-)|Df_\a(v)\nonumber\\
&\le &\|M_\psi f_\a\|+\s_\psi|v|D f_\a(v)\le \|M_\psi\|+\s_\psi.\nonumber\eeqn
Letting $\a\to 1$, we obtain
$$D\psi(v)\log(1+|v|)\le \|M_\psi\|+\s_\psi.$$ Hence  $\t_\psi<\infty$.

$(c)\Longrightarrow (b)$ Assume $\sigma_\psi$ and $\tau_\psi$ are finite, and let $f\in\Lip_{{\textbf{w}},0}$. Then, by Lemma~\ref{new}, for $v\in T^*$, we have
\ben D(\psi f)(v)&\le &D\psi(v)|f(v)|+|\psi(v^-)|Df(v)\nonumber\\
&\le &D\psi(v)\log(1+|v|)\frac{|f(v)|}{\log(1+|v|)}+\frac{|\psi(v^-)|}{|v|}|v|Df(v)\nonumber\\
&\le&\t_\psi\frac{|f(v)|}{\log(1+|v|)}+\s_\psi|v|Df(v)\to 0\nonumber\eeqn
as $|v|\to\infty$. Thus, $\psi f\in\Lip_0.$ The boundedness of $M_\psi$ and the estimate $\|M_\psi f\|_\Lip\le \t_\psi+\s_\psi$ can be shown as in the proof of $(c)\Longrightarrow (a)$.

Finally we show that, under boundedness assumptions on $M_\psi$, $\|M_\psi\|\ge \max\{\t_\psi,\s_\psi\}$.  For $v\in T^*$, let $f_v=\frac1{|v|+1}\chi_v$, where $\chi_v$ denotes the characteristic function of $\{v\}$. Then $\|f_v\|_{\textbf{w}}=1$ and $$\|\psi f_v\|_\Lip=\frac{|\psi(v)|}{|v|+1}.$$ Furthermore, letting $f_o=\frac12\chi_o$, we see that $\|f_o\|_{\textbf{w}}=1$ and $\|\psi f_o\|_\Lip=|\psi(o)|$. Therefore, we deduce that $\|M_\psi\|\ge \s_\psi.$

Next, fix $v\in T^*$ and for $w\in T$, define
$$g_v(w)=\begin{cases} \log(1+|w|) &\hbox{ if }|w|<|v|,\\
\log(1+|v|) &\hbox{ if }|w|\ge |v|.\end{cases}$$
Then, $g_v\in \Lip_{\textbf{w}}$ and $$\lim_{|v|\to\infty}\|g_v\|_{\textbf{w}}=\lim_{|v|\to\infty}|v|\left[\log(1+|v|)-\log|v|\right]=1.$$
Observe that for $w\in T^*$, we have
$$D(\psi g_v)(w)=\begin{cases} |\psi(w)\log(1+|w|)-\psi(w^-)\log|w|| &\hbox{ if }|w|<|v|,\\
D\psi(w)\log(1+|v|)&\hbox{ if }|w|\ge |v|.\end{cases}$$
Hence $$\sup_{w\in T^*}D(\psi g_v)(w)\ge \sup_{|w|\ge |v|}D\psi(w)\log(1+|v|)\ge D\psi(v)\log(1+|v|).$$
Define $f_v=\frac{g_v}{\|g_v\|_{\textbf{w}}}$. Then $\|f_v\|_{\textbf{w}}=1$ and
\ben \|M_\psi\|\ge \|M_\psi f_v\|_\Lip= \frac{\|D(\psi g_v)\|_\infty}{\|g_v\|_{\textbf{w}}}\ge \frac{D\psi(v)\log(1+|v|)}{\|g_v\|_{\textbf{w}}}.\nonumber\eeqn
Taking the limit as $|v|\to\infty$, we obtain $\|M_\psi\|\ge \t_\psi.$
Therefore, $\|M_\psi\|\ge \max\{\t_\psi,\s_\psi\}.$
\end{proof}

\subsection{Isometries}

In this section, we show there are no isometric multiplication operators $M_\psi$ from the spaces $\Lip_{\textbf{w}}$ or $\Lip_{\textbf{w},0}$ to the spaces $\Lip$ or $\Lip_0$.

Assume $M_\psi:\Lip_{\textbf{w}}\to\Lip$ is an isometry. Then
$\|\psi\|_\Lip=\|M_\psi 1\|_\Lip=1.$ On the other hand, $|\psi(o)|=\frac12\left\|M_\psi\chi_o\right\|_\Lip=\frac12\left\|\chi_o\right\|_{\textbf{w}}=1.$
Thus $\sup\limits_{v\in T^*}D\psi(v)=\|\psi\|_\Lip-|\psi(o)|=0$, which implies that $\psi$ is a constant of modulus 1. Yet, for $v\in T^*$, letting $f_v=\frac1{|v|+1}\chi_v$, we see that $$1=\|f_v\|_{\textbf{w}}=\|M_\psi f_v\|_\Lip=\frac1{|v|+1},$$
which yields a contradiction. Therefore, we obtain the following result.

\begin{theorem}\label{noiso2} There are no isometries $M_\psi$ from {\rm $\Lip_{\textbf{w}}$} to $\Lip$ or {\rm $\Lip_{{\textbf{w}},0}$} to $\Lip_0$.
\end{theorem}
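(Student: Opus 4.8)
The plan is to assume $M_\psi$ is an isometry and use a small battery of explicit test functions to force $\psi$ to be a unimodular constant, and then exhibit a single function on which no such $M_\psi$ can preserve norms.

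First I would apply the isometry to the constant function $1$. Since $\|1\|_{\textbf{w}}=1$ and $M_\psi 1=\psi$, this gives $\|\psi\|_\Lip=1$, i.e. $|\psi(o)|+\sup_{v\in T^*}D\psi(v)=1$. Next I would test on the root characteristic function $\chi_o$: a direct computation gives $\|\chi_o\|_{\textbf{w}}=2$, while $\psi\chi_o$ takes the value $\psi(o)$ at $o$ and $0$ elsewhere, so $\|\psi\chi_o\|_\Lip=2|\psi(o)|$. The isometry then forces $|\psi(o)|=1$.

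Combining the two relations yields $\sup_{v\in T^*}D\psi(v)=\|\psi\|_\Lip-|\psi(o)|=0$, so $D\psi\equiv 0$ and $\psi$ is a constant $c$ with $|c|=1$. This step is really the heart of the matter: the two normalizations together leave the symbol no freedom whatsoever, collapsing it to a single unimodular constant. With $\psi$ constant, I would finish using the scaled point masses $f_v=\frac{1}{|v|+1}\chi_v$ from the proof of Theorem~\ref{lipmtolip}, for which $\|f_v\|_{\textbf{w}}=1$. Because $M_\psi f_v=cf_v$ with $|c|=1$, we get $\|M_\psi f_v\|_\Lip=\|f_v\|_\Lip=\frac{1}{|v|+1}$, which is strictly smaller than $1=\|f_v\|_{\textbf{w}}$ for every $v\in T^*$; this contradicts the isometry assumption and rules out $M_\psi:\Lip_{\textbf{w}}\to\Lip$.

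For the little-space statement I would simply observe that $1$, $\chi_o$, and each $\chi_v$ all lie in $\Lip_{\textbf{w},0}$ and that multiplication by a constant sends them into $\Lip_0$, so the identical chain of equalities applies verbatim. I do not anticipate a genuine obstacle here; the only thing requiring care is the bookkeeping of the two different norms on the finitely supported test functions, and the recognition that forcing $D\psi\equiv 0$ is precisely what makes the point-mass comparison break the isometry.
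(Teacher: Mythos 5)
Your proof is correct and follows essentially the same route as the paper's: testing $M_\psi$ on the constant function $1$ and on $\chi_o$ to force $\psi$ to be a unimodular constant, then using the scaled point masses $f_v=\frac{1}{|v|+1}\chi_v$ to reach the contradiction $\frac{1}{|v|+1}=1$. Your explicit remark that all test functions lie in $\Lip_{\textbf{w},0}$ (so the argument covers the little-space case verbatim) is a detail the paper leaves implicit, but it is the same argument.
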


%%%%%%%%%%%%%%%%%%%%%%%%%%%%%%%%%%%%%%%%%%%%%%%%%%%%%%%%%%%%%%%%%%%%%%%%%%%%%%%%%%%%%%%%%%%%%5
\subsection{Compactness and Essential Norm Estimates}
In this section, we characterize the compact multiplication operators.  As with many classical spaces, the characterization of the compact operators is a ``little-oh" condition corresponding the the ``big-oh" condition for boundedness.  We first collect some useful results about compact operators from $\Lip_{\textbf{w}}$ or from $\Lip_{\textbf{w},0}$ to $\Lip$.

\begin{lemma}\label{compact lemma section 2} A bounded multiplication operator $M_\psi$ from {\rm $\mathcal{L}_{\textbf{w}}$} to $\mathcal{L}$ is compact if and only if for every bounded sequence $\{f_n\}$ in $\mathcal{L}_w$ converging to 0 pointwise, the sequence $\{\|\psi f_n\|_\mathcal{L}\} \to 0$ as $n \to \infty$.
\end{lemma}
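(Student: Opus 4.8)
The plan is to prove both implications directly, relying on the fact that point evaluations are bounded linear functionals on $\Lip$. Indeed, Lemma~\ref{old}(a) gives $|f(v)| \le |f(o)| + |v|\,\|Df\|_\infty$, so convergence in the $\Lip$-norm forces pointwise convergence; this single observation drives both directions and I would record it first.

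For necessity, suppose $M_\psi$ is compact and let $\{f_n\}$ be a bounded sequence in $\Lip_{\textbf{w}}$ with $f_n \to 0$ pointwise. If $\|\psi f_n\|_\Lip \not\to 0$, I would pass to a subsequence with $\|\psi f_{n_k}\|_\Lip \ge \e$ for some $\e > 0$. By compactness, $\{M_\psi f_{n_k}\}$ has a further subsequence converging in $\Lip$ to some $g$; by the point-evaluation remark this subsequence also converges to $g$ pointwise. But $\psi f_{n_k} \to 0$ pointwise (as $\psi$ is fixed and $f_n \to 0$ pointwise), forcing $g = 0$ and hence $\|\psi f_{n_k}\|_\Lip \to 0$ along the sub-subsequence, contradicting the lower bound.

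For sufficiency, assume the stated sequential condition and take any sequence $\{g_n\}$ in $\Lip_{\textbf{w}}$ with $\|g_n\|_{\textbf{w}} \le M$. Lemma~\ref{new}(a) bounds $|g_n(v)|$ uniformly in $n$ at each fixed vertex, so a diagonalization over the countable vertex set produces a subsequence $\{g_{n_k}\}$ converging pointwise to some function $g$. Since $|v|\,Dg_{n_k}(v) \le M$ for every $v \in T^*$, passing to the limit gives $|v|\,Dg(v) \le M$, so $g \in \Lip_{\textbf{w}}$ and, by boundedness of $M_\psi$, $M_\psi g \in \Lip$. The differences $h_k = g_{n_k} - g$ then form a bounded sequence in $\Lip_{\textbf{w}}$ converging to $0$ pointwise, so the hypothesis yields $\|\psi h_k\|_\Lip = \|M_\psi g_{n_k} - M_\psi g\|_\Lip \to 0$. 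Thus every bounded sequence has a subsequence whose image converges in $\Lip$, which is exactly compactness of $M_\psi$.

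I expect the main technical point to be the sufficiency direction: extracting a pointwise-convergent subsequence via the normal-families/diagonal argument and verifying that the limit again lies in $\Lip_{\textbf{w}}$ with a controlled norm, so that $M_\psi g$ is defined and the hypothesis can legitimately be applied to the difference sequence $h_k$. The necessity direction is routine once continuity of point evaluations is in hand.
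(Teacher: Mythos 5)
Your proposal is correct and follows essentially the same route as the paper: necessity via the fact that $\Lip$-norm convergence implies pointwise convergence (Lemma~\ref{old}(a)) plus uniqueness of the limit, and sufficiency via a diagonal extraction of a pointwise limit $g$, verification that $g\in\Lip_{\textbf{w}}$, and application of the hypothesis to the difference sequence $g_{n_k}-g$. The only cosmetic differences are that you phrase necessity as a contradiction (which in fact makes explicit the paper's brief ``0 is the only limit point'' step) and pass to the limit in $|v|\,Dg_{n_k}(v)\le M$ directly rather than via the paper's $\varepsilon$-argument.
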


\begin{proof} Assume $M_\psi$ is compact, and let $\{f_n\}$ be a bounded sequence in $\mathcal{L}_{\textbf{w}}$ converging to 0 pointwise.  Without loss of generality, we may assume $\|f_n\|_{\textbf{w}} \leq 1$ for all $n \in \N$.  Then the sequence $\{M_\psi f_n\} = \{\psi f_n\}$ has a subsequence $\{\psi f_{n_k}\}$ which converges in the $\mathcal{L}$-norm to some function $f \in \mathcal{L}$.  Clearly $\psi(o)f_{n_k}(o) \to \psi(o)f(o)$, and by part (a) of Lemma~\ref{old}, for $v \in T^*$, we have
$$\begin{aligned}
|\psi(v)f_{n_k}(v) - f(v)| &\leq |\psi(o)f_{n_k}(o) - f(o)| + |v|\|D(\psi f_{n_k} - f)\|_\infty\\
&\leq (1+|v|)\|\psi f_{n_k} - f\|_\mathcal{L}.
\end{aligned}$$  Thus, $\psi f_{n_k} \to f$ pointwise on $T$.  Since $f_n \to 0$ pointwise, it follows that $f$ must be identically 0, which implies that $\|\psi f_{n_k}\|_\mathcal{L} \to 0$.  With 0 being the only limit point of $\{\psi f_n\}$ in $\mathcal{L}$, it follows that $\|\psi f_n\|_\Lip\to 0$ as $n \to \infty$.

Conversely, assume every bounded sequence $\{f_n\}$ in $\mathcal{L}_{\textbf{w}}$ converging to 0 pointwise has the property that $\|\psi f_n\|_\mathcal{L} \to 0$ as $n \to \infty$.  Let $\{g_n\}$ be a sequence in $\mathcal{L}_{\textbf{w}}$ with $\|g_n\|_{\textbf{w}} \leq 1$ for all $n \in \N$.  Then $|g_n(o)| \leq 1$ for all $n \in \N$, and by part (a) of Lemma 1.2, for $v \in T^*$, we obtain $$|g_n(v)| \leq (1+\log|v|)\|g_n\|_{\textbf{w}} \leq 1+\log|v|.$$  Thus, $\{g_n\}$ is uniformly bounded on finite subsets of $T$.  So some subsequence $\{g_{n_k}\}$ converges pointwise to some function $g$. Fix $v\in T^*$ and $\varepsilon > 0$. Then for $k$ sufficiently large, we have
$$|g(v)-g_{n_k}(v)| < \frac{\varepsilon}{2|v|}, \text{ and } |g_{n_k}(v^-)-g(v^-)| < \frac{\varepsilon}{2|v|}.$$ We deduce
$$\begin{aligned}
|v|Dg(v) &\leq |v||g(v) - g_{n_k}(v) + g_{n_k}(v^-) - g(v^-)| + |v|Dg_{n_k}(v)\\
&\leq  |v||g(v)-g_{n_k}(v)| + |v||g_{n_k}(v^-)-g(v^-)| + |v|Dg_{n_k}(v)\\
&< \varepsilon + |v|Dg_{n_k}(v)\le \varepsilon + 1,
\end{aligned}$$ for all $k$ sufficiently large. So $g \in \mathcal{L}_{\textbf{w}}$.  The sequence defined by $f_k = g_{n_k}-g$ is bounded in $\mathcal{L}_{\textbf{w}}$ and converges to 0 pointwise.  Thus by hypothesis, we obtain $\|\psi f_k\|_\mathcal{L} \to 0$ as $k \to \infty$.  It follows that $M_\psi g_{n_k} = \psi g_{n_k} \to \psi g$ in the $\mathcal{L}$-norm, thus proving the compactness of $M_\psi$.\;\end{proof}

By an analogous argument, we obtain the corresponding compactness criterion for $M_\psi$ from {\rm $\mathcal{L}_{{\textbf{w}},0}$} to $\mathcal{L}_0$.

\begin{lemma}\label{compact lemma section 2_0} A bounded multiplication operator $M_\psi$ from {\rm $\mathcal{L}_{{\textbf{w}},0}$} to $\mathcal{L}_0$ is compact if and only if for every bounded sequence $\{f_n\}$ in {\rm $\mathcal{L}_{{\textbf{w}},0}$} converging to 0 pointwise, the sequence $\{\|\psi f_n\|_\mathcal{L}\} \to 0$ as $n \to \infty$.
\end{lemma}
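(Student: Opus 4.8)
The plan is to follow the proof of Lemma~\ref{compact lemma section 2} line by line, changing only the one step that the little-oh setting genuinely forces me to change. For the forward implication I would assume $M_\psi:\mathcal{L}_{{\textbf{w}},0}\to\mathcal{L}_0$ is compact and take a bounded sequence $\{f_n\}\subset\mathcal{L}_{{\textbf{w}},0}$ with $\|f_n\|_{\textbf{w}}\le 1$ and $f_n\to 0$ pointwise. Compactness yields a subsequence $\{\psi f_{n_k}\}$ converging in the $\mathcal{L}$-norm to some $f$, and since $\mathcal{L}_0$ is closed in $\mathcal{L}$ we have $f\in\mathcal{L}_0$. Applying part (a) of Lemma~\ref{old} to $\psi f_{n_k}-f\in\mathcal{L}$ gives $|\psi(v)f_{n_k}(v)-f(v)|\le(1+|v|)\|\psi f_{n_k}-f\|_\mathcal{L}$, so $\psi f_{n_k}\to f$ pointwise; comparing with $f_n\to 0$ pointwise forces $f\equiv 0$. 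As $0$ is then the only possible $\mathcal{L}$-limit point of $\{\psi f_n\}$, the full sequence satisfies $\|\psi f_n\|_\mathcal{L}\to 0$. This reproduces the forward half of Lemma~\ref{compact lemma section 2} essentially verbatim.

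For the reverse implication I would start identically: given $\{g_n\}\subset\mathcal{L}_{{\textbf{w}},0}$ with $\|g_n\|_{\textbf{w}}\le 1$, the bound $|g_n(v)|\le(1+\log|v|)\|g_n\|_{\textbf{w}}\le 1+\log|v|$ from Lemma~\ref{new}(a), together with $|g_n(o)|\le 1$, shows that $\{g_n\}$ is uniformly bounded on finite subsets of $T$, so a diagonal extraction produces a subsequence $\{g_{n_k}\}$ converging pointwise to some $g$. Here, though, the proof of Lemma~\ref{compact lemma section 2} cannot simply be copied: there one checks that the pointwise limit lies in $\mathcal{L}_{\textbf{w}}$, sets $f_k=g_{n_k}-g$, and feeds $\{f_k\}$ into the hypothesis. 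In the present setting $g$ need only belong to $\mathcal{L}_{\textbf{w}}$ and may well fail to lie in $\mathcal{L}_{{\textbf{w}},0}$ — for instance the functions $g_N=\min\{\log(1+|v|),\log(1+N)\}$ all lie in $\mathcal{L}_{{\textbf{w}},0}$ with $\|g_N\|_{\textbf{w}}\le 1$, yet converge pointwise to $\log(1+|v|)\notin\mathcal{L}_{{\textbf{w}},0}$ — so $f_k=g_{n_k}-g$ would leave $\mathcal{L}_{{\textbf{w}},0}$ and the hypothesis could no longer be invoked. This is the one genuine obstacle, and the only place the two proofs diverge.

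To get around it I would never subtract the limit. Since $\mathcal{L}_0$ is complete it suffices to show that $\{\psi g_{n_k}\}$ is $\mathcal{L}$-Cauchy, and I would argue this by contradiction: if it were not, there would be $\varepsilon_0>0$ and indices $k_i,j_i\to\infty$ with $\|\psi(g_{n_{k_i}}-g_{n_{j_i}})\|_\mathcal{L}\ge\varepsilon_0$. The decisive observation is that $f_i:=g_{n_{k_i}}-g_{n_{j_i}}$ is a difference of elements of the vector space $\mathcal{L}_{{\textbf{w}},0}$ and hence again lies in $\mathcal{L}_{{\textbf{w}},0}$, with $\|f_i\|_{\textbf{w}}\le 2$, while $f_i\to 0$ pointwise because $g_{n_{k_i}}$ and $g_{n_{j_i}}$ both converge pointwise to $g$. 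The hypothesis then forces $\|\psi f_i\|_\mathcal{L}\to 0$, contradicting $\|\psi f_i\|_\mathcal{L}\ge\varepsilon_0$. Hence $\{\psi g_{n_k}\}$ converges in $\mathcal{L}$, and its limit, being a limit of elements of the closed subspace $\mathcal{L}_0$, lies in $\mathcal{L}_0$; this establishes the compactness of $M_\psi$. Replacing the subtraction of the pointwise limit by this Cauchy argument on differences of subsequence terms is exactly what keeps every test function inside $\mathcal{L}_{{\textbf{w}},0}$, and is the main point to get right.
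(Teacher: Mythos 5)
Your proof is correct, and for the hard direction it does something genuinely different from what the paper has in mind. The paper gives no separate proof of this lemma: it simply declares it follows ``by an analogous argument'' from Lemma~\ref{compact lemma section 2}, whose reverse implication extracts a pointwise limit $g$ of a subsequence $\{g_{n_k}\}$, verifies $g\in\mathcal{L}_{\textbf{w}}$, and applies the hypothesis to $f_k=g_{n_k}-g$. You correctly observe that this adaptation breaks down in the little-space setting: the pointwise limit of a $\|\cdot\|_{\textbf{w}}$-bounded sequence in $\mathcal{L}_{{\textbf{w}},0}$ need not lie in $\mathcal{L}_{{\textbf{w}},0}$ (your truncated-logarithm example $g_N=\min\{\log(1+|\cdot|),\log(1+N)\}$ is valid, since each $g_N$ is eventually constant while the limit $\log(1+|\cdot|)$ has $|v|Df(v)\to 1$), so $g_{n_k}-g$ can leave the little space and the hypothesis, which is stated only for sequences in $\mathcal{L}_{{\textbf{w}},0}$, cannot be invoked. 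Your replacement --- showing $\{\psi g_{n_k}\}$ is $\mathcal{L}$-Cauchy by applying the hypothesis to the differences $g_{n_{k_i}}-g_{n_{j_i}}$, which automatically stay in the vector space $\mathcal{L}_{{\textbf{w}},0}$, are bounded by $2$ in norm, and tend to $0$ pointwise, then invoking completeness of $\mathcal{L}$ and closedness of $\mathcal{L}_0$ to place the limit in $\mathcal{L}_0$ --- is a clean and complete repair. The trade-off is minor: the paper's scheme identifies the limit explicitly as $\psi g$, whereas yours produces it abstractly from completeness; in exchange, yours never needs any membership statement about the limit function $g$ (not even $g\in\mathcal{L}_{\textbf{w}}$), which is exactly what makes it work where the literal ``analogous argument'' does not. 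The forward direction of your proof coincides with the paper's.
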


	The following result is a variant of Lemma~\ref{new}(a), which will be needed to prove a characterization of the compact multiplication operators from $\mathcal{L}_{\textbf{w}}$ to $\Lip$ and from $\mathcal{L}_{{\textbf{w}},0}$ to $\mathcal{L}_0$ (Theorem~\ref{compactness2}).

\begin{lemma}\label{variantnew} For {\rm $f\in\Lip_{\textbf{w}}$} and $v\in T$ {\rm
\ben |f(v)|\le |f(o)|+2\log(1+|v|)s_{\textbf{w}}(f),\label{estnew}\eeqn
where $s_{\textbf{w}}(f)=\sup\limits_{w\in T^*}|w|Df(w).$}
\end{lemma}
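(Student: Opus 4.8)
The plan is to bound $|f(v)|$ by summing the increments of $f$ along the geodesic from the root to $v$, and then to control the resulting harmonic-type sum by $2\log(1+|v|)$. If $v=o$ the claimed inequality reads $|f(o)|\le|f(o)|$ and holds trivially, so I would assume $v\in T^*$ and write the path from $o$ to $v$ as $[o=v_0,v_1,\dots,v_{|v|}=v]$, where $|v_k|=k$ and $v_k^-=v_{k-1}$ for each $k$. Then
\begin{equation*}
f(v)-f(o)=\sum_{k=1}^{|v|}\bigl(f(v_k)-f(v_{k-1})\bigr),
\end{equation*}
so by the triangle inequality and the definition of $s_{\textbf{w}}(f)$,
\begin{equation*}
|f(v)-f(o)|\le\sum_{k=1}^{|v|}Df(v_k)\le\sum_{k=1}^{|v|}\frac{s_{\textbf{w}}(f)}{k},
\end{equation*}
since $k\,Df(v_k)=|v_k|Df(v_k)\le s_{\textbf{w}}(f)$.

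The substantive step is to show that $\sum_{k=1}^{|v|}\frac1k\le 2\log(1+|v|)$. I would prove this by a telescoping comparison rather than by a crude integral estimate. The elementary inequality I would establish is
\begin{equation*}
\frac1k\le 2\bigl(\log(k+1)-\log k\bigr)=2\log\Bigl(1+\tfrac1k\Bigr)\qquad(k\ge1),
\end{equation*}
which follows by setting $t=1/k\in(0,1]$ and checking that $2\log(1+t)-t$ vanishes at $t=0$ and is nondecreasing on $[0,1]$ because its derivative $\frac{2}{1+t}-1$ is nonnegative there. Summing over $k=1,\dots,|v|$, the right-hand side telescopes to $2\log(1+|v|)$, giving the desired harmonic bound.

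Combining the two displays yields $|f(v)|\le|f(o)|+2\log(1+|v|)\,s_{\textbf{w}}(f)$, which is \eqref{estnew}. I expect the only genuine obstacle to be the harmonic-sum estimate; the telescoping inequality above is what makes the constant $2$ come out exactly and avoids the stray additive constant one would get from the bound $\sum_{k\le n}1/k\le 1+\log n$. Everything else is a direct path-summation argument using only the definition of $s_{\textbf{w}}(f)$.
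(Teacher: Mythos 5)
Your proof is correct and is essentially the paper's argument in different clothing: the paper proceeds by induction on $|v|$, which, once unrolled, is exactly your telescoping summation of edge increments along the geodesic from $o$ to $v$, and both hinge on the same key inequality $\frac{1}{k}\le 2\bigl(\log(k+1)-\log k\bigr)$ (the paper derives it from $\frac{1}{k+1}\le\log\frac{k+1}{k}$, you from a monotonicity check). No gap; nothing further is needed.
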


\begin{proof} Fix $v\in T$ and argue by induction on $n=|v|.$ For $n=0$, the inequality (\ref{estnew}) is obvious. So assume $|v|=n>0$ and $|f(u)|\le |f(o)|+2\log(1+|u|)s_{\textbf{w}}(f)$ for all vertices $u$ such that $|u|<n$. Then
\ben |f(v)|&\le&|f(v)-f(v^-)|+|f(v^-)|\nonumber\\
&\le&\frac1{|v|}s_{\textbf{w}}(f)+|f(o)|+2\log|v|s_{\textbf{w}}(f)\nonumber\\
&=&|f(o)|+\left(\frac1{|v|}+2\log|v|\right)s_{\textbf{w}}(f).\label{induct}\eeqn
Next, observe that $\frac1{|v|+1}\le \log\left(\frac{|v|+1}{|v|}\right)$, so
$$\frac1{|v|}\le \frac2{|v|+1}\le2\log\left(\frac{|v|+1}{|v|}\right).$$
Hence \ben \frac1{|v|}+2\log|v|\le 2\log(|v|+1).\label{logest}\eeqn Inequality (\ref{estnew}) now follows immediately from (\ref{induct}) and (\ref{logest}).
\end{proof}

\begin{theorem}\label{compactness2} Let $M_\psi$ be a bounded multiplication operator from {\rm $\mathcal{L}_{\textbf{w}}$} to $\mathcal{L}$ (or equivalently from {\rm $\mathcal{L}_{{\textbf{w}},0}$} to $\mathcal{L}_0$).  Then the following statements are equivalent:
\begin{enumerate}
\item[(a)] {\rm $M_\psi:\mathcal{L}_{\textbf{w}} \to \mathcal{L}$} is compact.
\item[(b)] {\rm $M_\psi:\mathcal{L}_{{\textbf{w}},0} \to \mathcal{L}_0$} is compact.
\item[(c)] $\displaystyle\lim_{|v|\to\infty} \frac{|\psi(v)|}{|v|+1} = 0$ and $\displaystyle\lim_{|v|\to\infty} D\psi(v)\log|v| = 0$.
\end{enumerate}
\end{theorem}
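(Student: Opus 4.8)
The plan is to prove all three equivalences by routing everything through (c): I will establish $(a)\Rightarrow(c)$, $(b)\Rightarrow(c)$, $(c)\Rightarrow(a)$, and $(c)\Rightarrow(b)$, using throughout the pointwise-convergence compactness criteria of Lemma~\ref{compact lemma section 2} and Lemma~\ref{compact lemma section 2_0}. A useful economy is that every test function I build is either finitely supported or eventually constant, hence lies in $\Lip_{{\textbf{w}},0}$; the same families therefore serve to prove both $(a)\Rightarrow(c)$ and $(b)\Rightarrow(c)$, and the two reverse implications are verbatim once the relevant compactness lemma is invoked. In particular no separate argument for $(a)\Leftrightarrow(b)$ is needed.

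For the forward directions I fix an arbitrary sequence $\{v_n\}\subset T^*$ with $|v_n|\to\infty$ and force each quantity in (c) to $0$ along it. For the first limit I take $f_n=\frac1{|v_n|+1}\chi_{v_n}$, exactly the functions from the norm lower bound of Theorem~\ref{lipmtolip}: they satisfy $\|f_n\|_{\textbf{w}}=1$, tend to $0$ pointwise, and give $\|\psi f_n\|_\Lip=\frac{|\psi(v_n)|}{|v_n|+1}$, so compactness and Lemma~\ref{compact lemma section 2} force $\frac{|\psi(v_n)|}{|v_n|+1}\to0$. For the second limit I use a two-scale truncated logarithm: choose an auxiliary level $|m_n|\to\infty$ with $\log(1+|m_n|)=o(\log(1+|v_n|))$ and set $f_n(w)=0$ for $|w|\le|m_n|$, $f_n(w)=\log(1+|w|)-\log(1+|m_n|)$ for $|m_n|<|w|\le|v_n|$, and $f_n(w)=\log(1+|v_n|)-\log(1+|m_n|)$ for $|w|>|v_n|$. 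A routine check gives $\|f_n\|_{\textbf{w}}\le1$ and $f_n\to0$ pointwise (the window recedes to infinity), while $f_n(v_n)\ge\frac12\log(1+|v_n|)$ for large $n$. Writing $D(\psi f_n)(v_n)=\big|[\psi(v_n)-\psi(v_n^-)]f_n(v_n)+\psi(v_n^-)\big(f_n(v_n)-f_n(v_n^-)\big)\big|$ and using $f_n(v_n)-f_n(v_n^-)\le\frac1{|v_n|}$ yields
$$\|\psi f_n\|_\Lip\ge D(\psi f_n)(v_n)\ge \tfrac12 D\psi(v_n)\log(1+|v_n|)-\frac{|\psi(v_n^-)|}{|v_n|}.$$
Since the subtracted term equals $\frac{|\psi(v_n^-)|}{|v_n^-|+1}$, which already tends to $0$ by the first limit, compactness drives $D\psi(v_n)\log(1+|v_n|)\to0$, hence $D\psi(v_n)\log|v_n|\to0$ because $D\psi$ is bounded and $\log(1+|v_n|)-\log|v_n|\to0$.

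For the reverse directions I assume (c), normalize a pointwise-null sequence so that $\|f_n\|_{\textbf{w}}\le1$, and estimate $\|\psi f_n\|_\Lip=|\psi(o)f_n(o)|+\sup_{v\in T^*}D(\psi f_n)(v)$. The term $|\psi(o)f_n(o)|\to0$ by pointwise convergence at $o$. For the supremum I use $D(\psi f_n)(v)\le D\psi(v)|f_n(v)|+|\psi(v^-)|Df_n(v)$, bound $|f_n(v)|$ by Lemma~\ref{variantnew} (so $|f_n(v)|\le|f_n(o)|+2\log(1+|v|)$ since $s_{\textbf{w}}(f_n)\le\|f_n\|_{\textbf{w}}\le1$), and split on $|v|$. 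For $|v|$ large the factor $D\psi(v)\log(1+|v|)$ is small by the second hypothesis and $\frac{|\psi(v^-)|}{|v|}=\frac{|\psi(v^-)|}{|v^-|+1}$ is small by the first, so the tail is uniformly small in $n$; over the finitely many remaining vertices the whole expression tends to $0$ as $n\to\infty$ by pointwise convergence. Lemma~\ref{compact lemma section 2} (resp.\ Lemma~\ref{compact lemma section 2_0}) then gives compactness.

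The delicate step is the construction for the second limit in $(a)\Rightarrow(c)$: a single test function must simultaneously recede to infinity (for pointwise nullity) yet climb to height $\asymp\log|v_n|$ at $v_n$ (to expose the full weight $\log|v_n|$ attached to $D\psi(v_n)$), and these requirements pull against each other, since the weighted norm caps the attainable height at $v$ near $\log|v|$ only when the ramp begins close to the root. Reconciling them is precisely what forces the two-scale choice with $\log(1+|m_n|)=o(\log(1+|v_n|))$, and the cross term $\psi(v_n^-)\big(f_n(v_n)-f_n(v_n^-)\big)$ must be absorbed using the first limit established beforehand, so the order in which the two limits are proved matters.
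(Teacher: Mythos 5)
Your proposal is correct and follows essentially the same route as the paper: the same pointwise-convergence compactness criteria (Lemmas~\ref{compact lemma section 2} and~\ref{compact lemma section 2_0}), the identical test family $f_n=\frac{1}{|v_n|+1}\chi_{v_n}$ for the first limit, a two-scale truncated logarithmic ramp for the second limit, and Lemma~\ref{variantnew} with a tail/finite-set splitting for the converse, with the equivalence of (b) handled by noting all test functions lie in $\Lip_{{\textbf{w}},0}$. The only substantive difference is that the paper's ramp plateaus starting at level $|v_n|-1$, so that $g_n(v_n)=g_n(v_n^-)$ and the cross term $\psi(v_n^-)\bigl(g_n(v_n)-g_n(v_n^-)\bigr)$ vanishes identically, whereas your plateau begins at $|v_n|$ and you absorb the cross term using the previously established first limit --- valid, but it forces the ordering of the two limits that the paper's construction renders unnecessary.
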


\begin{proof} We first prove $(a) \Longrightarrow (c)$.  Assume $M_\psi:\mathcal{L}_{\textbf{w}} \to \mathcal{L}$ is compact. It suffices to show that for any sequence $\{v_n\}$ in $T$ such that $2 \le |v_n| \to \infty$, we have
$\displaystyle\lim_{n \to \infty} \frac{|\psi(v_n)|}{|v_n|+1} = 0$ and $\displaystyle\lim_{n \to \infty}D\psi(v_n)\log|v_n| = 0$.
Let $\{v_n\}$ be such a sequence, and for each $n \in \N$, define $f_n = \frac{1}{|v_n|+1}\chi_{v_n}$.  Then $f_n(o) = 0$, $f_n \to 0$ pointwise as $n \to \infty$, and $\|f_n\|_{\textbf{w}} = 1$. By Lemma~\ref{compact lemma section 2}, it follows that $\|\psi f_n\|_\Lip\to 0$ as $n\to\infty$. Furthermore
$$\begin{aligned}
\|\psi f_n\|_\mathcal{L} = \sup_{v \in T^*} |\psi(v)f_n(v) - \psi(v^-)f_n(v^-)|
= |\psi(v_n)f_n(v_n)| = \frac{|\psi(v_n)|}{|v_n|+1}.
\end{aligned}$$ Thus $\lim\limits_{n \to \infty} \displaystyle\frac{|\psi(v_n)|}{|v_n|+1} = 0.$

Next, for each $n \in \N$ and $v\in T$, define
$$g_n(v) = \begin{cases}
0 &\text{ if } |v| < \sqrt{|v_n|},\\
2\log|v| - \log|v_n| &\text{ if } \sqrt{|v_n|} \leq |v| < |v_n|-1,\\
\log|v_n| &\text{ if } |v| \geq |v_n|-1.
\end{cases}$$  Then $Dg_n(v) = 0\,$ if $|v| \le \sqrt{|v_n|}\,$ or $|v| > |v_n|-1$. In addition, if $\sqrt{|v_n|} < |v| \le |v_n|-1$, then $|v|Dg_n(v) < 4$. Indeed, there are two possibilities. Either $\sqrt{|v_n|} \le |v|-1$, in which case $$|v|Dg_n(v)=2|v||(\log|v|-\log(|v|-1))\le \frac{2|v|}{|v|-1}\le 3,$$ or $|v|-1<\sqrt{|v_n|}<|v|$, in which case \ben |v|Dg_n(v)&=&|v|(2\log|v|-\log|v_n|)\nonumber\\&\le &(\sqrt{|v_n|}+1)\log\frac{(\sqrt{|v_n|}+1)^2}{|v_n|}\nonumber\\&\le& \frac{2(\sqrt{|v_n|}+1)}{\sqrt{|v_n|}}\le 2\left(1+\frac1{\sqrt{2}}\right)<4.\nonumber\eeqn
 Thus $\{\|g_n\|_{\textbf{w}}\}$ is bounded and $\{g_n\}$ converges to 0 pointwise. By Lemma \ref{compact lemma section 2}, it follows that $\|\psi g_n\|_\mathcal{L} \to 0$ as $n \to \infty$. Moreover $$\begin{aligned}
\|\psi g_n\|_\mathcal{L} \geq |\psi(v_n)g_n(v_n) - \psi(v_n^-)g_n(v_n^-)|
= D\psi(v_n)\log|v_n|.
\end{aligned}$$  Therefore $\lim\limits_{n \to \infty}D\psi(v_n)\log|v_n| = 0.$

To prove the implication $(c)\Longrightarrow (a)$, suppose $\displaystyle\lim_{|v|\to\infty} \frac{|\psi(v)|}{|v|+1} = 0$ and $\displaystyle\lim_{|v|\to\infty} D\psi(v)\log|v| = 0$.  Clearly, if $\psi$ is identically 0, then $M_\psi$ is compact. So assume $M_\psi:\mathcal{L}_{\textbf{w}} \to \mathcal{L}$ is bounded with $\psi$ not identically 0. By Lemma~\ref{compact lemma section 2}, it suffices to show that if $\{f_n\}$ is bounded in $\mathcal{L}_{\textbf{w}}$ converging to 0 pointwise, then $\|\psi f_n\|_\mathcal{L} \to 0$ as $n \to \infty$.  Let $\{f_n\}$ be such a sequence, let $s = \displaystyle\sup_{n \in \N} \|f_n\|_{\textbf{w}}$, and fix $\e > 0$. Note that
$$\lim_{|v|\to\infty}D\psi(v)\log(1+|v|)=\lim_{|v|\to\infty}D\psi(v)\log|v|\frac{\log(1+|v|)}{\log|v|}=0.$$ Thus there exists an $M \in \N$ such that $$|f_n(o)|<\frac{\e}{3s\|\psi\|_\Lip},\ D\psi(v)\log(1+|v|) < \frac{\e}{6s} \text{ and } \frac{|\psi(v)|}{|v|+1} < \frac{\e}{3s},$$ for $|v| \geq M$. Using Lemma~\ref{variantnew}, for $|v|> M$, we have
$$\begin{aligned}
D(\psi f_n)(v) %&= \sup_{v \in T^*} |\psi(v)f_n(v) - \psi(v^-)f_n(v^-)|\\
&\leq D\psi(v)|f_n(v)| + Df_n(v^-)|\psi(v^-)|\\
&\leq D\psi(v)\left(|f_n(o)|+2\log(|v|+1)\right)\|f_n\|_{\textbf{w}} + \|f_n\|_{\textbf{w}}\frac{|\psi(v^-)|}{|v|}\\
&\leq \left(\|\psi\|_\Lip|f_n(o)|+2D\psi(v)\log(|v|+1) + \frac{|\psi(v^-)|}{|v|}\right)\|f_n\|_{\textbf{w}}\\
&<\e.
\end{aligned}$$

On the other hand, on the set $B_M = \{v \in T : |v| \le M\}$, $\{f_n\}$ converges to 0 uniformly, and thus $Df_n$ does as well. Moreover
$$\begin{aligned}
D(\psi f_n)(v) &\le D\psi(v)|f_n(v)|+|\psi(v^-)|Df_n(v)\\
&\le \|\psi\|_\Lip|f_n(v)|+\max_{|w|\le M}|\psi(w)|Df_n(v)\to 0,\end{aligned}$$ uniformly on $B_M$. Therefore $D(\psi f_n)\to 0$ uniformly on $T$. Furthermore, the sequence $\{(\psi f_n)(o)\}$ converges to 0 as $n\to\infty$. Hence
 $\|\psi f_n\|_\mathcal{L} \to 0$ as $n \to \infty$, proving that $M_\psi$ is compact.

Finally, note that the functions $f_n$ and $g_n$ defined in the proof of $(a) \Longrightarrow (c)$ are in $\mathcal{L}_{w,0}$. So the equivalence of $(b)$ and $(c)$ is proved analogously.
\end{proof}
%%%%%%%%%%%%%%%%%%%%%%%%%%%%%%%%%%%%%%%%%%%%%%%%%%%%%%%%%%%%%%%%%%%%%%%%%%%%%%%%%%%%%%%%%%%%%5
Recall the \textit{essential norm} of a bounded operator $S$ between Banach spaces $\mathcal{X}$ and $\mathcal{Y}$ is defined as $$\|S\|_e = \inf\;\{\|S-K\| : K \text{ is compact from $\mathcal{X}$ to $\mathcal{Y}$}\}.$$  For $\psi$ a function on $T$, define the quantities
$$\begin{aligned}
A(\psi) &= \lim_{n \to \infty} \sup_{|v|\geq n} \frac{|\psi(v)|}{|v|+1},\\
B(\psi) &= \lim_{n \to \infty} \sup_{|v|\geq n} D\psi(v)\log|v|.
\end{aligned}$$

\begin{theorem} Let $M_\psi$ be a bounded multiplication operator from {\rm $\mathcal{L}_{\textbf{w}}$} to $\mathcal{L}$.  Then $$\|M_\psi\|_e \geq \max\left\{A(\psi),B(\psi)\right\}.$$\end{theorem}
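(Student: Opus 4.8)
The plan is to bound $\|M_\psi\|_e$ from below by testing $M_\psi-K$, for an arbitrary compact operator $K$, against two suitably normalized families of functions converging to $0$ pointwise, one family capturing $A(\psi)$ and the other $B(\psi)$. The mechanism I would first record is the following. Let $K:\mathcal{L}_{\textbf{w}}\to\mathcal{L}$ be compact and let $\{f_n\}\subset\mathcal{L}_{\textbf{w},0}$ satisfy $\|f_n\|_{\textbf{w}}=1$ and $f_n\to 0$ pointwise on $T$. By Lemma~\ref{weakconv_Lipw}, $f_n\to 0$ weakly in $\mathcal{L}_{\textbf{w},0}$, and since every bounded functional on $\mathcal{L}_{\textbf{w}}$ restricts to one on the subspace $\mathcal{L}_{\textbf{w},0}$, we also get $f_n\to 0$ weakly in $\mathcal{L}_{\textbf{w}}$; compactness of $K$ then forces $\|Kf_n\|_{\mathcal{L}}\to 0$. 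Consequently, for each $n$,
\[
\|M_\psi-K\|\ge \|(M_\psi-K)f_n\|_{\mathcal{L}}\ge \|M_\psi f_n\|_{\mathcal{L}}-\|Kf_n\|_{\mathcal{L}},
\]
so letting $n\to\infty$ and then taking the infimum over all compact $K$ yields $\|M_\psi\|_e\ge \limsup_{n}\|M_\psi f_n\|_{\mathcal{L}}$. It therefore suffices to exhibit two admissible families realizing $A(\psi)$ and $B(\psi)$.

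For the bound $\|M_\psi\|_e\ge A(\psi)$ I would use point masses. From the definition of $A(\psi)$, choose vertices $v_n$ with $|v_n|\to\infty$ and $\frac{|\psi(v_n)|}{|v_n|+1}\to A(\psi)$, and set $f_n=\frac{1}{|v_n|+1}\chi_{v_n}$. These lie in $\mathcal{L}_{\textbf{w},0}$, satisfy $\|f_n\|_{\textbf{w}}=1$, converge to $0$ pointwise, and, exactly as in the proof of Theorem~\ref{lipmtolip}, $\|M_\psi f_n\|_{\mathcal{L}}=\frac{|\psi(v_n)|}{|v_n|+1}$. The mechanism above then gives $\|M_\psi\|_e\ge A(\psi)$.

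For $\|M_\psi\|_e\ge B(\psi)$ the test functions require more care. Choose $v_n$ with $|v_n|\to\infty$ and $D\psi(v_n)\log|v_n|\to B(\psi)$, and pick a cutoff sequence $m_n\to\infty$ with $\log(1+m_n)=o(\log|v_n|)$ (for instance $m_n=\lfloor\log|v_n|\rfloor$). Define
\[
h_n(w)=\begin{cases} 0 & |w|\le m_n,\\ \log(1+|w|)-\log(1+m_n) & m_n<|w|\le |v_n|-1,\\ \log|v_n|-\log(1+m_n) & |w|\ge |v_n|-1.\end{cases}
\]
The vanishing below level $m_n$ guarantees $h_n\to 0$ pointwise, while the plateau above level $|v_n|-1$ makes $h_n$ eventually constant, so $h_n\in\mathcal{L}_{\textbf{w},0}$. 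A direct check, using $\log(1+x)\le x$ at each level, shows $|w|Dh_n(w)\le 1$ for every $w\in T^*$, whence $\|h_n\|_{\textbf{w}}\le 1$. Crucially, the plateau is arranged so that $h_n(v_n)=h_n(v_n^-)=\log|v_n|-\log(1+m_n)$, giving
\[
\|M_\psi h_n\|_{\mathcal{L}}\ge D(\psi h_n)(v_n)=D\psi(v_n)\bigl(\log|v_n|-\log(1+m_n)\bigr).
\]
Normalizing $\tilde h_n=h_n/\|h_n\|_{\textbf{w}}$ (so $\|\tilde h_n\|_{\textbf{w}}=1$ and $\|M_\psi\tilde h_n\|_{\mathcal{L}}\ge\|M_\psi h_n\|_{\mathcal{L}}$) and applying the mechanism, I obtain
\[
\|M_\psi\|_e\ge\limsup_n D\psi(v_n)\bigl(\log|v_n|-\log(1+m_n)\bigr)=\limsup_n D\psi(v_n)\log|v_n|=B(\psi),
\]
where the middle equality uses $\log(1+m_n)=o(\log|v_n|)$ together with the boundedness of $D\psi(v_n)\log|v_n|$. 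Combining the two bounds yields $\|M_\psi\|_e\ge\max\{A(\psi),B(\psi)\}$.

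The main obstacle is precisely the $B(\psi)$ construction. The truncated logarithm used for the operator norm in Theorem~\ref{lipmtolip} does not tend to $0$ pointwise, while the plateau functions from the proof of Theorem~\ref{compactness2} have $\mathcal{L}_{\textbf{w}}$-norm as large as $4$ and would only deliver $B(\psi)/4$. The resolution is to kill the function on an initial segment that grows slowly enough, namely $\log(1+m_n)=o(\log|v_n|)$, so that the attained height remains asymptotic to $\log|v_n|$, yet fast enough, namely $m_n\to\infty$, to force pointwise convergence to $0$; isolating the single difference $D\psi(v_n)$ then forces the top plateau to begin exactly at level $|v_n|-1$.
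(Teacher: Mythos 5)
Your proof is correct, and its overall skeleton is the paper's: norm-one sequences in $\mathcal{L}_{\textbf{w},0}$ tending to $0$ pointwise are weakly null by Lemma~\ref{weakconv_Lipw}, hence sent to $0$ in norm by any compact operator, so $\|M_\psi\|_e\ge\limsup_n\|M_\psi f_n\|_{\mathcal{L}}$ for any such test family. (Your handling of this step is in fact slightly more careful than the paper's: you take $K$ compact on all of $\mathcal{L}_{\textbf{w}}$ and pass weak convergence from the subspace up to $\mathcal{L}_{\textbf{w}}$, whereas the paper quietly restricts to compacts from $\mathcal{L}_{\textbf{w},0}$ to $\mathcal{L}_0$.) For $A(\psi)$ the difference is cosmetic: you use point masses $\frac{1}{|v_n|+1}\chi_{v_n}$ along a realizing sequence of vertices, the paper uses the normalized characteristic functions $\frac{1}{n+1}\chi_n$ of the spheres $\{|v|=n\}$; both give the same computation. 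The genuine divergence is in the $B(\psi)$ witnesses, which are the heart of the proof. The paper takes $h_n(v)=\left[\log(|v|+1)\right]^2/\log|v_n|$ capped at $\log|v_n|$: the division by $\log|v_n|$ forces pointwise convergence to $0$, the norm $s_n$ satisfies $s_n\to 1$ (though it can exceed $1$), and the plateau value at $v_n$ and $v_n^-$ is exactly $\log|v_n|$. You instead keep the natural logarithmic profile but annihilate it on a ball of slowly growing radius $m_n$, with $m_n\to\infty$ forcing pointwise convergence and $\log(1+m_n)=o(\log|v_n|)$ keeping the attained height $\log|v_n|-\log(1+m_n)$ asymptotic to $\log|v_n|$; the elementary estimate $|w|\log(1+1/|w|)\le 1$ (including at the edge level $|w|=m_n+1$) gives $\|h_n\|_{\textbf{w}}\le 1$ exactly. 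Both constructions produce what the argument needs, a weakly null, essentially norm-one sequence equal to roughly $\log|v_n|$ at both $v_n$ and $v_n^-$, so they are interchangeable: yours buys a clean norm bound of $1$ at the cost of an asymptotic (rather than exact) height, while the paper's buys an exact height at the cost of tracking the normalizing factor $s_n\to 1$.
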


\begin{proof}
For each $n \in \N$, define $f_n = \frac{1}{n+1}\chi_n$, where $\chi_n$ denotes the characteristic function of the set $\{v\in T:|v|=n\}$.  Then $f_n \in \mathcal{L}_{{\textbf{w}},0}$, $\|f_n\|_{\textbf{w}} = 1$, and $f_n \to 0$ pointwise.  Thus, by Lemma~\ref{weakconv_Lipw}, $\{f_n\}$ converges to 0 weakly in $\mathcal{L}_{{\textbf{w}},0}$.  Let $\mathcal{K}$ be the set of compact operators from from $\mathcal{L}_{{\textbf{w}},0}$ to $\mathcal{L}_0$, and let $K \in \mathcal{K}$.  Then $K$ is completely continuous \cite{Conway:07}, and so $\|Kf_n\|_{\mathcal{L}} \to 0$ as $n \to \infty$.  Thus
$$\|M_\psi - K\| \geq \limsup_{n \to \infty} \|(M_\psi - K)f_n\|_{\mathcal{L}} \geq \limsup_{n \to \infty} \|M_\psi f_n\|_{\mathcal{L}}.$$  Now note that
$$\|M_\psi f_n\|_\Lip=\sup_{|v|=n}\frac{|\psi(v)|}{n+1}.$$
Hence
$$\begin{aligned}\|M_\psi\|_e &\geq \inf\{\|M_\psi - K\| : K \in \mathcal{K}\}\\
&\geq \limsup_{n\to\infty} \|M_\psi f_n\|_\mathcal{L}\\
%&= \limsup_{n \to \infty} \sup_{|v| \geq n} |\psi(v)f_n(v) - \psi(v^-)f_n(v^-)|\\
&= \lim_{n \to \infty} \sup_{|v|\geq n} \frac{|\psi(v)|}{|v|+1}\\
&= A(\psi).
\end{aligned}$$  We will now show that $\|M_\psi\|_e \geq B(\psi)$.  This estimate is clearly true if $B(\psi) = 0$.  So assume $\{v_n\}$ is a sequence in $T$ such that $2 \leq |v_n| \to \infty$ as $n \to \infty$ and $$\lim_{n \to \infty} D\psi(v_n)\log|v_n| = B(\psi).$$  For $n \in \N$ and $v \in T$, define $$h_n(v) = \begin{cases}
\displaystyle\frac{\left[\log(|v|+1)\right]^2}{\log |v_n|} &\text{ if } 0 \leq |v| < |v_n|,\\
\log|v_n| &\text{ if } |v| \geq |v_n|.
\end{cases}$$  Then $h_n(o) = 0$, $h_n(v_n) = h_n(v_n^-) = \log|v_n|$, and $$|v|Dh_n(v) = \begin{cases}
\frac{|v|}{\log|v_n|}\log\left(\frac{|v|+1}{|v|}\right)\log\left[|v|(|v|+1)\right] &\text{ if } 1 \leq |v| < |v_n|,\\
\ \ \ 0 &\text{ if } |v| \geq |v_n|.
\end{cases}$$

The supremum of $|v|Dh_n(v)$ is attained at the vertices of length $|v_n|-1$ and is given by
$$s_n = \sup_{v \in T^*} |v|Dh_n(v) = (|v_n|-1)\log\left(\frac{|v_n|}{|v_n|-1}\right)\frac{\log\left[(|v_n|-1)|v_n|\right]}{\log|v_n|}.$$  Since $(|v_n|-1)\log\left(\frac{|v_n|}{|v_n|-1}\right) \leq 1$, we have
$$\frac{(\log 2)^2}{\log|v_n|} \leq \|h_n\|_{\textbf{w}} = s_n \leq \frac{\log\left[(|v_n|-1)|v_n|\right]}{\log|v_n|}<2.$$

By letting $g_n = \frac{h_n}{\|h_n\|_{\textbf{w}}}$, we have $g_n \in \mathcal{L}_{{\textbf{w}},0}$, $\|g_n\|_{\textbf{w}} = 1$, and $g_n \to 0$ pointwise.  By Lemma~\ref{weakconv_Lipw}, the sequence $\{g_n\}$ converges to 0 weakly in $\mathcal{L}_{{\textbf{w}},0}$.  Thus $\|Kg_n\|_\mathcal{L} \to 0$ as $n \to \infty$.  Therefore
$$\|M_\psi - K\| \geq \limsup_{n \to \infty} \|(M_\psi - K)g_n\|_\mathcal{L} \geq \limsup_{n \to \infty} \|\psi g_n\|_\mathcal{L}.$$

For each $n \in \N$, we have $g_n(v_n) = g_n(v_n^-) = \frac{\log|v_n|}{s_n}$.  So $$D(\psi g_n)(v_n) = \frac{1}{s_n}D\psi(v_n)\log|v_n|.$$  Since $\displaystyle\lim_{n \to \infty} s_n = 1$, we have
$$\begin{aligned}\|M_\psi\|_e &\geq \inf\{\|M_\psi - K\| : K \in \mathcal{K}\}\\
&\geq \limsup_{n \to \infty} \sup_{v \in T^*} D(\psi g_n)(v)\\
&\geq \lim_{n \to \infty}\frac{1}{s_n}D\psi(v_n)\log|v_n|\\
&= B(\psi).
\end{aligned}$$
Therefore, $\|M_\psi\|_e \geq \max\left\{A(\psi),B(\psi)\right\}$.
\end{proof}

Now now derive an upper estimate on the essential norm.

\begin{theorem}\label{upp_est_ess} Let $M_\psi$ be a bounded multiplication operator from $\mathcal{L}_{\textbf{w}}$ to $\mathcal{L}$.  Then $$\|M_\psi\|_e \leq A(\psi) + B(\psi).$$\end{theorem}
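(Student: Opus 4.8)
The plan is to approximate $M_\psi$ by a sequence of finite-rank operators obtained by \emph{freezing the product} $\psi f$ beyond a given level. For $n\in\N$, let $B_n=\{v\in T:|v|\le n\}$, and for a vertex $v$ with $|v|>n$ let $v_{(n)}$ denote its ancestor of length $n$. Define $K_n:\Lip_{\textbf{w}}\to\Lip$ by
\[
(K_nf)(v)=\begin{cases}(\psi f)(v)&\text{if }|v|\le n,\\ (\psi f)(v_{(n)})&\text{if }|v|>n.\end{cases}
\]
Since $K_nf$ is completely determined by the finitely many values $\{(\psi f)(v):|v|\le n\}$, the operator $K_n$ factors through a finite-dimensional space and hence has finite rank; in particular each $K_n$ is compact. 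One should note that the more naive choice of truncating the symbol, $\psi_n=\psi\chi_{B_n}$, does \emph{not} work: the difference operator of $\psi-\psi_n$ equals $|\psi(v)|$ at vertices $v$ of length $n+1$, producing a term $|\psi(v)|\log(1+|v|)$ that is not controlled by $B(\psi)$. Freezing the product rather than the symbol is precisely what removes this boundary defect.

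The key computation is that $(M_\psi-K_n)f$ vanishes on $B_n$ and, by telescoping, satisfies $D\big((M_\psi-K_n)f\big)(v)=D(\psi f)(v)$ for every $v$ with $|v|\ge n+1$, and is $0$ otherwise. At level $n+1$ one checks this directly: the value of $(M_\psi-K_n)f$ at $v^-$ is $0$ while its value at $v$ is $(\psi f)(v)-(\psi f)(v^-)$, so the difference operator again returns $D(\psi f)(v)$; for $|v|\ge n+2$ the two frozen base-points coincide and the same identity holds. Consequently $\big((M_\psi-K_n)f\big)(o)=0$ and
\[
\|(M_\psi-K_n)f\|_\Lip=\sup_{|v|\ge n+1}D(\psi f)(v).
\]

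To estimate the right-hand side I would use the product rule $D(\psi f)(v)\le D\psi(v)|f(v)|+|\psi(v^-)|Df(v)$ together with Lemma~\ref{new}(a) (giving $|f(v)|\le(1+\log|v|)\|f\|_{\textbf{w}}$), the bound $|v|Df(v)\le\|f\|_{\textbf{w}}$, and the identity $|v^-|+1=|v|$. For $\|f\|_{\textbf{w}}\le 1$ this yields $D(\psi f)(v)\le D\psi(v)(1+\log|v|)+\frac{|\psi(v^-)|}{|v^-|+1}$, whence
\[
\|M_\psi-K_n\|\le\sup_{|v|\ge n+1}D\psi(v)(1+\log|v|)+\sup_{|v|\ge n+1}\frac{|\psi(v^-)|}{|v^-|+1}.
\]
Letting $n\to\infty$, the second supremum tends to $A(\psi)$, since $v^-$ ranges over $\{|w|\ge n\}$; the first tends to $B(\psi)$, because $1+\log|v|=(\log|v|)\big(1+\tfrac{1}{\log|v|}\big)$ and the correction factor tends to $1$ uniformly as $|v|\to\infty$, so $\limsup_n\sup_{|v|\ge n+1}D\psi(v)(1+\log|v|)\le B(\psi)$, while the reverse inequality is immediate from $1+\log|v|\ge\log|v|$. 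Since $K_n$ is compact, $\|M_\psi\|_e\le\|M_\psi-K_n\|$ for every $n$, and taking $n\to\infty$ gives $\|M_\psi\|_e\le A(\psi)+B(\psi)$.

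The main obstacle is not the two routine limit identifications but the design of the approximating operators: one must choose them so that $D\big((M_\psi-K_n)f\big)$ collapses exactly to $D(\psi f)$ on the tail $\{|v|\ge n+1\}$ with no residual boundary contribution, and simultaneously so that each $K_n$ is genuinely compact. Freezing the product $\psi f$ along branches beyond level $n$ achieves both at once, and verifying these two properties is the heart of the argument.
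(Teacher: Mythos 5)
Your proof is correct, but your decomposition is the mirror image of the paper's. The paper freezes the \emph{input}: it defines $K_n$ on $\Lip_{\textbf{w}}$ by $(K_nf)(v)=f(v)$ for $|v|\le n$ and $f(v_n)$ beyond, proves $K_n$ is compact, and uses $M_\psi K_n$ as the compact approximant; the remainder is $M_\psi J_n$ with $J_n=I-K_n$, estimated by the product rule applied to $J_nf$ together with $|(J_nf)(v)|\le(1+\log|v|)\|f\|_{\textbf{w}}$ and $|v|D(J_nf)(v)\le\|f\|_{\textbf{w}}$. You instead freeze the \emph{output} $\psi f$, so your approximant is (in the paper's notation) $K_nM_\psi$ rather than $M_\psi K_n$. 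What your ordering buys is the exact telescoping identity $\|(M_\psi-K_nM_\psi)f\|_\Lip=\sup_{|v|\ge n+1}D(\psi f)(v)$, with no residual boundary term and no need to track the auxiliary function $J_nf$ through Lemma~\ref{new}; after that, both arguments apply the same product rule and the same two limit computations, and your identification $\sup_{|v|\ge n+1}|\psi(v^-)|/(|v^-|+1)=\sup_{|w|\ge n}|\psi(w)|/(|w|+1)$ is legitimate because the tree has no terminal vertices (a standing assumption). What the paper's ordering buys is modularity: its $K_n$ is independent of $\psi$ and acts on the domain space, so the same compactness proof is reused verbatim in Theorems~\ref{upper_ess_est}, \ref{essnorm_toinfty} and elsewhere, whereas your $K_n$ must be re-justified for each symbol. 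Two small points to tighten: finite rank alone does not imply compactness---you also need $K_n$ bounded, which does follow from your factorization once you note that the evaluations $f\mapsto(\psi f)(v)$, $|v|\le n$, are bounded on $\Lip_{\textbf{w}}$ by Lemma~\ref{new}(a); and your side remark that truncating the symbol to $\psi\chi_{B_n}$ fails is correct (the boundary jump contributes $|\psi(v)|\log(1+|v|)$ at $|v|=n+1$, which is not controlled by $A(\psi)+B(\psi)$), which is precisely why both you and the paper freeze along sectors instead.
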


\begin{proof}
For $n \in \N$, define the operator $K_n$ on $\mathcal{L}_{\textbf{w}}$ by
$$(K_n f)(v) = \begin{cases}
f(v) &\text{ if } |v| \leq n,\\
f(v_n) &\text{ if } |v| > n,
\end{cases}$$ where $f \in \mathcal{L}_{\textbf{w}}$ and $v_n$ is the ancestor of $v$ of length $n$. For $f \in \mathcal{L}_{\textbf{w}}$, $(K_n f)(o) = f(o)$, and $K_n f \in \mathcal{L}_{{\textbf{w}},0}$. Let $B_n = \{v \in T : |v| \leq n\}$ and note that $K_n f$ attains finitely many values, whose number does not exceed the cardinality of $B_n$.  Let $\{g_k\}$ be a sequence in $\mathcal{L}_{\textbf{w}}$ such that $\|g_k\|_{\textbf{w}} \leq 1$ for each $k \in \N$.  Then $a = \displaystyle\sup_{k \in \N} |g_k(o)| \leq 1$ and $|K_ng_k(o)| \leq a$.  Furthermore, by part (a) of Lemma~\ref{new}, for each $v \in T^*$ and for each $k \in \N$, we have $|K_ng_k(v)| \leq 1 + \log n$.  Thus, some subsequence of $\{K_ng_k\}_{k \in \N}$ must converge to a function $g$ on $T$ attaining constant values on the sectors determined by the vertices of length $n$.  It follows that this subsequence converges to $g$ in $\mathcal{L}_{\textbf{w}}$ as well, proving that $K_n$ is a compact operator on $\mathcal{L}_{\textbf{w}}$. Since $M_\psi$ is bounded as an operator from $\mathcal{L}_{\textbf{w}}$ to $\mathcal{L}$, it follows that $M_\psi K_n:\mathcal{L}_{\textbf{w}}\to \mathcal{L}$ is compact for all $n \in \N$.

	Define the operator $J_n = I-K_n$, where $I$ denotes the identity operator on $\mathcal{L}_{\textbf{w}}$. Then $J_nf(o)=0$ and for $v \in T^*$, we have
\ben
|v|D(J_n f)(v) = |v||(J_n f)(v) - (J_n f)(v^-)|\leq |v|Df(v)\leq \|f\|_{\textbf{w}}.\label{equa1}\eeqn
 By part (a) of Lemma~\ref{new}, we see that \ben |(J_n f)(v)| \leq (1+\log|v|)\|f\|_{\textbf{w}}.\label{equa2}\eeqn

Using (\ref{equa1}) and (\ref{equa2}), we obtain
$$\begin{aligned}
\|(M_\psi - M_\psi K_n)f\|_\mathcal{L} &= \|\psi(J_n f)\|_\mathcal{L}\\
&= \sup_{|v| > n} |\psi(v)(J_n f)(v) - \psi(v^-)(J_n f)(v^-)|\\
&\leq \sup_{|v| > n} \left[|(J_n f)(v)|D\psi(v)+|\psi(v^-)|D(J_n f)(v)\right]\\
&= \sup_{|v| > n} \left[|(J_n f)(v)|D\psi(v) + \frac{|\psi(v^-)|}{|v|}|v|D(J_n f)(v)\right]\\
&\le \sup_{|v| \geq n} \left[(1+\log|v|)D\psi(v)+ \frac{|\psi(v)|}{|v|+1}\right]\|f\|_{\textbf{w}}\\
&\le \sup_{|v| \geq n} \left[\log|v|D\psi(v)\frac{1+\log|v|}{\log|v|}+ \frac{|\psi(v)|}{|v|+1}\right]\|f\|_{\textbf{w}}\\
&\le \left[\sup_{|v| \geq n} \log|v|D\psi(v)\frac{1+\log n}{\log n}
+ \sup_{|v| \geq n} \frac{|\psi(v)|}{|v|+1}\right]\|f\|_{\textbf{w}}.
\end{aligned}$$  Since $$\|M_\psi\|_e \leq \limsup_{n \to \infty} \|M_\psi - M_\psi K_n\| = \limsup_{n \to \infty} \sup_{\|f\|_{\textbf{w}} = 1} \|(M_\psi - M_\psi K_n)f\|_\mathcal{L},$$ taking the limit as $n \to \infty$, we obtain $$\|M_\psi\|_e \leq B(\psi)+A(\psi).\;\qedhere$$
\end{proof}

%%%%%%%%%%%%%%%%%%%%%%%%%%%%%%%%%%%%%%%%%%%%%%%%%%%%%%%%%%%%%%%%%%%%%%%%%%%%%%%%%%%%%%%%%%%%%%5
\section{Multiplication operators from $\Lip$ to $\Lip_{\textbf{w}}$}

We begin this section with a boundedness criterion for the multiplication operators from $M_\psi : \Lip \to \Lip_{\textbf{w}}$ and $M_\psi : \Lip_0 \to \Lip_{\textbf{w},0}$.  %We first characterize the bounded operators and establish estimates on the operator norm. We then show there are no isometric multiplication operators from $\Lip$ to $\Lip_{\textbf{w}}$ or from $\Lip_0$ to $\Lip_{\textbf{w},0}$. Finally, we characterize the compact multiplication operators and establish estimates on the essential norm.

\subsection{Boundedness and Operator Norm Estimates}
Let $\psi$ be a function on the tree $T$.  Define the quantities
$$\begin{aligned}
\theta_\psi &= \sup_{v\in T^*}|v|^2 D\psi(v),\\
\o_\psi &= \sup_{v\in T}(|v|+1)|\psi(v)|.
\end{aligned}$$

\begin{theorem}\label{liptolipm} For a function $\psi$ on $T$, the following statements are equivalent:
\begin{enumerate}
\item[\rm{(a)}] {\rm $M_\psi:\Lip\to \Lip_{\textbf{w}}$} is bounded.
\item[\rm{(b)}] {\rm $M_\psi:\Lip_0\to \Lip_{{\textbf{w}},0}$} is bounded.
\item[\rm{(c)}] $\theta_\psi$ and $\o_\psi$ are finite.
\end{enumerate}
Furthermore, under the above conditions, we have
$$\max\{\theta_\psi,\o_\psi\}\le \|M_\psi\|\le \theta_\psi+\o_\psi.$$
\end{theorem}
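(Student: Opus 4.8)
The plan is to follow the template of Theorem~\ref{lipmtolip}, establishing the cycle $(c)\Rightarrow(a)\Rightarrow(c)$ together with $(c)\Rightarrow(b)\Rightarrow(c)$ and the two-sided norm bound. The computational engine throughout is the elementary splitting
\[
D(\psi f)(v)=\bigl|(\psi(v)-\psi(v^-))f(v)+\psi(v^-)(f(v)-f(v^-))\bigr|\le D\psi(v)\,|f(v)|+|\psi(v^-)|\,Df(v),
\]
combined with the reductions $|v|\,|\psi(v^-)|=(|v^-|+1)|\psi(v^-)|\le\o_\psi$ and $|v|^2 D\psi(v)\le\theta_\psi$ (hence also $|v|D\psi(v)\le\theta_\psi$ since $|v|\ge1$), which turn the weighted difference $|v|D(\psi f)(v)$ into quantities controlled by $\theta_\psi$ and $\o_\psi$. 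For $(c)\Rightarrow(a)$ I would multiply the displayed inequality by $|v|$ and insert the growth bound $|f(v)|\le|f(o)|+|v|\,\|Df\|_\infty$ from Lemma~\ref{old}(a): the first term is then at most $\theta_\psi|f(o)|+\theta_\psi\|Df\|_\infty$ and the second at most $\o_\psi\|Df\|_\infty$. Taking the supremum over $v\in T^*$ and adding $|(\psi f)(o)|=|\psi(o)||f(o)|\le\o_\psi|f(o)|$ collapses everything to $\|\psi f\|_{\textbf{w}}\le(\theta_\psi+\o_\psi)\|f\|_\Lip$, giving both boundedness and the upper estimate. The same estimate proves the boundedness half of $(c)\Rightarrow(b)$; to see that $\psi f\in\Lip_{{\textbf{w}},0}$ when $f\in\Lip_0$ I would write $|v|D\psi(v)|f(v)|=|v|^2D\psi(v)\cdot\frac{|f(v)|}{|v|}\le\theta_\psi\frac{|f(v)|}{|v|}\to0$ using Lemma~\ref{old}(b), the other term being $\le\o_\psi Df(v)\to0$.

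For necessity, $(a)\Rightarrow(c)$ and $(b)\Rightarrow(c)$, I would test on explicit functions. The characteristic functions $\chi_v$ satisfy $\|\chi_v\|_\Lip=1$ and $\|\psi\chi_v\|_{\textbf{w}}=(|v|+1)|\psi(v)|$, the dominant contribution coming from a child of $v$ (which exists because $T$ has no terminal vertices); this yields $(|v|+1)|\psi(v)|\le\|M_\psi\|$, hence $\o_\psi\le\|M_\psi\|<\infty$, and since $\chi_v\in\Lip_0$ the same argument covers $(b)$. For $\theta_\psi$ I would test on $f(w)=|w|$, which has $\|f\|_\Lip=1$, and use the reverse triangle inequality $D(\psi f)(v)\ge|v|D\psi(v)-|\psi(v^-)|$ to get $|v|^2D\psi(v)\le|v|D(\psi f)(v)+|v||\psi(v^-)|\le\|M_\psi\|+\o_\psi$, whence $\theta_\psi<\infty$. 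In the little-space case $f(w)=|w|$ must be replaced by $f_\a(w)=|w|^\a$ with $0<\a<1$, which lies in $\Lip_0$ with $\|f_\a\|_\Lip=1$, and one lets $\a\to1$ exactly as with the functions $f_\a$ used in Theorem~\ref{lipmtolip}.

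It remains to prove the lower norm estimate $\|M_\psi\|\ge\max\{\theta_\psi,\o_\psi\}$. The bound $\|M_\psi\|\ge\o_\psi$ is already contained in the $\chi_v$ computation above, so the work is concentrated in $\|M_\psi\|\ge\theta_\psi$. The natural test functions here are the capped linear functions $g_v(w)=\min\{|w|,|v|\}$, which satisfy $\|g_v\|_\Lip=1$ and are constant beyond level $|v|$; normalizing and evaluating the weighted difference of $\psi g_v$ at a child of the plateau isolates $D\psi$ at that vertex.

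The delicate point, and the step I expect to be the main obstacle, is precisely this last estimate. Evaluating $\psi g_v$ at a child of the plateau produces $|v|(|v|-1)D\psi(v)$ rather than $|v|^2D\psi(v)$, the loss of the factor $\tfrac{|v|-1}{|v|}$ being forced by the growth constraint $|f(v^-)|\le|v|-1$ that any norm-one $f\in\Lip$ must obey; optimizing over all norm-one test functions at a single vertex cannot remove this factor. I therefore expect the clean constant $\theta_\psi$ to emerge only by passing to the limit $|v|\to\infty$ along a sequence realizing the supremum, so that the spurious factor tends to $1$. Carrying out this limiting argument carefully—and in particular reconciling it with the behavior of the supremum defining $\theta_\psi$—is where the real difficulty of the lower estimate lies.
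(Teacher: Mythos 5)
Everything you actually carried out coincides with the paper's own proof: the splitting of $D(\psi f)$, the reductions $|v|\,|\psi(v^-)|\le\o_\psi$ and $|v|D\psi(v)\le|v|^2D\psi(v)\le\theta_\psi$, the resulting bound $\|\psi f\|_{\textbf{w}}\le(\theta_\psi+\o_\psi)\|f\|_\Lip$, the use of $\chi_v$ to get $\o_\psi\le\|M_\psi\|$, and the little-space variants. The only cosmetic difference is in the test functions for $\theta_\psi<\infty$: you use $f(w)=|w|$ and, for the little spaces, $f_\a(w)=|w|^\a$ with $\a\to1$, whereas the paper uses the capped functions $f_v(w)=\min\{|w|,|v|\}$, which already lie in $\Lip_0$ and hence serve $(a)\Rightarrow(c)$ and $(b)\Rightarrow(c)$ simultaneously; both routes give the same inequality $\theta_\psi\le\|M_\psi\|+\o_\psi$.

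The step you flagged as the real obstacle is not something you failed to see; it is a step that cannot be done, and your diagnosis of why is exactly right. The paper's own proof at this point is a non sequitur: from the capped test functions it derives precisely what you derived, namely $\theta_\psi\le\|M_\psi\|+\o_\psi$ together with $\o_\psi\le\|M_\psi\|$, and then asserts the lower estimate $\|M_\psi\|\ge\max\{\theta_\psi,\o_\psi\}$, which does not follow. In fact the inequality $\|M_\psi\|\ge\theta_\psi$ is false in general. Take $\psi=\chi_u$ for a fixed vertex $u$ with $|u|=n\ge1$. Then $M_\psi f=f(u)\chi_u$, and since $\|\chi_u\|_{\textbf{w}}=n+1$ and $\sup\{|f(u)|:\|f\|_\Lip\le1\}=n$ (attained by $f(w)=\min\{|w|,n\}$), one gets $\|M_\psi\|=n(n+1)$; on the other hand $D\psi=1$ at $u$ and at each child of $u$ (children exist because $T$ has no terminal vertices), so $\theta_\psi=(n+1)^2>n(n+1)=\|M_\psi\|$. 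This example also shows why your hoped-for limiting argument cannot be reconciled with the supremum defining $\theta_\psi$: your clean single-vertex bound $\|M_\psi\|\ge|v|(|v|-1)D\psi(v)$ is sharp (it gives exactly $n(n+1)$ above), and letting $|v|\to\infty$ recovers only the asymptotic quantity $\mathcal{B}(\psi)=\lim_{n\to\infty}\sup_{|v|\ge n}|v|^2D\psi(v)$ (note $|v|D\psi(v)\le\theta_\psi/|v|\to0$ once $\theta_\psi<\infty$), never the full supremum $\theta_\psi$, which may be attained at finitely many vertices and, as above, may strictly exceed the operator norm. The estimate that is actually provable—and that your proposal, pushed to its conclusion, does prove—is $\|M_\psi\|\ge\max\bigl\{\o_\psi,\;\sup_{v\in T^*}|v|(|v|-1)D\psi(v)\bigr\}$. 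In this respect your write-up is more careful than the paper's.
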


\begin{proof} $(a)\Longrightarrow (c)$ Assume $M_\psi$ is bounded from $\Lip$ to $\Lip_{\textbf{w}}$. The function $f_o=\frac12\chi_o\in \Lip$ and $\left\|f_o\right\|_\Lip=1$. Thus
\ben |\psi(o)|=\|\psi f_o\|_{\textbf{w}}\le \|M_\psi\|.\label{psi0}\eeqn

Next, fix $v\in T^*$. Then $\chi_v\in\Lip$ and $\|\chi_v\|_\Lip=1$; so
\ben (|v|+1)|\psi(v)|=\|\psi \chi_v\|_{\textbf{w}}\le \|M_\psi\|.\label{T*}\eeqn
Taking the supremum over all $v\in T$, from (\ref{psi0}) and (\ref{T*}) we see that $\o_\psi$ is finite and
\ben \o_\psi\le \|M_\psi\|.\label{low1}
\eeqn

With $v\in T^*$, we now define
$$f_v(w)=\begin{cases} |w| &\hbox{ if }|w|< |v|,\\
 |v| &\hbox{ if }|w|\ge |v|.\end{cases}$$
Then $f_v\in\Lip$, $f_v(o)=0$ and $\|f_v\|_\Lip=1$. By the boundedness of $M_\psi$ we obtain
\ben \|M_\psi\|&\ge& \|M_\psi f_v\|_{\textbf{w}}\ge \sup_{1\le |w|\le |v|}|w||\psi(w)|w|-\psi(w^-)(|w|-1)|\nonumber\\
&\ge &\sup_{1\le |w|\le |v|}|w|^2D\psi(w)-\sup_{1\le |w|\le |v|}|w||\psi(w^-)|,\nonumber\eeqn
Therefore $$|v|^2D\psi(v)\le \sup_{1\le|w|\le |v|}|w|^2D\psi(w)\le \|M_\psi\|+\o_\psi.$$
Taking the supremum over all $v\in T^*$, we obtain $\theta_\psi<\infty.$ From this and (\ref{low1}), we deduce the lower estimate $$\|M_\psi\|\ge \max\{\theta_\psi,\o_\psi\}.$$

$(c)\Longrightarrow (a)$ Assume $\theta_\psi$ and $\o_\psi$ are finite. Then, $\psi\in\Lip_{\textbf{w}}$ and by Lemma~\ref{old}, for $f\in\Lip$ with $\|f\|_\Lip=1$ and $v\in T^*$, we have
\ben |v|D(\psi f)(v)&\le& |v|D\psi (v)|f(v)|+|v||\psi(v^-)|Df(v)\nonumber\\
&\le & |v |D\psi(v)|f(o)|+|v|^2 D\psi(v)\|Df\|_\infty+\o_\psi\|Df\|_\infty\nonumber\\
&\le & |v |D\psi(v)|f(o)|+(\theta_\psi+\o_\psi)\|Df\|_\infty.
\nonumber\eeqn
Thus, $\psi f\in\Lip_{\textbf{w}}$.  Note that $|f(o) + \|Df\|_\infty=1$ and
$$\|\psi\|_{\textbf{w}}=|\psi(o)|+\sup_{v\in T^*}|v|D\psi(v)\le \o_\psi+\sup_{v\in T^*}|v|^2 D\psi(v)=\o_\psi+\theta_\psi.$$  From this, we have
$$\|\psi f\|_{\textbf{w}}\le \|\psi\|_{\textbf{w}}|f(o)|+(\theta_\psi+\o_\psi)\|Df\|_\infty\le \theta_\psi+\o_\psi,$$
proving the boundedness of $M_\psi:\Lip\to \Lip_{\textbf{w}}$ and the upper estimate
$$\|M_\psi\|\le \theta_\psi+\o_\psi.$$

$(b)\Longrightarrow (c)$ The proof is the same as for $(a)\Longrightarrow (c)$, since for $v\in T^*$, the functions $\chi_v$ and $f_v$ used there belong to $\Lip_0$.

$(c)\Longrightarrow (b)$ Assume $\theta_\psi$ and $\omega_\psi$ are finite and let $f\in\Lip_{0}$. Then, by Lemma~\ref{old}, for $v\in T^*$, we have
\ben |v|D(\psi f)(v)&\le &|v|D\psi(v)|f(v)|+|v||\psi(v^-)|Df(v)\nonumber\\
&\le &|v|^2D\psi(v)\frac{|f(v)|}{|v|}+|v||\psi(v^-)|Df(v)\nonumber\\
&\le&\theta_\psi\frac{|f(v)|}{|v|}+\o_\psi Df(v)\to 0\nonumber\eeqn
as $|v|\to\infty$. Thus, $\psi f\in\Lip_{{\textbf{w}},0}.$ The proof of the boundedness of $M_\psi$ is similar to that in $(c)\Longrightarrow (a)$. %and of the inequality $\|M_\psi f\|_\Lip\le \t_\psi+\max\{\s_\psi,|\psi(o)|\}$ are similar to those in $(a)\Longrightarrow (c)$.
\end{proof}

\subsection{Isometries}
In this section, we show there are no isometric multiplication operators $M_\psi$ from the space $\Lip$ to $\Lip_{\textbf{w}}$ or from $\Lip_0$ to $\Lip_{\textbf{w},0}$.

Suppose $M_\psi:\Lip\to\Lip_{\textbf{w}}$ is an isometry. Then
$\|\psi\|_{\textbf{w}}=\|M_\psi 1\|_{\textbf{w}}=1.$ On the other hand, $$|\psi(o)|=\frac12\left\|\psi\chi_o\right\|_{\textbf{w}}=\frac12\left\|\chi_o\right\|_\Lip=1.$$
Thus $\sup\limits_{v\in T^*}|v|D\psi(v)=\|\psi\|_{\textbf{w}}-|\psi(o)|=0$, which implies that $\psi$ is a constant of modulus 1. Now observe that for $v\in T^*$, we have
$$1=\|\chi_v\|_\Lip=\|M_\psi\chi\|_{\textbf{w}}=(|v|+1)|\psi(v)|=|v|+1,$$
which is a contradiction. Since $\chi_v\in\Lip_0$ for all $v \in T$, if $M_\psi:\Lip_0\to\Lip_{\textbf{w},0}$ is an isometry, then the above argument yields again a contradiction. Thus, we proved the following result.

\begin{theorem}\label{noiso3} There are no isometries $M_\psi$ from $\Lip$ to {\rm $\Lip_{\textbf{w}}$} or from $\Lip_0$ to {\rm $\Lip_{{\textbf{w}},0}$}.
\end{theorem}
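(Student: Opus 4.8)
The plan is to mirror the strategy used for Theorem~\ref{noiso2}, exploiting a small family of explicit test functions to force $\psi$ to be a unimodular constant and then to derive a contradiction. First I would suppose $M_\psi:\Lip\to\Lip_{\textbf{w}}$ is an isometry and feed it the constant function $1$: since $\|1\|_\Lip=1$, the isometry hypothesis gives $\|\psi\|_{\textbf{w}}=\|M_\psi 1\|_{\textbf{w}}=1$. Next I would apply $M_\psi$ to the point mass at the root. A direct computation of the two norms---$\|\chi_o\|_\Lip=2$ on one side, and $\|\psi\chi_o\|_{\textbf{w}}=2|\psi(o)|$ on the other, the weighted norm picking up the contribution $|v|D(\psi\chi_o)(v)=|\psi(o)|$ at the neighbors of $o$---forces $|\psi(o)|=1$.

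Combining these two facts through the definition $\|\psi\|_{\textbf{w}}=|\psi(o)|+\sup_{v\in T^*}|v|D\psi(v)$ yields $\sup_{v\in T^*}|v|D\psi(v)=0$, so $D\psi\equiv 0$; hence $\psi$ is constant, and by the previous step $|\psi|\equiv 1$. The contradiction then comes from testing against $\chi_v$ for an arbitrary $v\in T^*$: one checks $\|\chi_v\|_\Lip=1$, while the weighted norm of $\psi\chi_v$ is governed by the edges descending from $v$, giving $\|\psi\chi_v\|_{\textbf{w}}=(|v|+1)|\psi(v)|=|v|+1$. Isometry would then force $|v|+1=1$, i.e. $|v|=0$, which is impossible for $v\in T^*$.

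To dispose of the little-space case I would observe that all three test functions ($1$, $\chi_o$, and $\chi_v$) have differences supported on finitely many edges and therefore lie in $\Lip_0$, with images landing in $\Lip_{{\textbf{w}},0}$; thus the identical chain of equalities applies verbatim to an isometry $M_\psi:\Lip_0\to\Lip_{{\textbf{w}},0}$, and no separate argument is needed.

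The step I expect to require the most care is the norm bookkeeping for the point masses. The essential feature is that, unlike in Theorem~\ref{noiso2} where the weight $(|v|+1)$ was divided out in the \emph{domain} norm, here it survives in the \emph{codomain} norm $\|\cdot\|_{\textbf{w}}$ and is precisely what produces the factor $|v|+1$. Verifying that the supremum defining $\|\psi\chi_v\|_{\textbf{w}}$ is attained at the children of $v$ (where the length weight is $|v|+1$) rather than at $v$ itself (where it is only $|v|$) is the small computation on which the whole contradiction hinges.
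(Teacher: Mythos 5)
Your proposal is correct and follows essentially the same argument as the paper: both use the test functions $1$, $\chi_o$, and $\chi_v$ to force $\|\psi\|_{\textbf{w}}=1$ and $|\psi(o)|=1$, conclude that $\psi$ is a unimodular constant, and then obtain the contradiction $1=(|v|+1)|\psi(v)|=|v|+1$, with the little-space case handled by noting the same test functions lie in $\Lip_0$. Your explicit verification that the supremum defining $\|\psi\chi_v\|_{\textbf{w}}$ is attained at the children of $v$ (which exist since the tree has no terminal vertices) is a detail the paper leaves implicit, but the route is identical.
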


%%%%%%%%%%%%%%%%%%%%%%%%%%%%%%%%%%%%%%%%%%%%%%%%%%%%%%%%%%%%%%%%%%%%%%%%%%%%%%%%%%%%%%%%%%%
\subsection{Compactness and Essential Norm}
We now characterize the compact multiplication operators, but first we first give a useful compactness criterion for multiplication operators from $\Lip$ to $\Lip_{\textbf{w}}$ or from $\Lip_0$ to $\Lip_{\textbf{w},0}$.

\begin{lemma}\label{compact lemma section 3} A bounded multiplication operator $M_\psi$ from $\mathcal{L}$ to {\rm $\mathcal{L}_{\textbf{w}}$} (or from $\mathcal{L}_0$ to {\rm $\mathcal{L}_{\textbf{w},0}$}) is compact if and only if for every bounded sequence $\{f_n\}$ in $\mathcal{L}$ (respectively, $\mathcal{L}_0$) converging to 0 pointwise, the sequence {\rm $\|\psi f_n\|_{\textbf{w}}$} converges to 0 as $n \to \infty$.\end{lemma}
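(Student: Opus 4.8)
The plan is to run the argument of Lemma~\ref{compact lemma section 2} with the roles of the two spaces interchanged: here the domain is the ordinary Lipschitz space $\mathcal{L}$ and the codomain is the weighted space $\mathcal{L}_{\textbf{w}}$, whereas there it was the opposite. The two structural facts that powered that proof reappear, each now drawn from the appropriate growth estimate. First, convergence in the $\mathcal{L}_{\textbf{w}}$-norm forces pointwise convergence: by Lemma~\ref{new}(a), for $h\in\mathcal{L}_{\textbf{w}}$ and $v\in T^*$ one has $|h(v)|\le(1+\log|v|)\|h\|_{\textbf{w}}$, while trivially $|h(o)|\le\|h\|_{\textbf{w}}$. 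Dually, an $\mathcal{L}$-bounded set is pointwise bounded on finite subtrees: by Lemma~\ref{old}(a), $\|g\|_\mathcal{L}\le1$ gives $|g(v)|\le1+|v|$. These take over, respectively, the roles that Lemma~\ref{old}(a) and Lemma~\ref{new}(a) played in the section-2 argument.

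For the forward implication, suppose $M_\psi:\mathcal{L}\to\mathcal{L}_{\textbf{w}}$ is compact and let $\{f_n\}\subset\mathcal{L}$ satisfy $\|f_n\|_\mathcal{L}\le1$ and $f_n\to0$ pointwise. By compactness, some subsequence $\{\psi f_{n_k}\}$ converges in $\mathcal{L}_{\textbf{w}}$ to a limit $h$. The growth estimate above converts this into pointwise convergence $\psi f_{n_k}\to h$, and since $f_{n_k}\to0$ pointwise we conclude $h\equiv0$, whence $\|\psi f_{n_k}\|_{\textbf{w}}\to0$. As every subsequence of $\{\psi f_n\}$ has a further subsequence that is $\mathcal{L}_{\textbf{w}}$-null, the full sequence is $\mathcal{L}_{\textbf{w}}$-null. (In the little-oh case one may shortcut this step by noting that compact operators are completely continuous and that, by Lemma~\ref{weakconv_Lip}, a norm-bounded pointwise-null sequence in $\mathcal{L}_0$ is weakly null.)

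For the converse, assume the stated condition and take $\{g_n\}\subset\mathcal{L}$ with $\|g_n\|_\mathcal{L}\le1$. Pointwise boundedness on finite subtrees lets us extract, by a diagonal argument, a subsequence $\{g_{n_k}\}$ converging pointwise to some function $g$. Passing to the limit in $Dg_{n_k}(v)=|g_{n_k}(v)-g_{n_k}(v^-)|\le1$ gives $\|Dg\|_\infty\le1$, so $g\in\mathcal{L}$. Then $f_k=g_{n_k}-g$ is $\mathcal{L}$-bounded and converges to $0$ pointwise, so by hypothesis $\|\psi f_k\|_{\textbf{w}}\to0$, that is, $\psi g_{n_k}\to\psi g$ in $\mathcal{L}_{\textbf{w}}$. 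Hence the image of every bounded sequence has a convergent subsequence, and $M_\psi$ is compact.

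The delicate point, and the one I expect to require the most care, is the passage to the little spaces $\mathcal{L}_0$ and $\mathcal{L}_{\textbf{w},0}$. The forward direction transfers verbatim. In the converse, however, the pointwise limit $g$ of a norm-bounded sequence in $\mathcal{L}_0$ need \emph{not} lie in $\mathcal{L}_0$: a sequence that is eventually constant on sectors (hence in $\mathcal{L}_0$) can converge pointwise to a function with $\limsup_{|v|\to\infty}Dg(v)>0$, so one cannot apply the $\mathcal{L}_0$-hypothesis directly to $f_k=g_{n_k}-g$. I would circumvent this by observing that it suffices to show $\{\psi g_{n_k}\}$ is Cauchy in $\mathcal{L}_{\textbf{w}}$, since each $\psi g_{n_k}$ lies in the \emph{closed} subspace $\mathcal{L}_{\textbf{w},0}$ and so does any $\mathcal{L}_{\textbf{w}}$-limit; the limit $g$ still belongs to $\mathcal{L}$, and the difference $f_k=g_{n_k}-g$ is $\mathcal{L}$-bounded and pointwise-null, so what is really needed is that $\|\psi f_k\|_{\textbf{w}}\to0$ for such $\mathcal{L}$-sequences. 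This last step follows from the $\mathcal{L}\to\mathcal{L}_{\textbf{w}}$ statement already proved, using that finiteness of $\theta_\psi$ and $\o_\psi$ (Theorem~\ref{liptolipm}) makes boundedness of $M_\psi$ equivalent on the two pairs of spaces; equivalently, one checks that $M_\psi:\mathcal{L}_0\to\mathcal{L}_{\textbf{w},0}$ and $M_\psi:\mathcal{L}\to\mathcal{L}_{\textbf{w}}$ are simultaneously compact, reducing the little-oh criterion to the one established above.
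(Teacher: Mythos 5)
Your treatment of the pair $(\mathcal{L},\mathcal{L}_{\textbf{w}})$ is correct and is essentially the paper's own argument: Lemma~\ref{new}(a) turns $\mathcal{L}_{\textbf{w}}$-norm convergence into pointwise convergence (forward direction), and Lemma~\ref{old}(a) gives the local boundedness needed to extract a pointwise limit $g\in\mathcal{L}$ and apply the hypothesis to $g_{n_k}-g$ (converse). You are also right that the little-space case hides a genuine subtlety which the paper dismisses with ``the proof is similar'': the pointwise limit $g$ of an $\mathcal{L}$-bounded sequence in $\mathcal{L}_0$ need not lie in $\mathcal{L}_0$, so $g_{n_k}-g$ is not a legal test sequence for the $\mathcal{L}_0$-hypothesis.

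However, your repair of that step does not work as stated. You reduce it to the claim that $M_\psi:\mathcal{L}_0\to\mathcal{L}_{\textbf{w},0}$ and $M_\psi:\mathcal{L}\to\mathcal{L}_{\textbf{w}}$ are ``simultaneously compact,'' or that the $\mathcal{L}_0$-sequential hypothesis transfers to $\mathcal{L}$-sequences. Neither is available at this point: Theorem~\ref{liptolipm} gives the equivalence of \emph{boundedness} on the two pairs of spaces, which says nothing about compactness or about transferring the sequential condition; and the simultaneous-compactness statement is exactly the equivalence (a)$\Leftrightarrow$(b) of Theorem~\ref{compact-section3}, whose proof relies on both versions of the present lemma, so invoking it here is circular. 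The gap is closed by the first half of your own idea (Cauchy sequence plus closedness of $\mathcal{L}_{\textbf{w},0}$), provided you feed the hypothesis \emph{pairwise differences} rather than differences with $g$: the functions $g_{n_k}-g_{n_j}$ do lie in $\mathcal{L}_0$, have $\mathcal{L}$-norm at most $2$, and tend to $0$ pointwise as $\min\{k,j\}\to\infty$. If $\{\psi g_{n_k}\}$ were not Cauchy in $\mathcal{L}_{\textbf{w}}$, you could choose $\e>0$ and indices $k_m,j_m\to\infty$ with $\|\psi(g_{n_{k_m}}-g_{n_{j_m}})\|_{\textbf{w}}\ge\e$, contradicting the hypothesis applied to the bounded, pointwise-null $\mathcal{L}_0$-sequence $h_m=g_{n_{k_m}}-g_{n_{j_m}}$. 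Hence $\{\psi g_{n_k}\}$ is Cauchy in the complete space $\mathcal{L}_{\textbf{w}}$, and its limit lies in the closed subspace $\mathcal{L}_{\textbf{w},0}$ because every term does; this yields compactness of $M_\psi:\mathcal{L}_0\to\mathcal{L}_{\textbf{w},0}$ with no appeal to later results.
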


\begin{proof}
Suppose $M_\psi$ is compact from $\mathcal{L}$ to $\mathcal{L}_{\textbf{w}}$ and $\{f_n\}$ is a bounded sequence in $\mathcal{L}$ converging to 0 pointwise.  Without loss of generality, we may assume $\|f_n\|_\mathcal{L} \leq 1$ for all $n \in \N$.  Since $M_\psi$ is compact, the sequence $\{\psi f_n\}$ has a subsequence $\{\psi f_{n_k}\}$ that converges in the $\mathcal{L}_{\textbf{w}}$-norm to some function $f \in \mathcal{L}_{\textbf{w}}$.

By Lemma~\ref{new}, for $v \in T^*$ we have
$$|(\psi(v)f_{n_k}(v) - f(v)| \leq (1+\log |v|)\|\psi f_{n_k} - f\|_{\textbf{w}}.$$  Thus, $\psi f_{n_k} \to f$ pointwise on $T^*$.  Furthermore, since $|(\psi(o)f_{n_k}(o) - f(o)| \leq \|\psi f_{n_k} - f\|_{\textbf{w}}$, $\psi(0)f_{n_k}(0) \to f(0)$ as $k\to\infty$. Thus $\psi f_{n_k} \to f$ pointwise on $T$. Since by assumption, $f_n \to 0$ pointwise, it follows that $f$ is identically 0, and thus $\|\psi f_{n_k}\|_{\textbf{w}} \to 0$.  Since 0 is the only limit point in $\mathcal{L}_{\textbf{w}}$ of the sequence $\{\psi f_n\}$, we deduce that $\|\psi f_n\|_{\textbf{w}} \to 0$ as $n \to \infty$.

Conversely, suppose that every bounded sequence $\{f_n\}$ in $\mathcal{L}$ that converges to 0 pointwise has the property that $\|\psi f_n\|_{\textbf{w}} \to 0$ as $n \to \infty$.  Let $\{g_n\}$ be a sequence in $\mathcal{L}$ such that $\|g_n\|_\mathcal{L} \leq 1$ for all $n \in \N$.  Then $|g_n(o)| \leq 1$, and by part (a) of Lemma \ref{old}, for $v \in T^*$ we have $|g_n(v)| \leq |v|$.  So $\{g_n\}$ is uniformly bounded on finite subsets of $T$.  Thus there is a subsequence $\{g_{n_k}\}$, which converges pointwise to some function $g$.

Fix $\varepsilon > 0$ and $v \in T^*$.  Then $|g_{n_k}(v) - g(v)| < \frac{\varepsilon}{2}$ as well as $|g_{n_k}(v^-)-g(v^-)| < \frac{\varepsilon}{2}$ for $k$ sufficiently large.  Therefore, for all $k$ sufficiently large, we have
$$Dg(v) \leq |g(v)-g_{n_k}(v)| + |g_{n_k}(v^-) - g(v^-)| + Dg_{n_k}(v) < \varepsilon + Dg_{n_k}(v).$$  Thus $g \in \mathcal{L}$.  The sequence $f_{n_k} = g_{n_k} - g$ is bounded in $\mathcal{L}$ and converges to 0 pointwise. So $\|\psi f_{n_k}\|_{\textbf{w}} \to 0$ as $k \to \infty$.  Thus $\psi g_{n_k} \to \psi g$ in the $\mathcal{L}_{\textbf{w}}$-norm. Therefore, $M_\psi$ is compact.

The proof for the case of $M_\psi:\mathcal{L}_0\to \mathcal{L}_{\textbf{w},0}$ is similar.
\end{proof}

\begin{theorem}\label{compact-section3} Let $M_\psi$ be a bounded multiplication operator from $\mathcal{L}$ to {\rm $\mathcal{L}_{\textbf{w}}$} (or equivalently from $\mathcal{L}_0$ to {\rm $\mathcal{L}_{{\textbf{w}},0}$}).  Then the following are equivalent:
\begin{enumerate}
\item[(a)] {\rm $M_\psi:\mathcal{L} \to \mathcal{L}_{\textbf{w}}$} is compact.
\item[(b)] {\rm $M_\psi:\mathcal{L}_0 \to \mathcal{L}_{{\textbf{w}},0}$} is compact.
\item[(c)] $\displaystyle\lim_{|v| \to \infty} |v|^2D\psi(v) = 0$ and $\displaystyle\lim_{|v| \to \infty} (|v|+1)|\psi(v)| = 0$.
\end{enumerate}\end{theorem}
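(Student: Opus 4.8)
The plan is to mirror the proof of Theorem~\ref{compactness2}, replacing the compactness criterion of Lemma~\ref{compact lemma section 2} with its counterpart Lemma~\ref{compact lemma section 3} and interchanging the roles of the two spaces. As before, the equivalence $(b)\Longleftrightarrow(c)$ will come for free once $(a)\Longleftrightarrow(c)$ is established, since all the test functions I use lie in $\Lip_0$ and Lemma~\ref{compact lemma section 3} applies verbatim to $M_\psi:\Lip_0\to\Lip_{\textbf{w},0}$. So the real content is the equivalence of $(a)$ and $(c)$.

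For $(a)\Longrightarrow(c)$, I would extract each of the two limits from a separate family of unit-norm test functions tending to $0$ pointwise, on which Lemma~\ref{compact lemma section 3} forces $\|\psi f_n\|_{\textbf{w}}\to 0$. For the first limit, given any sequence $\{v_n\}$ with $|v_n|\to\infty$, take $f_n=\chi_{v_n}$; then $\|f_n\|_\Lip=1$, $f_n\to0$ pointwise, and a direct computation (as in the proof of Theorem~\ref{liptolipm}) gives $\|\psi\chi_{v_n}\|_{\textbf{w}}=(|v_n|+1)|\psi(v_n)|$, so this quantity tends to $0$. For the second limit I need functions that still vanish pointwise yet reach a value of order $|v_n|$ at $v_n$; concretely I would take
\begin{equation*}
g_n(v)=\begin{cases} 0 & \text{if } |v|\le\sqrt{|v_n|},\\ |v|-\lfloor\sqrt{|v_n|}\rfloor & \text{if } \sqrt{|v_n|}<|v|<|v_n|-1,\\ |v_n|-1-\lfloor\sqrt{|v_n|}\rfloor & \text{if } |v|\ge|v_n|-1. \end{cases}
\end{equation*}
This $g_n$ lies in $\Lip_0$ with $\|g_n\|_\Lip=1$ (its successive differences are $0$ or $1$), converges to $0$ pointwise because the ramp starts at length $\sqrt{|v_n|}\to\infty$, and satisfies $g_n(v_n)=g_n(v_n^-)=|v_n|-1-\lfloor\sqrt{|v_n|}\rfloor$. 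Hence $\|\psi g_n\|_{\textbf{w}}\ge |v_n|D(\psi g_n)(v_n)=|v_n|\bigl(|v_n|-1-\lfloor\sqrt{|v_n|}\rfloor\bigr)D\psi(v_n)$, and since the factor $(|v_n|-1-\lfloor\sqrt{|v_n|}\rfloor)/|v_n|\to1$, the conclusion $|v_n|^2D\psi(v_n)\to0$ follows from Lemma~\ref{compact lemma section 3}.

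For $(c)\Longrightarrow(a)$, I would again invoke Lemma~\ref{compact lemma section 3}: taking a sequence $\{f_n\}$ with $\|f_n\|_\Lip\le s$ and $f_n\to0$ pointwise, I must show $\|\psi f_n\|_{\textbf{w}}\to0$. For $v\in T^*$ I bound $|v|D(\psi f_n)(v)\le |v|D\psi(v)|f_n(v)|+|v||\psi(v^-)|Df_n(v)$, then use Lemma~\ref{old}(a) in the form $|f_n(v)|\le|f_n(o)|+|v|\|Df_n\|_\infty$ together with the identity $|v||\psi(v^-)|=(|v^-|+1)|\psi(v^-)|$ (since $|v^-|+1=|v|$), and the fact that $|f_n(o)|\le s$ and $\|Df_n\|_\infty\le s$. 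This turns the right-hand side into a combination of $|v|^2D\psi(v)$ and $(|v^-|+1)|\psi(v^-)|$ times $s$, both of which can be made smaller than any prescribed $\e$ for $|v|\ge M$ by hypothesis $(c)$. On the finite ball $B_M=\{|v|\le M\}$ the sequences $f_n$ and $Df_n$ converge to $0$ uniformly while $|v|D\psi(v)$ and $|v||\psi(v^-)|$ stay bounded, so $\sup_{|v|\le M}|v|D(\psi f_n)(v)\to0$ and $|\psi(o)f_n(o)|\to0$. Splitting the supremum in $\|\psi f_n\|_{\textbf{w}}$ at $|v|=M$ then gives $\limsup_n\|\psi f_n\|_{\textbf{w}}\le\e$, and letting $\e\to0$ finishes the argument.

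I expect the construction of $g_n$ to be the main obstacle: one must place the ramp far enough out (starting length $\sqrt{|v_n|}$ works) that $g_n\to0$ pointwise, yet keep its plateau value comparable to $|v_n|$ and make it constant across the edge at $v_n$, so that $|v_n|D(\psi g_n)(v_n)$ recovers $|v_n|^2D\psi(v_n)$ up to a factor tending to $1$; verifying simultaneously that $\|g_n\|_\Lip$ stays bounded is the delicate point. The remaining estimates are routine manipulations of the two defining quantities $\theta_\psi$ and $\o_\psi$.
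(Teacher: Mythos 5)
Your proposal is correct and follows essentially the same route as the paper: both rely on the compactness criterion of Lemma~\ref{compact lemma section 3}, extract $(|v_n|+1)|\psi(v_n)|\to 0$ from (scaled) characteristic functions, extract $|v_n|^2D\psi(v_n)\to 0$ from a ramp-with-plateau test function that is constant across the edge at $v_n$, and prove the converse by splitting the supremum at a finite radius $M$. The only differences are inessential: your ramp has slope $1$ starting at $\sqrt{|v_n|}$ (so you need the harmless correction factor $(|v_n|-1-\lfloor\sqrt{|v_n|}\rfloor)/|v_n|\to 1$), whereas the paper uses a slope-$2$ ramp starting at $\lfloor |v_n|/2\rfloor$ whose plateau value is exactly $|v_n|$, and your estimate in $(c)\Rightarrow(a)$ puts the difference quotient on $\psi$ at $f_n(v)$ rather than $f_n(v^-)$.
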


\begin{proof}
$(a) \Longrightarrow (c)$  Suppose $M_\psi:\mathcal{L}\to \mathcal{L}_{\textbf{w}}$ is compact.  We need to show that if $\{v_n\}$ is a sequence in $T$ such that $2 \leq |v_n|$ increasing unboundedly, then $\displaystyle\lim_{n \to \infty} |v_n|^2D\psi(v_n) = 0$ and $\displaystyle\lim_{n \to \infty}(|v_n|+1)|\psi(v_n)| = 0$.  Let $\{v_n\}$ be such a sequence, and for $n \in \N$ define $f_n = \frac{|v_n|+1}{|v_n|}\chi_{v_n}$.  Clearly $f_n \to 0$ pointwise, and $\|f_n\|_{\mathcal{L}} \leq \frac{3}{2}$. Using Lemma~\ref{compact lemma section 3}, we see that
\ben \|\psi f_n\|_{\textbf{w}}\to 0 \ \hbox{ as } n\to\infty.\label{normto0}\eeqn

On the other hand, since $f_n(o) = 0$ for all $n \in \N$, we have
$$\|\psi f_n\|_{\textbf{w}} = \sup_{v \in T^*} |v|D(\psi f_n)(v) \geq |v_n|\left(\frac{|v_n| + 1}{|v_n|}\right)|\psi(v_n)| = (|v_n|+1)|\psi(v_n)|.$$  Hence $\displaystyle\lim_{n \to \infty} (|v_n|+1)|\psi(v_n)| = 0$.

Next, for $n \in \N$, define
$$g_n(v) = \begin{cases}
0 &\text{ if } |v| < \left\lfloor\frac{|v_n|}{2}\right\rfloor,\\
2|v| - |v_n| + 2 &\text{ if } \left\lfloor\frac{|v_n|}{2}\right\rfloor \leq |v| < |v_n|,\\
|v_n| &\text{ if } |v| \geq |v_n|.
\end{cases}$$  Then $g_n \to 0$ pointwise, and $\|g_n\|_\mathcal{L} = 2$.  Since $g_n(v_n) = g_n(v_n^-) = |v_n|$, we have $$
\|\psi g_n\|_w \geq |v_n||\psi(v_n)g_n(v_n) - \psi(v_n^-)g_n(v_n^-)| = |v_n|^2D\psi(v_n).$$  By Lemma \ref{compact lemma section 3} we obtain $\displaystyle\lim_{n \to \infty} |v_n|^2D\psi(v_n) \leq \lim_{n\to\infty} \|\psi g_n\|_{\text{w}} = 0$.

$(c) \Longrightarrow (a)$  Suppose $\displaystyle\lim_{|v|\to\infty} |v|^2D\psi(v) = 0$ and $\displaystyle\lim_{|v|\to\infty} (|v|+1)|\psi(v)| = 0$.  Assume $\psi$ is not identically zero, otherwise $M_\psi$ is trivially compact.  By Lemma~\ref{compact lemma section 3}, to prove that $M_\psi$ is compact, it suffices to show that if $\{f_n\}$ is a bounded sequence in $\mathcal{L}$ converging to 0 pointwise, then $\|\psi f_n\|_{\textbf{w}} \to 0$ as $n \to \infty$.  Let $\{f_n\}$ be such a bounded sequence, let $s = \displaystyle\sup_{v \in T} \|f_n\|_\mathcal{L}$, and fix $\e > 0$.  There exists $M \in \N$ such that $(|v|+1)|\psi(v)| < \frac{\e}{2s}$ and $|v|^2D\psi(v) < \frac{\e}{2s}$ for $|v| \geq M$.  For $v \in T^*$ and by Lemma \ref{old}, we have
$$\begin{aligned}
|v|D(\psi f_n)(v) &\leq |v||\psi(v)|Df_n(v) + |v|D\psi(v)|f_n(v^-)|\\
&\leq |v||\psi(v)|Df_n(v) + |v|D\psi(v)(|f_n(o)| + |v|\|Df_n\|_\infty)\\
&\leq (|v|+1)|\psi(v)|Df_n(v) + |v|^2D\psi(v)(|f_n(o)| + \|Df_n\|_\infty)\\
&=(|v|+1)|\psi(v)|Df_n(v) + |v|^2D\psi(v)\|f_n\|_\Lip.
\end{aligned}$$  Since $f_n \to 0$ uniformly on $\{v \in T : |v| \leq M\}$ as $n \to \infty$, so does $Df_n$.  So, on the set $\{v \in T : |v| \leq M\}$, $|v|D(\psi f_n)(v) \to 0$ as $n \to \infty$.  On the other hand, on $\{v \in T : |v| \geq M\}$, we have $$|v|D(\psi f_n)(v) \leq (|v|+1)|\psi(v)|Df_n(v) + |v|^2D\psi(v)\|f_n\|_\Lip < \varepsilon.$$  So $|v|D(\psi f_n)(v) \to 0$ as $n \to \infty$.  Since $f_n \to 0$ pointwise, $\psi(o)f_n(o) \to 0$ as $n\to\infty$.  Thus $\|\psi f_n\|_{\textbf{w}} \to 0$ as $n \to \infty$. The compactness of $M_\psi$ follows at once from Lemma~\ref{compact lemma section 3}.

The proof of the equivalence of $(b)$ and $(c)$ is analogous.
\end{proof}

For $\psi$ a function on $T$, define $$\begin{aligned}
\mathcal{A}(\psi) &= \lim_{n \to \infty} \sup_{|v|\geq n} |v||\psi(v)|,\\
\mathcal{B}(\psi) &= \lim_{n \to \infty} \sup_{|v|\geq n} |v|^2D\psi(v).
\end{aligned}$$

\begin{theorem} Let $M_\psi$ be a bounded multiplication operator from $\mathcal{L}$ to {\rm $\mathcal{L}_{\textbf{w}}$}.  Then $$\|M_\psi\|_e \geq \max\left\{\mathcal{A}(\psi), \frac{1}{2}\mathcal{B}(\psi)\right\}.$$\end{theorem}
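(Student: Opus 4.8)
The plan is to reuse the weakly-null-sequence technique from the preceding essential-norm theorem. Recall that if $\{f_n\}$ is a sequence in $\mathcal{L}_0$ with $\sup_n\|f_n\|_{\mathcal{L}}<\infty$ converging to $0$ pointwise, then by Lemma~\ref{weakconv_Lip} it converges to $0$ weakly in $\mathcal{L}_0$, hence weakly in $\mathcal{L}$ (every bounded functional on $\mathcal{L}$ restricts to one on $\mathcal{L}_0$). For any compact operator $K:\mathcal{L}\to\mathcal{L}_{\textbf{w}}$, complete continuity \cite{Conway:07} then gives $\|Kf_n\|_{\textbf{w}}\to 0$, so that $\|M_\psi-K\|\ge\limsup_{n\to\infty}\|(M_\psi-K)f_n\|_{\textbf{w}}\ge\limsup_{n\to\infty}\|M_\psi f_n\|_{\textbf{w}}$. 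Taking the infimum over all such $K$ yields $\|M_\psi\|_e\ge\limsup_{n\to\infty}\|M_\psi f_n\|_{\textbf{w}}$. It then remains to exhibit two admissible test sequences, one realizing $\mathcal{A}(\psi)$ and one realizing $\tfrac12\mathcal{B}(\psi)$; note that boundedness of $M_\psi$ forces $\o_\psi,\theta_\psi<\infty$, so both $\mathcal{A}(\psi)$ and $\mathcal{B}(\psi)$ are finite and the relevant suprema are genuine limsups.

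For the bound $\|M_\psi\|_e\ge\mathcal{A}(\psi)$, I would choose a sequence $\{v_n\}$ in $T$ with $|v_n|\to\infty$ and $|v_n||\psi(v_n)|\to\mathcal{A}(\psi)$, and set $f_n=\chi_{v_n}$. Then $f_n\in\mathcal{L}_0$, $f_n\to 0$ pointwise, and $\|f_n\|_{\mathcal{L}}=1$. A direct computation, exactly as in the proof of Theorem~\ref{liptolipm}, gives $\|\psi f_n\|_{\textbf{w}}=(|v_n|+1)|\psi(v_n)|$, since $D(\psi\chi_{v_n})$ is supported on $v_n$ and its children, and the supremum of $|v|D(\psi\chi_{v_n})(v)$ is attained at the children of $v_n$. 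Because $\tfrac{|v_n|+1}{|v_n|}\to1$, we obtain $\limsup_n\|M_\psi f_n\|_{\textbf{w}}=\mathcal{A}(\psi)$, and the framework above gives $\|M_\psi\|_e\ge\mathcal{A}(\psi)$.

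For the bound $\|M_\psi\|_e\ge\tfrac12\mathcal{B}(\psi)$, which we may assume to have $\mathcal{B}(\psi)>0$, I would pick $\{v_n\}$ with $2\le|v_n|\to\infty$ and $|v_n|^2D\psi(v_n)\to\mathcal{B}(\psi)$, and reuse the ``tent'' function from the proof of Theorem~\ref{compact-section3}:
\[
g_n(v)=\begin{cases}0 & |v|<\lfloor|v_n|/2\rfloor,\\ 2|v|-|v_n|+2 & \lfloor|v_n|/2\rfloor\le|v|<|v_n|,\\ |v_n| & |v|\ge|v_n|.\end{cases}
\]
This function satisfies $g_n\in\mathcal{L}_0$, $g_n\to0$ pointwise, $\|g_n\|_{\mathcal{L}}=2$, and crucially $g_n(v_n)=g_n(v_n^-)=|v_n|$. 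The last property makes the cross term collapse: $D(\psi g_n)(v_n)=|v_n|\,|\psi(v_n)-\psi(v_n^-)|=|v_n|D\psi(v_n)$, whence $\|\psi g_n\|_{\textbf{w}}\ge|v_n|D(\psi g_n)(v_n)=|v_n|^2D\psi(v_n)$. Normalizing by setting $\tilde g_n=g_n/2$ produces an admissible sequence with $\|\tilde g_n\|_{\mathcal{L}}=1$ and $\|\psi\tilde g_n\|_{\textbf{w}}\ge\tfrac12|v_n|^2D\psi(v_n)$, so the framework yields $\|M_\psi\|_e\ge\tfrac12\mathcal{B}(\psi)$. Combining the two estimates gives $\|M_\psi\|_e\ge\max\{\mathcal{A}(\psi),\tfrac12\mathcal{B}(\psi)\}$.

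The routine part is the $\mathcal{A}(\psi)$ estimate; the delicate part is the design of the test sequence for $\mathcal{B}(\psi)$. The competing requirements are that $g_n$ vanish near the root (needed for pointwise, hence weak, nullity), that it reach height $\approx|v_n|$ at $v_n$ so that $|v_n|^2D\psi(v_n)$ appears, and that it take equal values at $v_n$ and $v_n^-$ so that $D(\psi g_n)(v_n)$ reduces cleanly to $|v_n|D\psi(v_n)$ with no stray contribution from $\psi(v_n^-)$. Forcing the function up from $0$ to $|v_n|$ over a rise of length about $|v_n|/2$ makes its slope equal to $2$, so $\|g_n\|_{\mathcal{L}}=2$; the normalization by this factor is exactly what produces the constant $\tfrac12$ in the stated lower bound.
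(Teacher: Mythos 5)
Your proof is correct, and it rests on the same framework as the paper's: weakly null unit-norm test sequences in $\mathcal{L}_0$ (via Lemma~\ref{weakconv_Lip}) combined with complete continuity of compact operators, so that $\|M_\psi\|_e\ge\limsup_n\|M_\psi f_n\|_{\textbf{w}}$ for any admissible sequence. The differences lie in the test functions, and they are worth recording. For $\mathcal{A}(\psi)$ you use single-vertex characteristic functions $\chi_{v_n}$ along a sequence with $|v_n||\psi(v_n)|\to\mathcal{A}(\psi)$; the paper instead uses characteristic functions of the parity-separated annular sets $E_{n,k}$ and $O_{n,k}$ and a double limit (first $n\to\infty$, then $k\to\infty$). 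Your choice is simpler: the parity device exists only to prevent adjacent vertices in the support from turning $D(\psi\chi_E)(v)$ into a $D\psi(v)$-type term, and selecting a near-optimizing sequence of isolated vertices makes the whole apparatus unnecessary, since $\|\psi\chi_{v_n}\|_{\textbf{w}}=(|v_n|+1)|\psi(v_n)|$ already does the job. For $\tfrac12\mathcal{B}(\psi)$ you recycle the slope-$2$ linear tent from Theorem~\ref{compact-section3}, with $\mathcal{L}$-norm exactly $2$, while the paper builds a quadratic ramp $h_n(v)=(|v|+1)^2/|v_n|$ whose norm $\tfrac{2|v_n|-1}{|v_n|}$ tends to $2$; both constructions hinge on the same key feature you single out, namely $g_n(v_n)=g_n(v_n^-)$, which annihilates the $\psi(v_n^-)$ cross term, and both lose the same factor of $2$ upon normalization, so the two routes are of equal strength here. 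Finally, your remark that weak nullity in $\mathcal{L}_0$ transfers to $\mathcal{L}$ by restriction of functionals, so that complete continuity applies to compact operators $K:\mathcal{L}\to\mathcal{L}_{\textbf{w}}$ (the class actually appearing in the definition of $\|M_\psi\|_e$ for the stated theorem), is a point where your write-up is more careful than the paper's, which takes the infimum over compact operators between the little spaces and identifies it with $\|M_\psi\|_e$ without comment.
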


\begin{proof} Fix $k\in \N$ and for each $n\in \N$, consider the sets
\ben E_{n,k}&=&\{v\in T: n\le |v|\le kn, |v| \hbox{ even}\},\nonumber\\
O_{n,k}&=&\{v\in T: n\le |v|\le kn, |v| \hbox{ odd}\}.\nonumber\eeqn
Define the functions $f_{n,k} = \chi_{E_{n,k}}$ and $g_{n,k} = \chi_{O_{n,k}}$.  Then $f_{n,k},g_{n,k} \in \mathcal{L}_0$, $\|f_{n,k}\|_{\mathcal{L}}=\|g_{n,k}\|_{\mathcal{L}} = 1$, and $f_n$ and $g_{n,k} \to 0$ pointwise as $n\to\infty$. By Lemma~\ref{weakconv_Lip}, the sequences $f_{n,k}$ and $g_{n,k}$ approach 0 weakly in $\mathcal{L}_0$ as $n\to\infty$.  Let $\mathcal{K}_0$ be the set of compact operators from $\Lip_0$ to {\rm $\Lip_{{\textbf{w}},0}$}, and note that every operator in $\mathcal{K}_0$ is completely continuous. Thus, if $K \in \mathcal{K}_0$, then $\|Kf_{n,k}\|_{\textbf{w}} \to 0$ and $\|Kg_{n,k}\|_{\textbf{w}} \to 0$, as $n\to\infty$.

Therefore, if $K \in \mathcal{K}_0$, then
\ben \|M_\psi - K\| &\geq& \limsup_{n \to\infty}\|(M_\psi - K)f_{n,k}\|_{\textbf{w}} \nonumber\\&\geq &\limsup_{n \to\infty}
 \|M_\psi f_{n,k}\|_{\textbf{w}}\nonumber\\
&\ge &\limsup_{n \to\infty}\sup_{v\in E_{n,k}}(|v|+1)|\psi(v)|.\label{even}\eeqn

Similarly, \ben \|M_\psi - K\| \geq \limsup_{n \to\infty}\sup_{v\in O_{n,k}}(|v|+1)|\psi(v)|.\label{odd}\eeqn
Therefore, combining (\ref{even}) and (\ref{odd}), we obtain
$$\begin{aligned}
\|M_\psi\|_e &= \inf\{\|M_\psi - K\| : K \in \mathcal{K}_0\}\\
%&\geq \limsup_{n \to \infty} \|M_\psi f_n\|_{\textbf{w}}\\
&\ge \limsup_{n \to \infty} \sup_{kn\ge |v|\geq n} (|v|+1)|\psi(v)|\\
&\ge \limsup_{n \to \infty}\sup_{kn\ge |v|\geq n} |v||\psi(v)|.\end{aligned}$$
Letting $k\to\infty$, we obtain $\|M_\psi\|_e\ge \mathcal{A}(\psi).$

Next, we wish to show that $\|M_\psi\|_e \geq \frac12\mathcal{B}(\psi)$.  The result is clearly true if $\mathcal{B}(\psi) = 0$.  So assume there exists a sequence $\{v_n\}$ in $T$ such that $2 < |v_n| \to \infty$ as $n \to \infty$ and $$\lim_{n \to \infty} |v_n|^2D\psi(v_n) = \mathcal{B}(\psi).$$  For $n \in \N$, define $$h_n(v) = \begin{cases}
0 &\text{ if } v = o,\\
\frac{(|v|+1)^2}{|v_n|} &\text{ if } 1 \leq |v| < |v_n|,\\
|v_n| &\text{ if } |v|\geq |v_n|.
\end{cases}$$  Clearly, $h_n(o) = 0, h_n(v_n) = h_n(v_n^-) = |v_n|$, and
$$Dh_n(v) = \begin{cases}
\frac{4}{|v_n|} &\text{ if } |v|=1,\\
\frac{2|v|+1}{|v_n|} &\text{ if } 1 < |v| < |v_n|,\\
0 &\text{ if } |v| \geq |v_n|.
\end{cases}$$
The supremum of $Dh_n(v)$ is attained on the set $\{v \in T : |v| = |v_n|-1\}$.  Thus $\|h_n\|_\mathcal{L} = \frac{2|v_n|-1}{|v_n|} < 2$.  Define $g_n = \frac{h_n}{\|h_n\|_{\mathcal{L}}}$, and observe that $g_n \in \mathcal{L}_0$, $\|g_n\|_{\mathcal{L}} = 1$, and $g_n \to 0$ pointwise on $T$.  By Lemma~\ref{weakconv_Lip}, $g_n \to 0$ weakly in $\mathcal{L}_0$.  Thus $\|Kg_n\|_{\textbf{w}} \to 0$ as $n \to \infty$ for any $K \in \mathcal{K}_0$.

For each $n \in \N$, $g_n(v_n) = g_n(v_n^-) = \frac{|v_n|^2}{2|v_n|-1}$.  Thus
$$\begin{aligned}
|v_n|D(\psi g_n)(v_n) &= |v_n||\psi(v_n)g_n(v_n) - \psi(v_n^-)g_n(v_n^-)|\\
&= \frac{|v_n|}{2|v_n|-1}|v_n|^2D\psi(v_n).
\end{aligned}$$

We deduce that
$$\begin{aligned}
\|M_\psi\|_e &= \inf\{\|M_\psi - K\| : K \in \mathcal{K}_0\}\\
&\geq \limsup_{n \to \infty} \|(M_\psi-K)g_n\|_{\textbf{w}}\\
&\geq \limsup_{n \to \infty} \|M_\psi g_n\|_{\textbf{w}}\\
&\geq \lim_{n \to \infty}\sup_{v \in T^*} |v|D(\psi g_n)(v)\\
&\geq \lim_{n \to \infty} |v_n|D(\psi g_n)(v_n)\\
%&= \lim_{n \to \infty} |v_n||\psi(v_n)g_n(v_n) - \psi(v_n^-)g_n(v_n^-)|\\
&= \lim_{n \to \infty} \frac{|v_n|}{2|v_n|-1}|v_n|^2D\psi(v_n)\\
&\geq \frac{1}{2}\mathcal{B}(\psi).
\end{aligned}$$

Therefore, $$\|M_\psi\|_e \geq \max\left\{\mathcal{A}(\psi), \frac{1}{2}\mathcal{B}(\psi)\right\}.\;\qedhere$$
\end{proof}

	We next derive an upper estimate on the essential norm.

\begin{theorem}\label{upper_ess_est} Let $M_\psi$ be a bounded multiplication operator from $\mathcal{L}$ to $\mathcal{L}_{\textbf{w}}$.  Then $$\|M_\psi\|_e \leq \mathcal{A}(\psi) + \mathcal{B}(\psi).$$\end{theorem}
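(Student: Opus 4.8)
The plan is to adapt, in reverse, the approximation scheme used for the upper essential-norm estimate of Theorem~\ref{upp_est_ess}: I would manufacture a sequence of compact operators built by truncation and control $\|M_\psi - M_\psi K_n\|$. For each $n\in\N$, define $K_n$ on $\mathcal{L}$ by
$$(K_n f)(v) = \begin{cases} f(v) &\text{if } |v|\le n,\\ f(v_n) &\text{if } |v| > n,\end{cases}$$
where $v_n$ is the ancestor of $v$ of length $n$. One checks that $D(K_n f)(v)=Df(v)$ for $|v|\le n$ and $D(K_n f)(v)=0$ for $|v|>n$, so $K_n f\in\mathcal{L}$ with $\|K_n f\|_{\mathcal{L}}\le\|f\|_{\mathcal{L}}$; moreover the values of $K_n f$ are determined by the finite data $\{f(v):|v|\le n\}$, hence $K_n$ has finite-dimensional range and is a compact operator on $\mathcal{L}$. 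Since $M_\psi:\mathcal{L}\to\mathcal{L}_{\textbf{w}}$ is bounded, each $M_\psi K_n:\mathcal{L}\to\mathcal{L}_{\textbf{w}}$ is compact, and therefore $\|M_\psi\|_e\le\limsup_{n\to\infty}\|M_\psi - M_\psi K_n\|$.

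The core of the argument is estimating $\|M_\psi(I-K_n)\|$. Writing $J_n=I-K_n$, I note $J_n f(o)=0$, that $D(J_n f)(v)=0$ for $|v|\le n$ and $D(J_n f)(v)=Df(v)$ for $|v|>n$, and hence by Lemma~\ref{old}(a) applied to $J_n f\in\mathcal{L}$ that $|J_n f(v)|\le |v|\,\|Df\|_\infty\le |v|\,\|f\|_{\mathcal{L}}$. Then for $|v|>n$,
$$|v|D(\psi J_n f)(v) \le |v||J_n f(v)|D\psi(v)+|v||\psi(v^-)|D(J_n f)(v) \le |v|^2 D\psi(v)\|f\|_{\mathcal{L}}+|v||\psi(v^-)|Df(v).$$
Because $(\psi J_n f)(o)=0$, the quantity $\|M_\psi J_n f\|_{\textbf{w}}$ is the supremum of the left-hand side over $|v|>n$, so it is bounded by $\|f\|_{\mathcal{L}}$ times the sum of $\sup_{|v|\ge n}|v|^2 D\psi(v)$ (whose limit is $\mathcal{B}(\psi)$) and a term coming from $\sup_{|v|>n}|v||\psi(v^-)|$.

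The one point requiring care, and the place I expect the only real friction, is reconciling this second term with $\mathcal{A}(\psi)$, whose definition uses $|w||\psi(w)|$ rather than $(|w|+1)|\psi(w)|$. I would handle the index shift $v\mapsto v^-$ by rewriting $|v||\psi(v^-)|=(|v^-|+1)|\psi(v^-)|=\frac{|v^-|+1}{|v^-|}\,|v^-||\psi(v^-)|$ and observing that for $|v|>n$ one has $|v^-|=|v|-1\ge n$, whence
$$\sup_{|v|>n}|v||\psi(v^-)| \le \frac{n+1}{n}\sup_{|w|\ge n}|w||\psi(w)|.$$
The prefactor $\frac{n+1}{n}\to 1$, exactly as the analogous $\frac{|v|}{|v|-1}$-type factors vanish in the proof of Theorem~\ref{upp_est_ess}, so this contributes precisely $\mathcal{A}(\psi)$ in the limit with no spurious constant. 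Combining the two pieces gives
$$\|M_\psi - M_\psi K_n\| \le \sup_{|v|\ge n}|v|^2 D\psi(v)+\frac{n+1}{n}\sup_{|w|\ge n}|w||\psi(w)|,$$
and letting $n\to\infty$ yields $\|M_\psi\|_e\le\mathcal{B}(\psi)+\mathcal{A}(\psi)$, as claimed. The essential difficulty is thus purely bookkeeping: ensuring the $\pm1$ discrepancies between $|v|$, $|v^-|$, and the definition of $\mathcal{A}(\psi)$ are absorbed into factors tending to $1$.
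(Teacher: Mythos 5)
Your proposal is correct and follows essentially the same route as the paper's own proof: the identical truncation operators $K_n$, the decomposition $J_n = I - K_n$ with $(J_nf)(o)=0$ and $|J_nf(v)|\le |v|\,\|f\|_{\mathcal{L}}$, and the same two-term estimate yielding $\mathcal{B}(\psi)+\mathcal{A}(\psi)$ in the limit. Your two refinements—justifying compactness of $K_n$ via its finite-dimensional range rather than a sequential-compactness argument, and explicitly absorbing the $|v|\,|\psi(v^-)| = \frac{|v^-|+1}{|v^-|}\,|v^-|\,|\psi(v^-)|$ index shift into a factor $\frac{n+1}{n}\to 1$—are sound and in fact tidy up a step the paper passes over silently.
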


\begin{proof}  For each $n \in \N$, consider the operator $K_n$ defined by $$(K_nf)(v) = \begin{cases}
f(v) &\text{ if } |v| \leq n,\\
f(v_n) &\text{ if } |v| > n,
\end{cases}$$ for $f \in \mathcal{L}$, where $v_n$ is the ancestor of $v$ of length $n$.  Then $(K_n f)(o) = f(o)$, and $K_n f \in \mathcal{L}_{\textbf{w},0}$. Arguing as in the proof of Theorem~\ref{upp_est_ess}, by the boundedness of $M_\psi$, it follows that $M_\psi K_n$ is a compact operator from $\mathcal{L}$ to $\mathcal{L}_{\textbf{w}}$.

Define the operator $J_n = I-K_n$, where $I$ is  the identity operator $I$ on $\mathcal{L}$. Then,
$$D(J_n f)(v) \leq Df(v) \leq \|f\|_\mathcal{L}.$$  Since $(J_n f)(v) = 0$ for $|v|\le n$, by Lemma~\ref{old}, we obtain $$|(J_nf)(v)| \leq |v|\|f\|_\mathcal{L}.$$  From these two estimates, we arrive at
\ben
\|M_\psi J_n f\|_{\textbf{w}}&=& \sup_{|v|>n} |v|\left|\psi(v)(J_n f)(v) - \psi(v^-)(J_n f)(v^-)\right|\nonumber\\
&\leq &\sup_{|v|>n} \left[|v|D\psi(v)|(J_n f)(v)|+ |v||\psi(v^-)|D(J_n f)(v)\right]\nonumber\\
&\leq &\sup_{|v|>n} |v|^2D\psi(v)\frac{|(J_n f)(v)|}{|v|} + \sup_{|v|>n} |v||\psi(v^-)|D(J_n f)(v)|\nonumber\\
&\leq &\sup_{|v|>n} |v|^2D\psi(v)\|f\|_\mathcal{L} + \sup_{|v|>n} |v||\psi(v^-)|\|f\|_\mathcal{L}. \label{est_up}
\eeqn  Since $$\begin{aligned}
\|M_\psi\|_e &\leq \limsup_{n \to \infty} \|M_\psi - M_\psi K_n\|\\
&= \limsup_{n \to \infty} \sup_{\|f\|_\mathcal{L} = 1} \|(M_\psi - M_\psi K_n)f\|_{\textbf{w}}\\
&= \limsup_{n \to \infty} \sup_{\|f\|_\mathcal{L} = 1}\|M_\psi J_n f\|_{\textbf{w}},
\end{aligned}$$ from (\ref{est_up}), taking the limit as $n \to \infty$, we obtain
$$\|M_\psi\|_e \leq \mathcal{B}(\psi) + \mathcal{A}(\psi).\;\qedhere$$
\end{proof}

%%%%%%%%%%%%%%%%%%%%%%%%%%%%%%%%%%%%%%%%%%%%%%%%%%%%%%%%%%%%%%%%%%%%%%%%%%%%%%%%%%%%%%%%%%%%%
\section{Multiplication operators from $\Lip_{\textbf{w}}$ or $\Lip_{{\textbf{w}},0}$ to $L^\infty$}
In this section, we study the multiplication operators $M_\psi$ from the weighted Lipschitz space or the little weighted Lipschitz space into $L^\infty$.  We begin by characterizing the bounded operators and determining their operator norm. In addition, we characterize the bounded operators that are bounded from below and show that there are no isometries among them. Finally, we characterize the compact multiplication operators and determine the essential norm.

\subsection{Boundedness and Operator Norm}
For a function $\psi$ on $T$, define
$$\g_\psi=\max\left\{|\psi(o)|,\sup_{v\in T^*}(1+\log|v|)|\psi(v)|\right\}.$$

\begin{theorem}\label{boundedness} For a function $\psi$ on $T$, the following statements are equivalent:
\begin{enumerate}
\item[\rm{(a)}] {\rm $M_\psi:\Lip_{\textbf{w}}\to L^\infty$} is bounded.
\item[\rm{(b)}] {\rm $M_\psi:\Lip_{{\textbf{w}},0}\to L^\infty$} is bounded.
\item[\rm{(c)}] $\sup_{v\ne o}\log|v||\psi(v)|$ is finite.
\end{enumerate}
Furthermore, under the above conditions, we have $\|M_\psi\|=\g_\psi.$
\end{theorem}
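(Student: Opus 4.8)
The plan is to prove the three equivalences simultaneously with the two-sided estimate that collapses to the exact identity $\|M_\psi\|=\g_\psi$, organizing everything around the single quantity $\g_\psi$.

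\textbf{Reduction of (c).} First I would record that the finiteness of $\sup_{v\ne o}\log|v|\,|\psi(v)|$ is equivalent to the finiteness of $\g_\psi$, so that (c) and ``$\g_\psi<\infty$'' may be used interchangeably. Indeed, $|\psi(o)|$ is a single number; the local finiteness of $T$ forces only finitely many vertices of length $1$, so $\sup_{|v|=1}|\psi(v)|<\infty$ automatically; and for $|v|\ge 2$ one has $1\le \frac{1+\log|v|}{\log|v|}\le 1+\frac1{\log 2}$, whence the weighted supremum defining $\g_\psi$ and the supremum in (c) are finite together.

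\textbf{Upper bound, (c) $\Rightarrow$ (a).} Assuming $\g_\psi<\infty$, for $f\in\Lip_{\textbf{w}}$ I would estimate $\|\psi f\|_\infty=\sup_{v}|\psi(v)|\,|f(v)|$ vertex by vertex: at $v=o$ use $|f(o)|\le\|f\|_{\textbf{w}}$, and at $v\in T^*$ use Lemma~\ref{new}(a), namely $|f(v)|\le(1+\log|v|)\|f\|_{\textbf{w}}$. Taking the supremum over $v$ yields $\|\psi f\|_\infty\le\g_\psi\|f\|_{\textbf{w}}$, so $M_\psi:\Lip_{\textbf{w}}\to L^\infty$ is bounded with $\|M_\psi\|\le\g_\psi$; the same inequality restricts verbatim to $\Lip_{{\textbf{w}},0}$, giving the upper bound for (b) as well.

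\textbf{Lower bound, (a) $\Rightarrow$ (c).} For the reverse implication and the matching lower estimate I would test $M_\psi$ on explicit unit-norm functions. Applying $M_\psi$ to the constant function $1$ (for which $\|1\|_{\textbf{w}}=1$) gives $\|M_\psi\|\ge\|\psi\|_\infty\ge|\psi(o)|$. To capture the logarithmic weight, fix $v_0\in T^*$ and take the function $f$ that grows like the partial harmonic sum $\sum_{k=1}^{|w|}\frac1k$ along the geodesic $[o,v_0]$ and is held constant beyond $v_0$ and on every branch off that geodesic; a direct check shows $Df$ vanishes except along $[o,v_0]$, where $|w|Df(w)=1$, so $\|f\|_{\textbf{w}}=1$ and $f(v_0)=\sum_{k=1}^{|v_0|}\frac1k$. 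Hence $\|M_\psi\|\ge|\psi(v_0)|\sum_{k=1}^{|v_0|}\frac1k$, which forces the finiteness of (c). Since $f$ and the constant $1$ both lie in $\Lip_{{\textbf{w}},0}$, the same bounds follow from (b); combined with the trivial inclusion $\Lip_{{\textbf{w}},0}\subset\Lip_{\textbf{w}}$, this also settles (a) $\Leftrightarrow$ (b).

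\textbf{Main obstacle.} The step I expect to demand the most care is matching the lower-bound constant to the factor $1+\log|v|$ appearing in $\g_\psi$. The extremal functions above give the value $\sum_{k=1}^{|v_0|}\frac1k$ at $v_0$, and this obeys $\log(1+|v_0|)\le \sum_{k=1}^{|v_0|}\frac1k\le 1+\log|v_0|$, with the two outer quantities equal only at $|v_0|=1$. Reconciling this harmonic-sum constant with the weight $1+\log|v|$ so as to recover the \emph{exact} equality $\|M_\psi\|=\g_\psi$ (rather than merely $\|M_\psi\|\le\g_\psi$ together with a strictly smaller-looking lower bound) is the crux, and is where I would concentrate the quantitative estimation, taking the supremum over $v_0$ only after the constant has been pinned down.
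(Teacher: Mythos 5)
Your proof of the equivalence of (a), (b), and (c) is correct, and it rests on the same two pillars as the paper's: Lemma~\ref{new}(a) for the bound $\|\psi f\|_\infty\le\g_\psi\|f\|_{\textbf{w}}$, and explicit unit-norm test functions for the necessity of (c). The organization differs in two ways. The paper proves (b)$\Rightarrow$(a) directly, by truncating $f\in\Lip_{\textbf{w}}$ at level $n=|v|$ to get $f_n\in\Lip_{{\textbf{w}},0}$ with $\|f_n\|_{\textbf{w}}\le\|f\|_{\textbf{w}}$ and $f_n(v)=f(v)$, whereas you close the cycle (c)$\Rightarrow$(a)$\Rightarrow$(b)$\Rightarrow$(c) and avoid the truncation argument entirely. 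And where the paper uses the single global test function $f(v)=\log(1+|v|)$, you use localized harmonic-sum functions $f(w)=\sum_{k=1}^{\min\{|w|,|v_0|\}}\frac1k$; both are legitimate, and yours are in fact sharper.

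The ``main obstacle'' you flag, however, is not something finer estimation can overcome: the equality $\|M_\psi\|=\g_\psi$ is false as stated, and your own computation is exactly the reason why. Writing $H_n=\sum_{k=1}^n\frac1k$, your harmonic-sum functions are extremal: summing $Df$ along the geodesic from $o$ to $v$ (as in the proof of Lemma~\ref{new}(a)) gives $\sup\{|f(v)|:\|f\|_{\textbf{w}}\le1\}=H_{|v|}$ for $v\in T^*$, so interchanging the two suprema in the definition of the operator norm yields the exact formula $\|M_\psi\|=\max\bigl\{|\psi(o)|,\,\sup_{v\in T^*}H_{|v|}|\psi(v)|\bigr\}$. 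Since $H_n<1+\log n$ for every $n\ge 2$, this is in general strictly smaller than $\g_\psi$: for $\psi=\chi_{v_0}$ with $|v_0|=2$ one gets $\|M_\psi\|=H_2=\frac32$, while $\g_\psi=1+\log 2>\frac32$. The defect here is the paper's, not yours: the paper's test function only gives $\|M_\psi\|\ge\sup_{v\in T^*}\log(1+|v|)\,|\psi(v)|$ (its inequality (\ref{est2})), and its next assertion (\ref{estlower}), that $\|M_\psi\|\ge\g_\psi$, silently replaces $\log(1+|v|)$ by the strictly larger quantity $1+\log|v|$. So the honest conclusion of your argument --- and the correct version of the theorem --- is the exact formula with $H_{|v|}$, or equivalently the two-sided estimate $\max\{|\psi(o)|,\sup_{v\in T^*}\log(1+|v|)|\psi(v)|\}\le\|M_\psi\|\le\g_\psi$; you should not spend effort trying to pin the norm to $\g_\psi$ itself.
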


\begin{proof} The implication $(a)\Longrightarrow (b)$ is obvious.

$(b)\Longrightarrow (a)$: We begin by showing that for each $f\in\Lip_{\textbf{w}}$, the function $\psi f$ is bounded. Since $M_\psi$ is bounded on $\Lip_{{\textbf{w}},0}$, $\psi=M_\psi 1\in L^\infty$. Thus, if $f$ is constant, then $\psi f\in L^\infty$. Fix $f\in\Lip_{\textbf{w}}$, $f$ nonconstant, $v\in T$, and set $n=|v|$. For $w\in T$, define
$$f_n(w)=\begin{cases} f(w)& \quad \hbox{ if }|w|\le n\\
f(w_n)& \quad \hbox{ if }|w|> n\end{cases}$$
where $w_n$ is the ancestor of $w$ of length $n$. Then $f_n\in\Lip_{{\textbf{w}},0}$ and $\|f_n\|_{\textbf{w}}\le \|f\|_{\textbf{w}}$. Thus, $\psi f_n\in L^\infty$ and
$$\|\psi f_n\|_\infty\le \|M_\psi\|\,\|f\|_{\textbf{w}}.$$
So $|\psi(v)f(v)|=|\psi(v)f_n(v)|\le \|M_\psi\|\,\|f\|_{\textbf{w}}.$ Therefore $\psi f\in L^\infty$ and
$$\|\psi f\|_\infty\le \|M_\psi\|\,\|f\|_{\textbf{w}},$$
proving the boundedness of $M_\psi$ as an operator from $\Lip_{\textbf{w}}$ to $L^\infty$.

$(a)\Longrightarrow (c)$: Assume $M_\psi:\Lip_{\textbf{w}}\to L^\infty$ is bounded. Then $\psi=M_\psi 1\in L^\infty$ and
\ben \|M_\psi\|\ge \|\psi\|_\infty\ge |\psi(o)|.\label{est1}\eeqn
 For $v\in T$, define $f(v)=\log(1+|v|)$.
%\begin{cases} %0&\quad\hbox{ if }v=o,\\
%\log|v|&\quad\hbox{ if }v\ne o.\end{cases}$$
Then $f(o)=0$ and since for $x\ge 1$ the function $x\mapsto x\log\left(\frac{x+1}{x}\right)$ is increasing and has limit 1 as $x\to\infty$, $f\in\Lip_{\textbf{w}}$ and $\|f\|_{\textbf{w}}=1.$ Thus
\ben \|M_\psi\|\ge \|\psi f\|_\infty=\sup_{v\in T^*}\log(1+|v|)|\psi(v)|,\label{est2}\eeqn proving (c). Furthermore, from (\ref{est1}) and (\ref{est2}), we obtain
\ben \|M_\psi\|\ge \g_\psi.\label{estlower}\eeqn
%$|\psi(o)|\le\|\psi\|_\infty\le \|M_\psi\|.$ On the other hand, taking as a test function $f(v)=|v|$ and noting that $\|f\|_\Lip=1$, we have $$\|M_\psi\|\ge \|\psi f\|_\infty= \sup_{v\in T}|v||\psi(v)|.$$
%Thus, $\sup_{v\in T}|v||\psi(v)|<\infty$ and
% $\|M_\psi\|\ge \eta_\psi.$

$(c)\Longrightarrow (a)$: Assume $\displaystyle\sup_{v\ne o}\log|v||\psi(v)|<\infty.$ Let $f\in\Lip_{\textbf{w}}$ such that $\|f\|_{\textbf{w}}=1$. Then
$|\psi(o)f(o)|\le |\psi(o)|$ and  by Lemma~\ref{new}, for $v\in T^*$, we have
$$|\psi(v)f(v)|\le (1+\log|v|)|\psi(v)|\le \g_\psi.$$%\|\psi\|_\infty+\sup_{v\ne o}\log|v||\psi(v)|.$$
Thus, $\psi f\in L^\infty$ and
\ben \|\psi f\|_\infty \le \g_\psi,\label{est3}\eeqn proving the boundedness of $M_\psi$ as an operator from $\Lip_{\textbf{w}}$ to $L^\infty$.
Taking the supremum over all functions $f\in\Lip_{\textbf{w}}$ such that $\|f\|_{\textbf{w}}=1$, from (\ref{est3}) we obtain $\|M_\psi\|\le \g_\psi.$
Therefore, from (\ref{estlower}) we conclude that
$\|M_\psi\|=\g_\psi.$
\end{proof}

\subsection{Boundedness From Below}  Recall that an operator $S$ from a Banach space $\mathcal{X}$ to a Banach space $\mathcal{Y}$ is {\it{bounded below}} if there exists a constant $C>0$ such that for all $x\in X$ $$\|Sx\|\ge C\|x\|.$$

\begin{theorem}\label{bbelow} A bounded multiplication operator $M_\psi$ from {\rm $\Lip_{\textbf{w}}$} or {\rm $\Lip_{{\textbf{w}},0}$} to $L^\infty$ is bounded below if and only if $$\inf_{v\in T}\frac{|\psi(v)|}{|v|+1}>0.$$
\end{theorem}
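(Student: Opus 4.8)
The plan is to prove the equivalence directly: the forward implication rests on a family of normalized point-mass test functions, and the reverse implication on a single universal comparison between the $\Lip_{\textbf{w}}$-norm and the weighted sup-norm with weight $|v|+1$. Since every test function I use lies in $\Lip_{{\textbf{w}},0}$, both the $\Lip_{\textbf{w}}$ and the $\Lip_{{\textbf{w}},0}$ statements will be handled by the same computations.

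For the forward direction, I would assume $M_\psi$ is bounded below, so there is a constant $C>0$ with $\|\psi f\|_\infty\ge C\|f\|_{\textbf{w}}$ for every $f$ in the domain. The idea is to evaluate this on the characteristic functions $\chi_v$. A direct computation shows that for $v\in T^*$ the seminorm $\sup_{w\in T^*}|w|Df(w)$ is attained at the children of $v$, giving $\|\chi_v\|_{\textbf{w}}=|v|+1$, while $\|\chi_o\|_{\textbf{w}}=2$; in all cases $\|\psi\chi_v\|_\infty=|\psi(v)|$ because $\chi_v$ is supported at the single vertex $v$. Substituting into the bounded-below inequality yields $|\psi(v)|\ge C(|v|+1)$ for $v\in T^*$ and $|\psi(o)|\ge 2C$, so that $\inf_{v\in T}\frac{|\psi(v)|}{|v|+1}\ge C>0$, as desired.

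For the reverse direction, assume $c:=\inf_{v\in T}\frac{|\psi(v)|}{|v|+1}>0$. The crux is the estimate that for every $f\in\Lip_{\textbf{w}}$, writing $A_f=\sup_{v\in T}(|v|+1)|f(v)|$, one has $\|f\|_{\textbf{w}}\le 3A_f$. I would establish this by bounding the two pieces of the norm separately. First $|f(o)|\le A_f$ is immediate. Next, for $v\in T^*$, using $|v^-|+1=|v|$ together with $|f(v)|\le A_f/(|v|+1)$ and $|f(v^-)|\le A_f/(|v^-|+1)$,
\[
|v|Df(v)\le |v||f(v)|+|v||f(v^-)|\le \frac{|v|}{|v|+1}A_f+\frac{|v|}{|v^-|+1}A_f\le 2A_f,
\]
so that $\sup_{v\in T^*}|v|Df(v)\le 2A_f$ and hence $\|f\|_{\textbf{w}}\le |f(o)|+2A_f\le 3A_f$. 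With this in hand,
\[
\|\psi f\|_\infty=\sup_{v\in T}|\psi(v)||f(v)|\ge c\sup_{v\in T}(|v|+1)|f(v)|=cA_f\ge \frac{c}{3}\|f\|_{\textbf{w}},
\]
which exhibits $M_\psi$ as bounded below with constant $c/3$. (If $A_f=\infty$ for some $f$ the inequality is trivial, and this situation does not arise when $M_\psi$ is bounded, since then $\|\psi f\|_\infty<\infty$.)

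The only genuine content is the comparison $\|f\|_{\textbf{w}}\le 3A_f$; everything else is routine bookkeeping with point masses. The mild subtlety I expect to be the main point is recognizing that the correct weight to test against is exactly $|v|+1$ and that, because the parent of $v$ sits one level closer to the root (so $|v^-|+1=|v|$), the telescoping loss across an edge costs only a bounded factor. This is what makes boundedness-from-below detectable by the characteristic functions alone and pins the characterization to the single scalar quantity $\inf_{v\in T}\frac{|\psi(v)|}{|v|+1}$.
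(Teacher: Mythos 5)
Your proof is correct and follows essentially the same route as the paper: characteristic functions $\chi_v$ (the paper uses their normalizations $\frac{1}{|v|+1}\chi_v$) detect failure of boundedness below, and the reverse direction rests on exactly the same edge estimate $|v|Df(v)\le |v||f(v)|+(|v^-|+1)|f(v^-)|$, which the paper packages as a contradiction argument with a sequence $f_n$ and the auxiliary functions $g_n(v)=(|v|+1)f_n(v)$ rather than as your direct inequality $\|f\|_{\textbf{w}}\le 3A_f$. Your quantitative organization is slightly cleaner (it yields the explicit lower bound $c/3$), but the underlying ideas coincide.
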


\begin{proof} Assume $M_\psi$ is bounded below and, arguing by contradiction, assume there exists $v\in T$ such that $\psi(v)=0$. Then $M_\psi\chi_v$ is identically 0. Since operators that are bounded below are necessarily injective \cite{Conway:07}, it follows that $M_\psi$ is not bounded below. Therefore, if $M_\psi$ is bounded below, then $\psi$ is nonvanishing.

Next assume $\psi$ is nonvanishing and $\inf\limits_{v\in T}\frac{|\psi(v)|}{|v|+1}=0.$ Then, there exists a sequence $\{v_n\}$ in $T$ with $1\le |v_n|\to\infty$, such that $\frac{|\psi(v_n)|}{|v_n|+1}\to 0$ as $n\to\infty$. For $n\in\N$, define $f_n=\frac1{|v_n|+1}\chi_{v_n}$. Then $\|f_n\|_{\textbf{w}}=1$, but
$$\|\psi f_n\|_\infty=\frac{|\psi(v_n)|}{|v_n|+1}\to 0.$$
Thus, $M_\psi$ is not bounded below.

Conversely, assume $\inf\limits_{v\in T}\frac{|\psi(v)|}{|v|+1}=c>0$ and that $M_\psi$ is not bounded below. Then, for each $n\in\N$, there exists $f_n\in\;${\rm $\Lip_{\textbf{w}}$} such that $\|f_n\|_{\textbf{w}}=1$ and $\|\psi f_n\|_\infty<\frac{1}{n}$. Then, for each $v\in T$, we have
$$c(|v|+1)|f_n(v)|\le |\psi(v)f_n(v)|<\frac1n,$$ so that the sequence $\{g_n\}$ defined by $g_n(v)=(|v|+1)f_n(v)$ converges to 0 uniformly.

On the other hand, for $v\in T^*$, we have
\ben |v|Df_n(v)&=&\left|\frac{|v|}{|v|+1}g_n(v)-g_n(v^-)\right|\nonumber\\
&\le &|g_n(v)|+|g_n(v^-)|\to 0\nonumber\eeqn
uniformly as $n\to\infty$. Since $|\psi(o)f_n(o)|<1/n$, yet $\|f_n\|_{\textbf{w}}=1$, this yields a contradiction.
 \end{proof}

\subsection{Isometries}
In this section, we show there are no isometries among the multiplication operators from the spaces $\Lip_{\textbf{w}}$ or $\Lip_{\textbf{w},0}$ into $L^\infty$.

Suppose $M_\psi$ is an isometry from $\Lip_{\textbf{w}}$ or $\Lip_{{\textbf{w}},0}$ to $L^\infty$. Then, for $v\in T$ the function $f_v=\frac1{|v|+1}\chi_v$ is in $\Lip_{{\textbf{w}},0}$, $\|f_v\|_{\textbf{w}}=1$, and $$\frac1{|v|+1}|\psi(v)|=\|M_\psi f_v\|_\infty=\|f_v\|_{\textbf{w}}=1.$$ Thus, $|\psi(v)|=|v|+1$. On the other hand, since $M_\psi$ is bounded, by Theorem~\ref{boundedness}, we have $\lim\limits_{|v|\to\infty}|v|\log|v||\psi(v)=0$; so $\psi(v)\to 0$ as $|v|\to\infty$, which yields a contradiction.
Thus, we proved the following result.

\begin{theorem}\label{noiso} The are no isometric multiplication operators $M_\psi$ from {\rm $\Lip_{\textbf{w}}$} or {\rm $\Lip_{{\textbf{w}},0}$} to $L^\infty$.
\end{theorem}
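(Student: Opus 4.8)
The plan is to exhibit a single family of test functions whose behaviour under an isometry forces the symbol $\psi$ to \emph{grow}, while the mere boundedness of $M_\psi$ (automatic for any isometry) forces $\psi$ to \emph{decay}. Since every test function I intend to use lies in $\Lip_{{\textbf{w}},0}\subset\Lip_{\textbf{w}}$, the same computation will dispose of both the $\Lip_{\textbf{w}}$ and the $\Lip_{{\textbf{w}},0}$ cases at once, so I would not separate them.

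First I would record the decay consequence of boundedness. Any isometry satisfies $\|M_\psi x\|=\|x\|$, hence has norm $1$ and is in particular bounded; Theorem~\ref{boundedness} then applies, giving $\sup_{v\ne o}\log|v|\,|\psi(v)|<\infty$. Consequently there is a constant $C$ with $|\psi(v)|\le C/\log|v|$ for large $|v|$, so $\psi(v)\to 0$ as $|v|\to\infty$.

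Next I would extract the opposing growth consequence from the isometry hypothesis. For $v\in T^*$ set $f_v=\frac{1}{|v|+1}\chi_v$. A direct check of the weighted norm shows $f_v(o)=0$, while $|w|Df_v(w)$ equals $\frac{|v|}{|v|+1}<1$ on the edge into $v$, equals exactly $1$ on each edge from $v$ to a child, and vanishes on all other edges; hence $\|f_v\|_{\textbf{w}}=1$, and $f_v\in\Lip_{{\textbf{w}},0}$ because it is finitely supported. Since $\psi f_v$ is supported only at $v$, we have $\|M_\psi f_v\|_\infty=\frac{|\psi(v)|}{|v|+1}$. The isometry identity $\|M_\psi f_v\|_\infty=\|f_v\|_{\textbf{w}}=1$ then forces $|\psi(v)|=|v|+1$ for every $v\in T^*$.

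The two conclusions are incompatible: $|\psi(v)|=|v|+1\to\infty$ flatly contradicts $\psi(v)\to 0$, which completes the argument. I do not expect a genuine obstacle here; the only points requiring care are verifying that $\|f_v\|_{\textbf{w}}$ equals exactly $1$ (so that the isometry equality pins down $|\psi(v)|$ precisely rather than merely up to a constant) and noting that boundedness alone already suppresses $\psi$ at infinity, which is precisely what makes the growth forced by the isometry impossible.
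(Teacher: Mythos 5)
Your proposal is correct and takes essentially the same route as the paper: the identical test functions $f_v=\frac{1}{|v|+1}\chi_v$ pin down $|\psi(v)|=|v|+1$ from the isometry identity, while boundedness via Theorem~\ref{boundedness} forces $\psi(v)\to 0$ as $|v|\to\infty$, yielding the contradiction. If anything, your explicit derivation of the decay ($|\psi(v)|\le C/\log|v|$ from the finiteness of $\sup_{v\ne o}\log|v|\,|\psi(v)|$) states this step more cleanly than the paper does.
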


\subsection{Compactness and Essential Norm}
We begin by giving a useful compactness criterion for the bounded operators from $\Lip_{\textbf{w}}$ or $\Lip_{\textbf{w},0}$ into $L^\infty$.

\begin{lemma}\label{compact:chara} A bounded multiplication operator $M_\psi$ from {\rm $\Lip_{\textbf{w}}$} to $L^\infty$ is compact if and only if for every bounded sequence $\{f_n\}$ in {\rm $\Lip_{\textbf{w}}$} converging to 0 pointwise, the sequence $\|\psi f_n\|_\infty$ approaches 0 as $n\to\infty$.\end{lemma}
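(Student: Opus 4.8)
The plan is to follow the same template used for the earlier compactness criteria, Lemma~\ref{compact lemma section 2} and Lemma~\ref{compact lemma section 3}, adapting it to the target space $L^\infty$ with the supremum norm. The statement is an ``if and only if,'' so I would prove the two implications separately.

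For the forward direction, assume $M_\psi:\Lip_{\textbf{w}}\to L^\infty$ is compact and let $\{f_n\}$ be bounded in $\Lip_{\textbf{w}}$, say $\|f_n\|_{\textbf{w}}\le 1$, with $f_n\to 0$ pointwise. I would argue that $\|\psi f_n\|_\infty\to 0$ by contradiction: if not, some subsequence satisfies $\|\psi f_{n_j}\|_\infty\ge\delta>0$. By compactness, $\{\psi f_{n_j}\}$ has a further subsequence converging in $L^\infty$ to some $g$. Uniform (hence pointwise) convergence together with $f_{n_j}\to 0$ pointwise forces $\psi(v)f_{n_j}(v)\to 0$ for each fixed $v$, so $g\equiv 0$, contradicting the norm lower bound $\delta$. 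Hence $\|\psi f_n\|_\infty\to 0$. This half is routine because the $L^\infty$-norm dominates pointwise evaluation directly, with no auxiliary estimate needed.

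For the converse, assume every bounded pointwise-null sequence has image with $\|\psi f_n\|_\infty\to 0$, and let $\{g_n\}$ be arbitrary with $\|g_n\|_{\textbf{w}}\le 1$. By part (a) of Lemma~\ref{new}, $|g_n(v)|\le 1+\log|v|$ for $v\in T^*$ and $|g_n(o)|\le 1$, so $\{g_n\}$ is uniformly bounded on each finite subset of $T$. Since $T$ is countable, a diagonal extraction yields a subsequence $\{g_{n_k}\}$ converging pointwise to a function $g$. The key technical point is to verify that $g\in\Lip_{\textbf{w}}$: fixing $v\in T^*$ and $\varepsilon>0$, for $k$ large one has $|g(v)-g_{n_k}(v)|$ and $|g(v^-)-g_{n_k}(v^-)|$ each below $\varepsilon/(2|v|)$, whence
$$|v|Dg(v)\le |v||g(v)-g_{n_k}(v)|+|v||g_{n_k}(v^-)-g(v^-)|+|v|Dg_{n_k}(v)<\varepsilon+1,$$
and letting $\varepsilon\to 0$ gives $|v|Dg(v)\le 1$, so $\|g\|_{\textbf{w}}\le 1$. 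Then $f_k=g_{n_k}-g$ is bounded in $\Lip_{\textbf{w}}$ and tends to $0$ pointwise, so by hypothesis $\|\psi f_k\|_\infty\to 0$; that is, $\psi g_{n_k}\to\psi g$ in $L^\infty$. Since every bounded sequence admits such a subsequence, $M_\psi$ is compact.

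I expect the main obstacle to be precisely the verification that the pointwise limit $g$ lies in $\Lip_{\textbf{w}}$, which is what legitimizes forming the difference sequence $g_{n_k}-g$ inside the domain and invoking the hypothesis. This is the step where the weighted seminorm $\sup_{v\in T^*}|v|Dg(v)$ must be controlled, and it relies on Lemma~\ref{new}(a) to secure the local uniform bounds needed for the diagonal extraction in the first place.
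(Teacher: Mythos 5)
Your proof is correct and follows essentially the same route as the paper's: for the forward direction, compactness plus pointwise identification of the limit as $0$; for the converse, a pointwise-convergent subsequence whose limit $g$ is shown to lie in $\Lip_{\textbf{w}}$ so that the hypothesis can be applied to $f_k=g_{n_k}-g$ (the paper's only cosmetic difference is that it first normalizes $g_n$ to vanish at a fixed vertex $w$ and bounds $|g_n(v)|$ by $d(v,w)$, rather than invoking Lemma~\ref{new}(a) as you do, which is exactly how the paper itself handles the analogous Lemma~\ref{compact lemma section 2}). One immaterial slip: from $|v|Dg(v)\le 1$ and $|g(o)|\le 1$ you get $\|g\|_{\textbf{w}}\le 2$, not $\|g\|_{\textbf{w}}\le 1$, but all the argument requires is that $g\in\Lip_{\textbf{w}}$ with some finite norm, so nothing breaks.
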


\begin{proof}  Assume $M_\psi$ is compact on $\Lip_{\textbf{w}}$ and let $\{f_n\}$ be a bounded sequence in $\Lip_{\textbf{w}}$ converging to 0 pointwise. By rescaling the sequence, if necessary, we may assume $\|f_n\|_{\textbf{w}} \le 1$ for all $n\in \N$. By the compactness of $M_\psi$, $\{f_n\}$ has a subsequence $\{f_{n_k}\}$ such that $\{\psi f_{n_k}\}$ converges in the supremum-norm to some function $f\in L^\infty$.
In particular, $\psi f_{n_k}\to f$ pointwise. Since by assumption, $f_n\to 0$ pointwise, it follows that $f$ must be identically 0. Thus, the only limit point of the sequence $\{\psi f_n\}$ in $L^\infty$ is 0. Hence $\|\psi f_n\|_\infty\to 0$.

	Conversely, assume that for every bounded sequence $\{f_n\}$ in $\Lip_{\textbf{w}}$ converging to 0 pointwise, the sequence $\|\psi f_n\|_\infty$ approaches 0 as $n\to\infty$. Let $\{g_n\}$ be a sequence in $\Lip_{\textbf{w}}$ with $\|g_n\|_{\textbf{w}}\le 1$. Fix $w\in T$ and, by replacing $g_n$ with $g_n-g_n(w)$, assume $g_n(w)=0$ for all $n\in \N$. Then, for each $v\in T$, $|g_n(v)|=|g_n(v)-g_n(w)|\le d(v,w)$. Therefore, $g_n$ is uniformly bounded on finite subsets of $T$, and so some subsequence %, which for notational convenience we reindex as the original sequence,
 $\{g_{n_k}\}_{k\in\N}$ converges pointwise to some function $g$ on $T$. Fix $\e>0$ and $v\in T^*$. Then, $|g(o)-g_{n_k}(o)|<\displaystyle\frac{\e}{2}$, $|g_{n_k}(v)-g(v)|<\displaystyle{\e}{2|v|}$ and  $|g_{n_k}(v^-)-g(v^-)|<\frac{\e}{2|v|}$ for all $k$ sufficiently large. Thus,
\ben |v|Dg(v)&\le& |v||g(v)-g(v^-)-(g_{n_k}(v)-g_{n_k}(v^-))|+|v|D g_{n_k}(v)\nonumber\\
&<&\varepsilon +|v|D g_{n_k}(v),\nonumber\eeqn
for $k$ sufficiently large.
 Consequently, $g\in\Lip_{\textbf{w}}$ we have
\ben \|g\|_{\textbf{w}} &=&|g(o)|+\sup_{v\in T^*}|v|Dg(v)\nonumber\\
&\le & |g(o)-g_{n_k}(o)|+|g_{n_k}(o)|+\e +
\sup_{v\in T^*}Dg_{n_k}(v)\nonumber\\&<&2\e+\|g_{n_k}\|_{\textbf{w}}\le 2\e+1.\nonumber\eeqn Since $\e$ was arbitrary, it follows that $\|g\|_{\textbf{w}}\le 1$. Therefore, the sequence $\{f_k\}$ defined by $f_k=g_{n_k}-g$ is bounded in $\Lip_{\textbf{w}}$ and converges to 0 pointwise, hence, by the hypothesis, $\|\psi f_k\|_\infty\to 0$ as $n\to\infty$. We conclude that $\psi g_{n_k}\to\psi g$ in $L^\infty$, proving the compactness of $M_\psi$.
\end{proof}

By an analogous argument, we obtain the corresponding compactness criterion for $M_\psi : \Lip_{\textbf{w},0} \to L^\infty$.

\begin{lemma}\label{compact:chara-I} A bounded multiplication operator $M_\psi$ from {\rm $\Lip_{{\textbf{w}},0}$} to $L^\infty$ is compact if and only if for every bounded sequence $\{f_n\}$ in {\rm $\Lip_{{\textbf{w}},0}$} converging to 0 pointwise, the sequence $\|\psi f_n\|_\infty$ approaches 0 as $n\to\infty$.\end{lemma}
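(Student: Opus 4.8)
The plan is to prove the two implications separately, mirroring the proof of Lemma~\ref{compact:chara} but being careful at the one point where the little-oh space behaves differently. The forward implication transcribes essentially verbatim, whereas the converse requires a genuinely new observation, which I will isolate as the crux.

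For the forward direction I would assume $M_\psi:\Lip_{{\textbf{w}},0}\to L^\infty$ is compact and take a bounded sequence $\{f_n\}$ in $\Lip_{{\textbf{w}},0}$ with $f_n\to 0$ pointwise, rescaling so that $\|f_n\|_{\textbf{w}}\le 1$. Compactness yields a subsequence $\{\psi f_{n_k}\}$ converging in $L^\infty$ to some $f$; since $\|\cdot\|_\infty$-convergence forces pointwise convergence and $f_n\to 0$ pointwise, $f\equiv 0$. As $0$ is then the only possible limit point of $\{\psi f_n\}$ in $L^\infty$, I conclude $\|\psi f_n\|_\infty\to 0$. This argument uses nothing specific to $\Lip_{\textbf{w}}$ versus $\Lip_{{\textbf{w}},0}$, so it carries over unchanged.

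The converse is where the main obstacle lies. One is tempted to repeat the proof of Lemma~\ref{compact:chara}: given $\{g_n\}$ with $\|g_n\|_{\textbf{w}}\le 1$, normalize at a base vertex, extract a pointwise limit $g$, and apply the hypothesis to $f_k=g_{n_k}-g$. The difficulty is that the pointwise limit of a bounded sequence in $\Lip_{{\textbf{w}},0}$ need not lie in $\Lip_{{\textbf{w}},0}$ (for instance, the level-$n$ truncations of $v\mapsto\log(1+|v|)$ lie in $\Lip_{{\textbf{w}},0}$, have $\|\cdot\|_{\textbf{w}}\le 1$, yet converge pointwise to a function with $|v|Df(v)\to 1$), so $f_k$ need not be admissible for the hypothesis. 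I would circumvent this by reducing to Lemma~\ref{compact:chara} itself, via the claim that the stated condition for $\Lip_{{\textbf{w}},0}$-sequences already forces the analogous condition for all bounded $\Lip_{\textbf{w}}$-sequences. To prove the claim, let $\{h_n\}\subset\Lip_{\textbf{w}}$ be bounded with $h_n\to 0$ pointwise and suppose, for contradiction, that $\|\psi h_n\|_\infty\not\to 0$; passing to a subsequence gives vertices $u_n$ with $|\psi(u_n)h_n(u_n)|\ge\delta>0$, and pointwise convergence rules out $\{|u_n|\}$ bounded, so $|u_n|\to\infty$. Truncating each $h_n$ to be constant on every sector $S_u$ with $|u|=|u_n|$ produces $\tilde h_n\in\Lip_{{\textbf{w}},0}$ with $\|\tilde h_n\|_{\textbf{w}}\le\|h_n\|_{\textbf{w}}$, with $\tilde h_n\to 0$ pointwise (since $|u_n|\to\infty$), and with $\tilde h_n(u_n)=h_n(u_n)$, so $\|\psi\tilde h_n\|_\infty\ge\delta$, contradicting the hypothesis.

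Once that reduction is in place the rest is immediate: the premise of the lemma gives boundedness of $M_\psi:\Lip_{{\textbf{w}},0}\to L^\infty$, hence of $M_\psi:\Lip_{\textbf{w}}\to L^\infty$ by Theorem~\ref{boundedness}, so Lemma~\ref{compact:chara} applies and shows $M_\psi:\Lip_{\textbf{w}}\to L^\infty$ is compact; its restriction to the closed subspace $\Lip_{{\textbf{w}},0}$ is then compact as well. The step I expect to be the real obstacle is precisely this truncation reduction: verifying that the $\tilde h_n$ genuinely lie in $\Lip_{{\textbf{w}},0}$ (their weighted differences $|v|D\tilde h_n(v)$ vanish for $|v|>|u_n|$ and otherwise agree with those of $h_n$) while still detecting $\psi$ at $u_n$, which is exactly what repairs the gap left by the failure of $\Lip_{{\textbf{w}},0}$ to be closed under pointwise limits.
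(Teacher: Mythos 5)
Your proof is correct, and in the converse direction it takes a genuinely different route from the paper's. The paper disposes of this lemma with a single sentence ("by an analogous argument"), i.e.\ it intends a re-run of the proof of Lemma~\ref{compact:chara} with $\Lip_{{\textbf{w}},0}$ in place of $\Lip_{\textbf{w}}$; you correctly spotted that the literal re-run stumbles, because the pointwise limit $g$ of a bounded sequence in $\Lip_{{\textbf{w}},0}$ need only lie in $\Lip_{\textbf{w}}$ (your truncated-logarithm example is exactly right), so the test sequence $g_{n_k}-g$ is not admissible for a hypothesis that only covers sequences in $\Lip_{{\textbf{w}},0}$. Your repair is sound: the truncation of $h_n$ at level $|u_n|$ has unchanged weighted differences up to that level and vanishing ones beyond it, hence lies in $\Lip_{{\textbf{w}},0}$ with no increase of norm, still tends to $0$ pointwise because $|u_n|\to\infty$, and still sees $\psi$ at $u_n$; this shows the little-space condition forces the full-space condition, after which Theorem~\ref{boundedness} and Lemma~\ref{compact:chara} give compactness of $M_\psi$ on $\Lip_{\textbf{w}}$, and restriction to the closed subspace $\Lip_{{\textbf{w}},0}$ finishes. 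A bonus of your route is that it proves the little-space hypothesis already yields compactness on all of $\Lip_{\textbf{w}}$, consistent with the equivalence of (a) and (b) in Theorem~\ref{compactLwLinfty}. For comparison, the paper's intended argument can also be patched internally, without truncation: one only needs $\{\psi g_{n_k}\}$ to converge in $L^\infty$, not that $g$ belong to the little space, so if $\{\psi g_{n_k}\}$ were not Cauchy one could pick index pairs $k_i<j_i$ with $\|\psi(g_{n_{k_i}}-g_{n_{j_i}})\|_\infty\ge\varepsilon$; the differences $g_{n_{k_i}}-g_{n_{j_i}}$ do lie in $\Lip_{{\textbf{w}},0}$, are bounded, and tend to $0$ pointwise, contradicting the hypothesis, and completeness of $L^\infty$ concludes. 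That patch is shorter and self-contained; yours makes the already-proven full-space lemma do the work and exposes the relationship between the two spaces explicitly.
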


\begin{theorem}\label{compactLwLinfty} For a bounded operator $M_\psi$ from {\rm$\Lip_{\textbf{w}}$} to $L^\infty$ (or equivalently from {\rm$\Lip_{{\textbf{w}},0}$} to $L^\infty$) the following statements are equivalent:
\begin{enumerate}
\item[\rm{(a)}] {\rm $M_\psi:\Lip_{\textbf{w}}\to L^\infty$} is compact.
\item[\rm{(b)}] {\rm $M_\psi:\Lip_{{\textbf{w}},0}\to L^\infty$} is compact.
\item[\rm{(c)}] $\lim\limits_{|v|\to \infty}\log|v||\psi(v)|=0.$
\end{enumerate}
\end{theorem}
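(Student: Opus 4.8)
The plan is to establish the cycle $(a)\Rightarrow(b)\Rightarrow(c)\Rightarrow(a)$, using the compactness criteria of Lemmas~\ref{compact:chara} and \ref{compact:chara-I} as the bridge between operator compactness and the behavior of $\|\psi f_n\|_\infty$ along bounded pointwise-null sequences. The implication $(a)\Rightarrow(b)$ is immediate: since $\Lip_{{\textbf{w}},0}$ is a closed subspace of $\Lip_{\textbf{w}}$, the restriction to $\Lip_{{\textbf{w}},0}$ of a compact operator on $\Lip_{\textbf{w}}$ is again compact. The substance therefore lies in $(b)\Rightarrow(c)$ and $(c)\Rightarrow(a)$.

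For $(b)\Rightarrow(c)$ I would argue along sequences: it suffices to show $\log|v_n||\psi(v_n)|\to 0$ for every sequence $\{v_n\}$ with $2\le|v_n|\to\infty$. The key is to exhibit test functions reaching height $\log|v_n|$ at $v_n$ while remaining bounded in the weighted norm and tending to $0$ pointwise. I would reuse the ``bump'' construction from Section~2, namely
\[
g_n(v)=\begin{cases}0 & |v|<\sqrt{|v_n|},\\ 2\log|v|-\log|v_n| & \sqrt{|v_n|}\le|v|<|v_n|,\\ \log|v_n| & |v|\ge|v_n|.\end{cases}
\]
By the same estimate on $|v|Dg_n(v)$ carried out there, $\sup_n\|g_n\|_{\textbf{w}}<\infty$; moreover each $g_n$ is eventually constant, so $g_n\in\Lip_{{\textbf{w}},0}$, and for fixed $v$ one has $g_n(v)=0$ once $\sqrt{|v_n|}>|v|$, whence $g_n\to0$ pointwise. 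Lemma~\ref{compact:chara-I} then forces $\|\psi g_n\|_\infty\to0$, while $\|\psi g_n\|_\infty\ge|\psi(v_n)|\,g_n(v_n)=\log|v_n|\,|\psi(v_n)|$. Letting $n\to\infty$ yields $(c)$.

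For $(c)\Rightarrow(a)$ I would verify the criterion of Lemma~\ref{compact:chara} directly. Take $\{f_n\}$ in $\Lip_{\textbf{w}}$ with $\|f_n\|_{\textbf{w}}\le s$ and $f_n\to0$ pointwise, and fix $\e>0$. Since $1+\log|v|\le2\log|v|$ for $|v|\ge3$, hypothesis $(c)$ forces $(1+\log|v|)|\psi(v)|\to0$, so there is $M\ge 1$ with $(1+\log|v|)|\psi(v)|<\e/s$ for $|v|\ge M$; combined with the pointwise bound $|f_n(v)|\le(1+\log|v|)\|f_n\|_{\textbf{w}}$ of Lemma~\ref{new}(a), this gives $|\psi(v)f_n(v)|<\e$ for all $n$ whenever $|v|\ge M$. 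On the finite set $\{|v|<M\}$, pointwise convergence is uniform and $\psi$ is bounded, so $\max_{|v|<M}|\psi(v)f_n(v)|\to0$. Hence $\|\psi f_n\|_\infty<\e$ for $n$ large, and Lemma~\ref{compact:chara} yields compactness; the identical argument with Lemma~\ref{compact:chara-I} gives $(c)\Rightarrow(b)$ directly, should a non-cyclic presentation be preferred.

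I expect the only real obstacle to be the design of the test functions $g_n$: one needs a function that simultaneously attains the value $\log|v_n|$ at $v_n$ (to detect the product $\log|v_n||\psi(v_n)|$), has uniformly bounded weighted seminorm $\sup_{v\in T^*}|v|Dg_n(v)$, and vanishes at each fixed vertex for all large $n$. The tension between the first and third requirements is resolved by the inner cutoff at radius $\sqrt{|v_n|}$, which pushes the ``ramp'' out to infinity while the doubled slope $2\log|v|$ still lets the function climb to $\log|v_n|$ by the time $|v|=|v_n|$. Verifying the uniform norm bound is the one genuinely computational point, and it has already been recorded in Section~2, so the remaining steps reduce to routine applications of Lemmas~\ref{new}, \ref{compact:chara}, and \ref{compact:chara-I}.
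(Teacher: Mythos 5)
Your proposal is correct and follows essentially the same route as the paper: the trivial restriction argument for (a)$\Rightarrow$(b), a bounded pointwise-null test-function argument via Lemma~\ref{compact:chara-I} for (b)$\Rightarrow$(c), and direct verification of the criterion of Lemma~\ref{compact:chara} (using Lemma~\ref{new}(a) and the bound $1+\log|v|\le 2\log|v|$ for $|v|\ge 3$) for (c)$\Rightarrow$(a). The only difference is the choice of test functions in (b)$\Rightarrow$(c): where you recycle the Section~2 bump with inner cutoff at $\sqrt{|v_n|}$, the paper uses the ramp $f_n(v)=(\log|v|)^2/\log|v_n|$ for $1\le|v|<|v_n|$, capped at $\log|v_n|$ for $|v|\ge|v_n|$, which serves the identical purpose (pointwise null, uniformly bounded in $\|\cdot\|_{\textbf{w}}$, and equal to $\log|v_n|$ at $v_n$).
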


\begin{proof} (a)$\Longrightarrow$(b) is trivial.

(b)$\Longrightarrow$(c): Let $\{v_n\}$ be a sequence of vertices such that $1\le |v_n|\to\infty$. We need to show that $$\lim\limits_{n\to \infty}\log|v_n||\psi(v_n)|=0.$$
For $n\in\N$ define
$$f_n(v)=\begin{cases} \ \ 0 &\hbox{ if }v=0,\\
\frac{(\log|v|)^2}{\log|v_n|}&\hbox{ if }1\le |v|<|v_n|,\\
\log|v_n|&\hbox{ if }|v|\ge |v_n|.\end{cases}$$
Then $\{f_n\}$ converges to 0 pointwise. Using the fact that $|v|(\log|v|-\log(|v|-1))\le 1$ for any choice of $v$ in $T^*$,  we have
$$|v|Df_n(v)=\frac{|v|\left[(\log|v|)^2-(\log(|v|-1))^2\right]}{\log|v_n|}\le \frac{\log|v|+\log(|v|-1)}{\log|v_n|}\le 2,$$
for $1\le |v|\le |v_n|$. Moreover, $|v|Df_n(v)=0$ for $|v|>|v_n|$. Thus, $f_n\in\Lip_{{\textbf{w}},0}$ and $\{\|f_n\|_{\textbf{w}}\}$ is bounded. By the compactness of $M_\psi$ as an operator acting on $\Lip_{{\textbf{w}},0}$ and by Lemma~\ref{compact:chara-I}, we deduce $$\log|v_n||\psi(v_n)|\le \|\psi f_n\|_\infty\to 0$$ as $n\to\infty$.

(c)$\Longrightarrow$(a): Assume $\{f_n\}$ is a sequence in $\Lip_{\textbf{w}}$ converging to 0 pointwise and such that $a=\sup\limits_{n\in\N}\|f_n\|_{\textbf{w}}<\infty$. By Lemma~\ref{new}, for all $v\in T^*$ and all $n\in \N$, we have
$$|\psi(v)f_n(v)|\le a(1+\log|v|)|\psi(v)|.$$
Fix $\e>0$.  There exists $N\in\N$ such that $N\ge 3$ and for $|v|\ge N$, $\log|v||\psi(v)|<\displaystyle\frac{\e}{2a}.$ Thus, for $|v|\ge N$ and for all $n\in\N$, $|\psi(v)f_n(v)|\le 2a\log|v||\psi(v)|<\e.$
On the other hand, since $f_n\to 0$ pointwise, for each vertex $v$ such that $|v|<N$ and $\psi(v)\ne 0$, we obtain $|f_n(v)|<\displaystyle\frac{\e}{|\psi(v)|}$ for all $n$ sufficiently large. Hence $|\psi(v)f_n(v)|<\e$ for all $v\in T$ and all $n$ sufficiently large. Therefore, $\|M_\psi f_n\|_\infty\to 0$ as $n\to\infty$, which, by Lemma~\ref{compact:chara}, proves the compactness of $M_\psi$.
\end{proof}

Next, we determine the essential norm of the bounded multiplication operators $M_\psi$ from $\Lip_{\textbf{w}}$ or $\Lip_{{\textbf{w}},0}$ to $L^\infty$.

\begin{theorem}\label{essnorm_toinfty} Let $M_\psi$ be a bounded multiplication operator from {\rm$\Lip_{\textbf{w}}$} or {\rm$\Lip_{{\textbf{w}},0}$} to $L^\infty$. Then
$$\|M_\psi\|_e=\lim_{n\to\infty}\sup_{|v|\ge n}\log|v||\psi(v)|.$$
\end{theorem}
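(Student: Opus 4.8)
Throughout set $L=\lim_{n\to\infty}\sup_{|v|\ge n}\log|v||\psi(v)|$, a limit that exists because $n\mapsto\sup_{|v|\ge n}\log|v||\psi(v)|$ is non-increasing. The plan is to establish the two inequalities $\|M_\psi\|_e\ge L$ and $\|M_\psi\|_e\le L$. If $L=0$, then Theorem~\ref{compactLwLinfty} shows $M_\psi$ is compact, whence $\|M_\psi\|_e=0=L$; so I may assume $L>0$ and fix a sequence $\{v_n\}$ in $T$ with $|v_n|\to\infty$ and $\log|v_n||\psi(v_n)|\to L$, obtained by selecting for each $n$ a vertex of length at least $n$ that nearly realizes the supremum.

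For the lower bound I will exhibit a weakly null test sequence and invoke complete continuity. I want functions $f_n\in\Lip_{{\textbf{w}},0}$ that (i) converge to $0$ pointwise, (ii) have $\|f_n\|_{\textbf{w}}\to1$, and (iii) satisfy $f_n(v_n)/\log|v_n|\to1$. Picking integers $m_n\to\infty$ with $m_n<|v_n|$ and $\log m_n=o(\log|v_n|)$, I define $f_n(v)=0$ for $|v|\le m_n$, $f_n(v)=\log(1+|v|)-\log(1+m_n)$ for $m_n<|v|<|v_n|$, and $f_n(v)=\log(1+|v_n|)-\log(1+m_n)$ for $|v|\ge|v_n|$. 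Because the support recedes to infinity, $f_n\to0$ pointwise; the one-step increments in the ramp region are exactly $\log\frac{|v|+1}{|v|}$, so $|v|Df_n(v)=|v|\log\frac{|v|+1}{|v|}\le1$, and since $x\mapsto x\log\frac{x+1}{x}$ increases to $1$ the norm is $\|f_n\|_{\textbf{w}}=|v_n|\log\frac{|v_n|+1}{|v_n|}\to1$; finally $f_n(v_n)=\log(1+|v_n|)-\log(1+m_n)$ together with $\log m_n=o(\log|v_n|)$ gives (iii). By Lemma~\ref{weakconv_Lipw}, $f_n\to0$ weakly in $\Lip_{{\textbf{w}},0}$, hence weakly in $\Lip_{\textbf{w}}$ by restricting functionals; thus $\|Kf_n\|_\infty\to0$ for every compact $K$ \cite{Conway:07}. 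Since $\|\psi f_n\|_\infty\ge|\psi(v_n)|f_n(v_n)$, for any compact $K$ we obtain
\[\|M_\psi-K\|\ge\limsup_{n\to\infty}\frac{\|M_\psi f_n\|_\infty}{\|f_n\|_{\textbf{w}}}\ge\lim_{n\to\infty}\frac{|\psi(v_n)|\,f_n(v_n)}{\|f_n\|_{\textbf{w}}}=L,\]
and taking the infimum over $K$ yields $\|M_\psi\|_e\ge L$.

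For the upper bound I reuse the truncation operators $K_n$ of Theorem~\ref{upp_est_ess}, given by $(K_nf)(v)=f(v)$ for $|v|\le n$ and $(K_nf)(v)=f(v_n)$ for $|v|>n$, where $v_n$ is the ancestor of $v$ of length $n$; these are compact on $\Lip_{\textbf{w}}$, so $M_\psi K_n:\Lip_{\textbf{w}}\to L^\infty$ is compact. With $J_n=I-K_n$, the function $J_nf$ vanishes on $\{|v|\le n\}$ and equals $f(v)-f(v_n)$ elsewhere; telescoping along the geodesic from $v_n$ to $v$ and using $kDf(v_k)\le\|f\|_{\textbf{w}}$ gives
\[|f(v)-f(v_n)|\le\sum_{k=n+1}^{|v|}\frac{1}{k}\,\|f\|_{\textbf{w}}\le(\log|v|-\log n)\|f\|_{\textbf{w}}\le\log|v|\,\|f\|_{\textbf{w}}.\]
Hence $\|\psi J_nf\|_\infty=\sup_{|v|>n}|\psi(v)||f(v)-f(v_n)|\le\big(\sup_{|v|>n}\log|v||\psi(v)|\big)\|f\|_{\textbf{w}}$, so $\|M_\psi-M_\psi K_n\|\le\sup_{|v|>n}\log|v||\psi(v)|$, and letting $n\to\infty$ gives $\|M_\psi\|_e\le L$. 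As the $K_n$ also carry $\Lip_{{\textbf{w}},0}$ into itself and the test functions lie in $\Lip_{{\textbf{w}},0}$, both bounds are valid whether the essential norm is computed from $\Lip_{\textbf{w}}$ or from $\Lip_{{\textbf{w}},0}$, so both equal $L$.

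The main obstacle is the lower-bound construction. The naive candidate $f_n(v)=(\log|v|)^2/\log|v_n|$ capped at level $|v_n|$ (the function used in Theorem~\ref{compactLwLinfty}) has $\|f_n\|_{\textbf{w}}\to2$, which would only give $\|M_\psi\|_e\ge\frac12L$. Attaining the sharp constant forces a simultaneous balance of three competing requirements: a receding support (for pointwise convergence to $0$), an onset level $m_n$ increasing slowly enough that $\log m_n=o(\log|v_n|)$ (so that $f_n(v_n)\sim\log|v_n|$), and pure logarithmic increments (so that $|v|Df_n(v)\le1$ and $\|f_n\|_{\textbf{w}}\to1$); reconciling these is the crux of the argument.
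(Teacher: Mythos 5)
Your proof is correct, and its overall architecture coincides with the paper's: a lower bound via weakly null test sequences (Lemma~\ref{weakconv_Lipw} plus complete continuity of compact operators) and an upper bound via the same truncation operators $K_n$ with the same telescoping estimate $|f(v)-f(v_n)|\le\sum_{j=n+1}^{|v|}\frac{1}{j}\le\log|v|$. The one genuine difference is how you reach the sharp constant in the lower bound. The paper's test functions are $f_{n,p}(v)=(\log|v|)^{p+1}/(\log|v_n|)^p$ for $|v|<|v_n|$, capped at $\log|v_n|$, with $\|f_{n,p}\|_{\textbf{w}}\le p+1$; this gives $\|M_\psi\|_e\ge\frac{1}{p+1}A(\psi)$ for each fixed $p\in(0,1)$, and the sharp bound only emerges after the second limit $p\to0$. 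You instead use a delayed logarithmic ramp: zero up to a slowly growing onset $m_n$ with $\log m_n=o(\log|v_n|)$, then $\log(1+|v|)-\log(1+m_n)$, then constant. Since the increments are pure logarithmic steps, $\|f_n\|_{\textbf{w}}=|v_n|\log\frac{|v_n|+1}{|v_n|}\to1$ directly, while $f_n(v_n)\sim\log|v_n|$, so a single sequence and a single limit give $\|M_\psi\|_e\ge L$. Both resolve the same obstruction you correctly identify (the naive $(\log|v|)^2/\log|v_n|$ family has norm tending to $2$ and loses a factor $\frac12$); your resolution trades the paper's auxiliary exponent $p$ for an auxiliary spatial scale $m_n$, which is arguably cleaner since it eliminates the two-parameter bookkeeping and the final passage $p\to0$. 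Your handling of the two possible domains (weak nullity in $\Lip_{{\textbf{w}},0}$ implying weak nullity in $\Lip_{\textbf{w}}$ by restricting functionals, and $K_n$ mapping $\Lip_{{\textbf{w}},0}$ into itself) also makes explicit a point the paper only asserts parenthetically.
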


\begin{proof} Define $A(\psi)=\lim\limits_{n\to\infty}\sup\limits_{|v|\ge n}\log|v||\psi(v)|.$ If $A(\psi)=0$, then by Theorem~\ref{compactLwLinfty}, $M_\psi$ is compact, hence its essential norm is 0. So assume $A(\psi)>0$. We first show that $\|M_\psi\|_e\ge A(\psi)$. Let $\{v_n\}$ be a sequence in $T$ such that $1\le |v_n|\to\infty$ and $$A(\psi)=\lim_{n\to\infty}\log|v_n||\psi(v_n)|.$$ Fix $p\in (0,1)$ and for each $n\in\N$, define
$$f_{n,p}(v)=\begin{cases} \ \ 0 &\quad\hbox{ if }v=0,\\
\frac{\ (\log|v|)^{p+1}}{(\log|v_n|)^p} &\quad\hbox{ if }1\le|v|<|v_n|,\\
\ \log|v_n| &\quad\hbox{ if }|v|\ge |v_n|.\end{cases}$$
Then $\{f_{n,p}\}$ converges to 0 pointwise, $f_{n,p}\in\Lip_{{\textbf{w}},0}$, $f_{n,p}(v_n)=\log|v_n|$, and
\ben \|f_{n,p}\|_{\textbf{w}}&=&\sup_{2\le|v|\le |v_n|}\frac{|v|}{(\log|v_n|)^p}\left[\left(\log|v|\right)^{p+1}-\left(\log(|v|-1)\right)^{p+1}\right]\nonumber\\
&=&  \frac{|v_n|}{(\log|v_n|)^p}\left[(\log|v_n|)^{p+1}-(\log(|v_n|-1))^{p+1}\right]\le p+1.\nonumber\eeqn
 By Lemma~\ref{weakconv_Lipw}, $\{f_{n,p}\}$ converges to $0$ weakly in $\Lip_{{\textbf{w}},0}$. Let $K$ be a compact operator from $\Lip_{{\textbf{w}},0}$ (or equivalently, from $\Lip_{\textbf{w}}$) to $L^\infty$. Since compact operators are completely continuous, it follows that $\|K f_{n,p}\|_\infty\to 0$ as $n\to\infty$. Thus,
\ben \|M_\psi - K\|&\ge &\limsup_{n\to\infty}\frac{\|(M_\psi-K)f_{n,p}\|_\infty}{\|f_{n,p}\|_{\textbf{w}}}\nonumber\\&\ge &\frac1{p+1}\limsup_{n\to\infty}\|M_\psi f_{n,p}\|_\infty\nonumber\\&\ge&
\frac1{p+1}\limsup_{n\to\infty}\log|v_n||\psi(v_n)|.\nonumber\eeqn
Taking the infimum over all such compact operators $K$ and passing to the limit as $p$ approaches 0, we obtain
$$\|M_\psi\|_e\ge \lim_{n\to\infty}\log|v_n||\psi(v_n)|=A(\psi).$$

To prove the estimate $\|M_\psi\|_e\le A(\psi)$, for each $n\in \N$ and for $f\in \Lip_{\textbf{w}}$, define
$$K_nf(v)=\begin{cases} f(v) &\quad\hbox{ if }|v|\le n,\\
 f(v_n) &\quad\hbox{ if }|v|> n,\end{cases}$$
where $v_n$ is the ancestor of $v$ of length $n$. In the proof of Theorem~\ref{upp_est_ess}, it is was shown that $K_n$ is a compact operator on $\Lip_{\textbf{w}}$. Since $M_\psi:\Lip_{\textbf{w}}\to L^\infty$ is bounded, it follows that $M_\psi K_n$ is also compact as an operator from $\Lip_{\textbf{w}}$ to $L^\infty$.

Let $v\in T$, and let $w$ be a vertex in the path from $o$ to $v$ of length $k\ge 1$. Label the vertices from $w$ to $v$ by $v_j$, $j=k,\dots,|v|$. Then for $f\in\Lip_{\textbf{w}}$ with $\|f\|_{\textbf{w}}=1$, we have
$$|f(v)-f(w)|\le \sum_{j=k+1}^{|v|}|f(v_j)-f(v_{j-1})|\le \sum_{j=k+1}^{|v|}\frac1{j}\le \log|v|.$$ Thus
$$\|(M_\psi-M_\psi K_n)f\|_\infty=\sup_{|v|>n}|\psi(v)||f(v)-f(v_n)|\le \sup_{|v|>n}\log|v||\psi(v)|.$$
We deduce
$$\|M_\psi\|_e \le \sup_{\|f\|_{\textbf{w}}=1}\|(M_\psi-M_\psi K_n)f\|_\infty\le \sup_{|v|>n}\log|v||\psi(v)|.$$
Taking the limit as $n\to\infty$, we obtain $\|M_\psi\|_e \le A(\psi)$.	
\end{proof}
%%%%%%%%%%%%%%%%%%%%%%%%%%%%%%%%%%%%%%%%%%%%%%%%%%%%%%%%%%%%%%%%%%%%%%%%%%%%%%%%%%%%%%%%%%%%%%%%%%

%%%%%%%%%%%%%%%%%%%%%%%%%%%%%%%%%%%%%%%%%%%%%%%%%%%%%%%%%%%%%%%%%%%%%%%%%%%%%%%%%%%%%%%%%%%%%%
\section{Multiplication operators from $L^\infty$ to $\Lip_{\textbf{w}}$ or $\Lip_{{\textbf{w}},0}$}
In this last section, we study the multiplication operators $M_\psi$ from $L^\infty$ into the weighted Lipschitz space or the little weighted Lipschitz space.  We first characterize the bounded operators and determine the operator norm.  We also show there are no isometries among such operators.  Finally, we characterize the compact multiplication operators and determine the essential norm.

\subsection{Boundedness and Operator Norm}
%In the following theorem we characterize the boundedness of the multiplication operators from $L^\infty$ to $\Lip_{\textbf{w}}$ and determine their operator norm.  The multiplication operators from $L^\infty$ to $\Lip$ were studied by the last two authors in \cite{ColonnaEasley:10-II}.

For a function $\psi$ on $T$, define
$$\eta_\psi=|\psi(o)|+\sup_{v\in T^*}|v|\left[|\psi(v)|+|\psi(v^-)|\right].$$

\begin{theorem}\label{charbound} For a function $\psi$ on $T$, the following statements are equivalent:
\begin{enumerate}
\item[\rm{(a)}] {\rm $M_\psi:L^\infty\to \Lip_{\textbf{w}}$} is bounded.
\item[\rm{(b)}] $\sup\limits_{v\in T}|v||\psi(v)|<\infty$.
\end{enumerate}
Furthermore, under these conditions, we have
$$\|M_\psi\|=\eta_\psi.$$
\end{theorem}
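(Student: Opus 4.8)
The plan is to establish both implications together with the two matching norm bounds $\|M_\psi\|\le\eta_\psi$ and $\|M_\psi\|\ge\eta_\psi$, which jointly yield the asserted equality. The guiding observation is that for every $f\in L^\infty$ the target norm splits additively,
$$\|\psi f\|_{\textbf{w}}=|\psi(o)f(o)|+\sup_{v\in T^*}|v|\,|\psi(v)f(v)-\psi(v^-)f(v^-)|,$$
so the value-at-$o$ term and the supremum of weighted differences can be handled separately. For $(b)\Longrightarrow(a)$ I first note that $\sup_v|v||\psi(v)|=:C<\infty$ forces $\eta_\psi<\infty$: the estimate $|v||\psi(v^-)|\le\frac{|v|}{|v|-1}|v^-||\psi(v^-)|\le 2C$ for $|v|\ge 2$ (and $|v||\psi(v^-)|=|\psi(o)|$ when $|v|=1$) bounds the supremum defining $\eta_\psi$. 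Then for $\|f\|_\infty\le 1$ the triangle inequality gives $|v|\,|\psi(v)f(v)-\psi(v^-)f(v^-)|\le |v|(|\psi(v)|+|\psi(v^-)|)$ and $|\psi(o)f(o)|\le|\psi(o)|$, whence $\|\psi f\|_{\textbf{w}}\le\eta_\psi$; this proves boundedness and the upper estimate $\|M_\psi\|\le\eta_\psi$.

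For $(a)\Longrightarrow(b)$ I test on the characteristic functions $\chi_v$, which satisfy $\|\chi_v\|_\infty=1$. A direct computation of $\|\psi\chi_v\|_{\textbf{w}}$ shows that for $v\in T^*$ the weighted difference $|w|D(\psi\chi_v)(w)$ is nonzero only at $w=v$ and at the children of $v$ (which exist because $T$ has no terminal vertices), the supremum being attained on the children, so that $\|\psi\chi_v\|_{\textbf{w}}=(|v|+1)|\psi(v)|$. Boundedness then yields $(|v|+1)|\psi(v)|\le\|M_\psi\|$ for all $v\in T^*$, hence $\sup_v|v||\psi(v)|<\infty$, giving the equivalence $(a)\Leftrightarrow(b)$.

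The core of the argument is the reverse estimate $\|M_\psi\|\ge\eta_\psi$, and the difficulty I expect to be the main obstacle is to capture both the additive constant $|\psi(o)|$ and the supremum $\sup_{v}|v|(|\psi(v)|+|\psi(v^-)|)$ with a single unit-norm test function. Fixing $v_0\in T^*$, I build $f$ supported on $\{o,v_0^-,v_0\}$ with unimodular values chosen so that $\psi(o)f(o)=|\psi(o)|$ while $\psi(v_0)f(v_0)=|\psi(v_0)|$ and $\psi(v_0^-)f(v_0^-)=-|\psi(v_0^-)|$ (taking the free value to be $1$ wherever $\psi$ vanishes), and $f=0$ elsewhere; then $\|f\|_\infty=1$ and $|v_0|D(\psi f)(v_0)=|v_0|(|\psi(v_0)|+|\psi(v_0^-)|)$. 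Since the $\Lip_{\textbf{w}}$-norm is the value-at-$o$ term \emph{plus} a supremum dominating its $v_0$-entry, this $f$ gives $\|\psi f\|_{\textbf{w}}\ge|\psi(o)|+|v_0|(|\psi(v_0)|+|\psi(v_0^-)|)$. The delicate point is the boundary case $v_0^-=o$, where $f(o)$ must simultaneously supply the value-at-$o$ term and enter the difference at $v_0$; here the support reduces to $\{o,v_0\}$, and I verify that the choices $\psi(o)f(o)=|\psi(o)|$ and $\psi(v_0)f(v_0)=-|\psi(v_0)|$ still produce $|\psi(o)f(o)|=|\psi(o)|$ together with $|v_0|D(\psi f)(v_0)=|\psi(v_0)|+|\psi(o)|$, which equals the $v_0$-contribution to $\eta_\psi$ since $|v_0|=1$ and $v_0^-=o$. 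As the resulting lower bound holds for every $v_0\in T^*$, taking the supremum over $v_0$ gives $\|M_\psi\|\ge\eta_\psi$, and combining with the upper estimate of the first paragraph yields $\|M_\psi\|=\eta_\psi$.
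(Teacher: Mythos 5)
Your proposal is correct. The equivalence itself is proved exactly as in the paper: both directions use the same test functions ($\chi_v$ for $(a)\Rightarrow(b)$, with the supremum of $|w|D(\psi\chi_v)(w)$ attained at the children of $v$, and the triangle inequality for $(b)\Rightarrow(a)$), and your explicit verification that $\sup_v|v||\psi(v)|<\infty$ forces $\eta_\psi<\infty$ is a detail the paper leaves implicit. Where you genuinely diverge is the lower bound $\|M_\psi\|\ge\eta_\psi$. The paper uses a \emph{single global} test function: it sets $f(v)=\pm\overline{\psi(v)}/|\psi(v)|$ with the sign determined by the parity of $|v|$ (and $f(v)=0$ where $\psi$ vanishes), so that $\psi(v)f(v)=(-1)^{|v|}|\psi(v)|$ for every vertex, whence $D(\psi f)(v)=|\psi(v)|+|\psi(v^-)|$ simultaneously at \emph{all} $v\in T^*$ and $\|M_\psi f\|_{\textbf{w}}=\eta_\psi$ exactly, in one stroke. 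You instead use a \emph{family} of three-point test functions supported on $\{o,v_0^-,v_0\}$, aligned so the sign at $v_0^-$ opposes the one at $v_0$ while the value at $o$ is positively aligned, and then take a supremum over $v_0$; your treatment of the boundary case $v_0^-=o$ is the right check and works as you say, since the two requirements on $f(o)$ are compatible (the paper's parity convention resolves the same point automatically). Both arguments rest on the same sign-alignment idea; the paper's version is slightly slicker (one function, no supremum over $v_0$, though it needs the separate remark for $\psi\equiv 0$), while yours uses only finitely supported test functions and, since you assign the value $1$ wherever $\psi$ vanishes, needs no separate degenerate case.
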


\begin{proof} $(a)\Longrightarrow (b)$: Assume $M_\psi:L^\infty\to \Lip_{\textbf{w}}$ is bounded. Fix $v\in T^*$. Since $\chi_v\in L^\infty$ and $\|\chi_v\|_\infty=1$, the function $\psi \chi_v\in\Lip_{\textbf{w}}$, so
$$|v||\psi(v)|<(|v|+1)|\psi(v)|=\sup_{w\in T^*}|w|D(\psi \chi_v)(w)\le \|M_\psi\|.$$  Thus, $\displaystyle\sup_{v \in T} |v||\psi(v)|$ is finite.

$(b)\Longrightarrow (a)$: Suppose $\displaystyle\sup_{v \in T} |v||\psi(v)| < \infty$. Let $f\in L^\infty$ such that $\|f\|_\infty=1$. Then
$$\|M_\psi f\|_{\textbf{w}}\le |\psi(o)|+\sup_{v\in T^*} |v|\left[|\psi(v)|+|\psi(v^-)|\right]<\infty.$$
Thus, $M_\psi$ is bounded and $\|M_\psi\|\le \eta_\psi$.

	We next show that $\|M_\psi\|\ge \eta_\psi$. The inequality is obvious is $\psi$ is identically 0. For $\psi$ not identically 0 and for $v\in T$, define
$$f(v)=\begin{cases}\ \ \ \ 0 &\quad\hbox{ if }\psi(v)=0,\\
\ \ \overline{\psi(v)}/|\psi(v)|&\quad\hbox{ if }\psi(v)\ne 0, |v| \hbox{ even},\\
-\overline{\psi(v)}/|\psi(v)|&\quad\hbox{ if }\psi(v)\ne 0, |v| \hbox{ odd}.\end{cases}$$
Then $\|f\|_\infty=1$ and for $v\in T^*$, $D(\psi f)(v)=|\psi(v)|+|\psi(v^-)|$, so that
$$ \|M_\psi f\|_{\textbf{w}}=|\psi(o)|+\sup_{v\in T^*}|v|\left[|\psi(v)|+|\psi(v^-)|\right].$$
Thus, $\|M_\psi\|\ge \eta_\psi,$ completing the proof.
\end{proof}

In the next result, we characterize the bounded multiplication operators from $L^\infty$ to $\Lip_{{\textbf{w}},0}$.

\begin{theorem}\label{charbound_0} For a function $\psi$ on $T$, the following statements are equivalent:
\begin{enumerate}
\item[\rm{(a)}] {\rm $M_\psi:L^\infty\to \Lip_{{\textbf{w}},0}$} is bounded.
\item[\rm{(b)}] $\lim\limits_{|v|\to \infty}|v||\psi(v)|=0$.
\end{enumerate}
Furthermore, under these conditions, we have,
$$\|M_\psi\|=\eta_\psi.$$
\end{theorem}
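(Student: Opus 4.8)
The plan is to deduce this from Theorem~\ref{charbound} together with a little-oh refinement, exploiting that $\Lip_{{\textbf{w}},0}$ is a subspace of $\Lip_{\textbf{w}}$ carrying the restriction of the $\|\cdot\|_{\textbf{w}}$-norm. Condition (b) here is precisely the little-oh analogue of the big-oh condition $\sup_{v\in T}|v||\psi(v)|<\infty$ of Theorem~\ref{charbound}, so the only genuinely new content is verifying that the multiplied functions land in the little space; the norm identity will then be inherited.

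For $(a)\Longrightarrow(b)$, I would reuse the alternating unimodular test function from the proof of Theorem~\ref{charbound}: define $f$ by $f(v)=0$ when $\psi(v)=0$ and $f(v)=(-1)^{|v|}\overline{\psi(v)}/|\psi(v)|$ otherwise. Then $f\in L^\infty$ with $\|f\|_\infty=1$, and for $v\in T^*$ one computes $D(\psi f)(v)=|\psi(v)|+|\psi(v^-)|$. Since $M_\psi$ maps into $\Lip_{{\textbf{w}},0}$, the function $\psi f$ satisfies $\lim_{|v|\to\infty}|v|D(\psi f)(v)=0$, whence $\lim_{|v|\to\infty}|v|(|\psi(v)|+|\psi(v^-)|)=0$. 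As $|v||\psi(v)|$ is dominated by this quantity, condition (b) follows by squeezing.

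For $(b)\Longrightarrow(a)$, I would first note that (b) forces $\sup_{v\in T}|v||\psi(v)|<\infty$ (a convergent sequence is bounded, and $T$ is locally finite), so Theorem~\ref{charbound} already yields that $M_\psi:L^\infty\to\Lip_{\textbf{w}}$ is bounded with $\|M_\psi\|=\eta_\psi$. It remains to check the range lies in $\Lip_{{\textbf{w}},0}$. For $f\in L^\infty$ with $\|f\|_\infty\le 1$ and $v\in T^*$, the triangle inequality gives $|v|D(\psi f)(v)\le |v||\psi(v)|+|v||\psi(v^-)|$. The first term tends to $0$ by hypothesis; for the second I would rewrite $|v||\psi(v^-)|=\frac{|v|}{|v|-1}\,|v^-||\psi(v^-)|$, so that it too vanishes since $|v^-|=|v|-1\to\infty$ and $\frac{|v|}{|v|-1}\to 1$. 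This index shift in the $|\psi(v^-)|$ term is the only mildly delicate point, and it is routine. Hence $\psi f\in\Lip_{{\textbf{w}},0}$, establishing (a). Finally, because the norm on $\Lip_{{\textbf{w}},0}$ is the restriction of the $\|\cdot\|_{\textbf{w}}$-norm, the operator norm here is the same supremum $\sup_{\|f\|_\infty\le 1}\|\psi f\|_{\textbf{w}}$ as in Theorem~\ref{charbound}; moreover the extremal $f$ above already produces $\psi f\in\Lip_{{\textbf{w}},0}$ with $\|\psi f\|_{\textbf{w}}=\eta_\psi$, so the value is attained within the little space and $\|M_\psi\|=\eta_\psi$.
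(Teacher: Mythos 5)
Your proof is correct, and it streamlines the paper's argument in two genuine ways rather than reproducing it. For (a)$\Rightarrow$(b), the paper applies $M_\psi$ to the two test functions $1$ and $\chi_{\mathcal{O}}$ (where $\mathcal{O}$ is the set of odd-length vertices) and then extracts $|v||\psi(v)|\to 0$ via the product-rule decomposition $|\psi(v)|D\chi_{\mathcal{O}}(v)\le D(\psi\chi_{\mathcal{O}})(v)+D\psi(v)|\chi_{\mathcal{O}}(v^-)|$; you instead feed in the single alternating unimodular function $f(v)=(-1)^{|v|}\overline{\psi(v)}/|\psi(v)|$ (precisely the extremal function from the proof of Theorem~\ref{charbound}), for which $D(\psi f)(v)=|\psi(v)|+|\psi(v^-)|$ exactly, so membership of $\psi f$ in $\Lip_{{\textbf{w}},0}$ gives $|v|\left(|\psi(v)|+|\psi(v^-)|\right)\to 0$ with no decomposition needed. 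For (b)$\Rightarrow$(a), the paper first proves $|v|D\psi(v)\to 0$ and runs a product-rule estimate, then ends by saying the boundedness and norm formula are proved ``as in'' Theorem~\ref{charbound}; you instead invoke Theorem~\ref{charbound} outright (since the little-oh hypothesis implies the big-oh one) and only verify that the range lands in the little space, via the cruder but sufficient bound $D(\psi f)(v)\le\left(|\psi(v)|+|\psi(v^-)|\right)\|f\|_\infty$. Your observation that the operator norm is then literally inherited --- because the norm of $\Lip_{{\textbf{w}},0}$ is the restriction of $\|\cdot\|_{\textbf{w}}$ and the whole range of $M_\psi$ on $L^\infty$ already lies in the little space, so both operator norms are the same supremum --- is exactly the right way to make the paper's ``similar'' rigorous. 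Two harmless caveats: your extremal $f$ has $\|f\|_\infty=1$ only when $\psi\not\equiv 0$ (the case $\psi\equiv 0$ being trivial), and the rewriting $|v||\psi(v^-)|=\frac{|v|}{|v|-1}|v^-||\psi(v^-)|$ requires $|v|\ge 2$; both are irrelevant to the limiting behavior as $|v|\to\infty$.
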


\begin{proof} $(a)\Longrightarrow (b)$: Assume $M_\psi:L^\infty\to \Lip_{{\textbf{w}},0}$ is bounded. Applying $M_\psi$ to the constant function 1, we obtain $\psi=M_\psi 1\in \Lip_{{\textbf{w}},0}$. On the other hand, if $\mathcal{O} = \{v \in T : |v| \text{ is odd}\}$, then $\psi \chi_\mathcal{O} \in \Lip_{{\textbf{w}},0}$, so for $v\in T^*$, we have
\ben |v||\psi(v)|&=&|v||\psi(v)|D\chi_\mathcal{O}(v)\le |v|D(\psi \chi_\mathcal{O})(v)+|v|D\psi(v)|\chi_\mathcal{O}(v^-)|\nonumber\\&\le& |v|D(\psi \chi_\mathcal{O})(v)+|v|D\psi(v)\to 0,\nonumber\eeqn
as $|v|\to\infty$, proving (b).

$(b)\Longrightarrow (a)$: Suppose $|v||\psi(v)|\to 0$ as $|v|\to\infty$. First observe that \ben |v|D\psi(v)&\le& |v||\psi(v)|+\frac{|v|}{|v|-1}(|v|-1)|\psi(v^-)|\nonumber\\&\le& |v||\psi(v)|+2(|v|-1)|\psi(v^-)|\to 0\nonumber\eeqn as $|v|\to\infty$. Then for $f\in L^\infty$ and $v\in T^*$, we have
\ben |v|D(\psi f)(v)&\le& |v||\psi(v)|Df(v)+|v|D\psi(v)|f(v^-)|\nonumber\\
&\le& (2|v||\psi(v)|+|v|D\psi(v))\|f\|_\infty\to 0,\nonumber\eeqn
as $|v|\to\infty$. Thus, $\psi f\in \Lip_{{\textbf{w}},0}$.
The proof of the boundedness of $M_\psi$ and of the formula $\|M_\psi\|=\eta_\psi$  is similar to the case when $M_\psi:L^\infty\to \Lip_{\textbf{w}}$.
\end{proof}

\subsection{Isometries}
As for all other multiplication operators in this article, there are no isometries among the multiplication operators from $L^\infty$ into $\Lip_{\textbf{w}}$ or $\Lip_{\textbf{w},0}$.

Assume $M_\psi$ is an isometry from $L^\infty$ to $\Lip_{\textbf{w}}$ or $\Lip_{{\textbf{w}},0}$. Then, for $v\in T$ the function $f_v=\frac1{|v|+1}\chi_v$ is in $\Lip_{{\textbf{w}},0}$ with $\|M_\psi \chi_v\|_{\textbf{w}}=\|\chi_v\|_\infty=1$. In particular, it follows that $|\psi(o)|=\frac12$ and for $v\in T^*$, $$(|v|+1)|\psi(v)|=1.$$ Thus, $|\psi(v)|=\frac1{|v|+1}$. On the other hand, taking as a test function $f$ the characteristic function of the set $\{v\in T: |v|\le 1\}$, we obtain
$$1=\|f\|_\infty=\|M_\psi f\|_{\textbf{w}}=|\psi(o)|+\max\left\{\sup_{|v|=1}|\psi(v)-\psi(o)|,\sup_{|v|=1}2|\psi(v)|\right\}\ge\frac32,$$ which yields a contradiction. Therefore, we obtain the following result.

\begin{theorem}\label{noisoeither} The are no isometric multiplication operators $M_\psi$ from $L^\infty$ to {\rm $\Lip_{\textbf{w}}$} or {\rm $\Lip_{{\textbf{w}},0}$}.
\end{theorem}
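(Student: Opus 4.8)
The plan is to argue by contradiction: assume $M_\psi$ is an isometry into $\Lip_{\textbf{w}}$ (respectively $\Lip_{{\textbf{w}},0}$) and extract rigid pointwise information about $|\psi|$ by feeding $M_\psi$ a family of finitely supported test functions. Because all the test functions I use are finitely supported, they lie in $\Lip_{{\textbf{w}},0}\subseteq\Lip_{\textbf{w}}$, so a single computation disposes of both target spaces at once. Throughout, the domain norm is the sup-norm on $L^\infty$ and the target norm is $\|g\|_{\textbf{w}}=|g(o)|+\sup_{w\in T^*}|w|Dg(w)$.

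First I would test the isometry on the characteristic functions $\chi_v$, each of unit $L^\infty$-norm. The key computation is $\|\psi\chi_v\|_{\textbf{w}}$: the difference $D(\psi\chi_v)(w)$ is nonzero only at $w=v$ and at the children of $v$, where in both cases it equals $|\psi(v)|$. Since $T$ has no terminal vertices, every vertex has a child, and the weighted supremum is therefore realized at a child of $v$, giving $\|\psi\chi_v\|_{\textbf{w}}=(|v|+1)|\psi(v)|$ for $v\in T^*$ (the $|g(o)|$ term vanishes as $v\ne o$), and $\|\psi\chi_o\|_{\textbf{w}}=2|\psi(o)|$. Imposing $\|M_\psi\chi_v\|_{\textbf{w}}=\|\chi_v\|_\infty=1$ then forces $|\psi(o)|=\tfrac12$ and $|\psi(v)|=\tfrac1{|v|+1}$ for every $v\in T^*$.

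Second, I would exhibit one test function whose weighted norm is pushed above $1$ by these constraints, contradicting the isometry. Taking $f$ to be the characteristic function of the ball $\{v:|v|\le 1\}$, which has unit sup-norm, the only nonzero differences $D(\psi f)$ occur at the vertices of length $1$ and $2$; the length-$2$ contribution is $2|\psi(v^-)|=2\cdot\tfrac12=1$ because $|v^-|=1$. Hence $\|M_\psi f\|_{\textbf{w}}=|\psi(o)|+\max\{\sup_{|v|=1}|\psi(v)-\psi(o)|,\,1\}\ge\tfrac12+1=\tfrac32$, which contradicts $\|M_\psi f\|_{\textbf{w}}=\|f\|_\infty=1$.

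The step needing the most care is the first one: correctly locating where the weighted supremum $\sup_{w\in T^*}|w|D(\psi\chi_v)(w)$ is attained. The factor $|v|+1$ rather than $|v|$ is exactly what pins $|\psi(v)|$ down to $\tfrac1{|v|+1}$, and this value is what drives the final contradiction through the length-$2$ term; the argument relies essentially on the standing assumption that $T$ has no terminal vertices, guaranteeing that $v$ always has a child at which the jump $|\psi(v)|$ is weighted by $|v|+1$.
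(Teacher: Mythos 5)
Your proof is correct and takes essentially the same approach as the paper: testing on the characteristic functions $\chi_v$ to force $|\psi(o)|=\tfrac12$ and $|\psi(v)|=\tfrac1{|v|+1}$, then using the characteristic function of $\{v:|v|\le 1\}$, whose length-$2$ differences push the weighted norm up to at least $\tfrac32$, for the contradiction. Your write-up is in fact slightly more careful than the paper's (which nominally introduces $f_v=\tfrac1{|v|+1}\chi_v$ but actually computes with $\chi_v$), in that you justify explicitly why the supremum $\sup_{w}|w|D(\psi\chi_v)(w)$ is attained at a child of $v$, using the no-terminal-vertices hypothesis.
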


\subsection{Compactness and Essential Norm}

The following two results are compactness criteria for multiplication operators from $L^\infty$ into $\Lip_{\textbf{w}}$ or $\Lip_{\textbf{w},0}$ similar to those given in the previous sections.

\begin{lemma}\label{compact_lemma} A bounded multiplication operator $M_\psi$ from $L^\infty$ to {\rm $\Lip_{\textbf{w}}$} is compact if and only if for every bounded sequence $\{f_n\}$ in $L^\infty$  converging to 0 pointwise, the sequence {\rm$\|\psi f_n\|_{\textbf{w}}$} approaches 0 as $n\to\infty$.\end{lemma}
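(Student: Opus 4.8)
The plan is to prove both implications by the same sequential-compactness scheme used in Lemmas~\ref{compact lemma section 2} and \ref{compact lemma section 3}, adapted to the fact that the domain is now $L^\infty$ rather than a Lipschitz-type space, which actually simplifies the extraction of convergent subsequences.

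For the forward implication, I would assume $M_\psi$ is compact and take a bounded sequence $\{f_n\}$ in $L^\infty$ converging to $0$ pointwise; after rescaling I may assume $\|f_n\|_\infty \le 1$. Compactness yields a subsequence $\{\psi f_{n_k}\}$ converging in the $\Lip_{\textbf{w}}$-norm to some $f \in \Lip_{\textbf{w}}$. The key step is to upgrade this norm convergence to pointwise convergence: at $o$ this is immediate from $|g(o)| \le \|g\|_{\textbf{w}}$, while for $v \in T^*$ I would invoke the growth estimate of Lemma~\ref{new}(a), namely $|(\psi f_{n_k} - f)(v)| \le (1+\log|v|)\|\psi f_{n_k} - f\|_{\textbf{w}}$. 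Since $f_n \to 0$ pointwise, the pointwise limit $f$ must vanish identically, so $\|\psi f_{n_k}\|_{\textbf{w}} \to 0$. As $0$ is then the only possible limit point of $\{\psi f_n\}$ in $\Lip_{\textbf{w}}$, the full sequence satisfies $\|\psi f_n\|_{\textbf{w}} \to 0$.

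For the converse, I would assume the stated sequential condition and verify compactness directly. Given any sequence $\{g_n\}$ in $L^\infty$ with $\|g_n\|_\infty \le 1$, the crucial simplification over the earlier lemmas is that $|g_n(v)| \le 1$ for \emph{every} $v \in T$, so $\{g_n\}$ is uniformly bounded on all of $T$. A diagonal argument over the countable vertex set then produces a subsequence $\{g_{n_k}\}$ converging pointwise to a function $g$ with $|g(v)| \le 1$, hence $g \in L^\infty$. Unlike the domains $\Lip$ and $\Lip_{\textbf{w}}$ treated in the previous sections, here no additional work is needed to confirm that the pointwise limit lies in the domain. The sequence $f_k = g_{n_k} - g$ is then bounded in $L^\infty$ and converges to $0$ pointwise, so the hypothesis gives $\|\psi f_k\|_{\textbf{w}} \to 0$, that is, $\psi g_{n_k} \to \psi g$ in $\Lip_{\textbf{w}}$. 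Thus every bounded sequence in $L^\infty$ has a subsequence whose image under $M_\psi$ converges, establishing compactness.

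Since the argument is a direct adaptation of proofs already given, I do not anticipate a genuine obstacle; the only point requiring care is the passage from $\Lip_{\textbf{w}}$-norm convergence to pointwise convergence in the forward direction, which is handled cleanly by the logarithmic growth bound of Lemma~\ref{new}(a) together with the trivial estimate at the root.
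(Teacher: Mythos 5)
Your proposal is correct and follows essentially the same argument as the paper: the forward direction uses Lemma~\ref{new}(a) to upgrade $\Lip_{\textbf{w}}$-norm convergence to pointwise convergence exactly as the paper does, and the converse extracts a pointwise-convergent subsequence and applies the hypothesis to $g_{n_k}-g$, just as in the paper's proof. If anything, your converse is stated slightly more carefully than the paper's, which loosely claims the subsequence converges ``uniformly'' when only pointwise convergence (plus boundedness) is available or needed.
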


\begin{proof}  Assume $M_\psi$ is compact and let $\{f_n\}$ be a bounded sequence in $L^\infty$ converging to 0 pointwise. By rescaling the sequence, if necessary, we may assume $\|f_n\|_\infty \le 1$ for all $n\in \N$. By the compactness of $M_\psi$, $\{f_n\}$ has a subsequence $\{f_{n_k}\}$ such that $\{\psi f_{n_k}\}$ converges in the {\rm$\Lip_{\textbf{w}}$}-norm to some function $f\in \Lip_{\textbf{w}}$.
Since by Lemma~\ref{new}, for $v\in T^*$, $$|\psi(v)f_{n_k}(v)-f(v)|\le(1+\log|v|)\|\psi f_{n_k}-f\|_{\textbf{w}},$$ and $|\psi(o)f_{n_k}(o)-f(o)|\le \|\psi f_{n_k}-f\|_{\textbf{w}},$ it follows that $\psi f_{n_k}\to f$ pointwise. Since by assumption, $f_n\to 0$ pointwise, the function $f$ must be identically 0. Thus, the only limit point of the sequence $\{\psi f_n\}$ in $\Lip_{\textbf{w}}$ is 0. Hence $\|\psi f_n\|_{\textbf{w}}\to 0$ as $n \to \infty$.

	Conversely, suppose $\|\psi f_n\|_{\textbf{w}}$ approaches 0 as $n\to\infty$ for every bounded sequence $\{f_n\}$ in $L^\infty$ converging to 0 pointwise. Let $\{g_n\}$ be a sequence in $L^\infty$ with $\|g_n\|_\infty\le 1$. Then some subsequence $\{g_{n_k}\}$ converges to a bounded function $g$. Thus, the sequence $f_{n_k}=g_{n_k}-g$ converges to 0 uniformly and $\|f_n\|_\infty$ is bounded. By the hypothesis, it follows that $\|\psi f_{n_k}\|_{\textbf{w}}\to 0$ as $k\to\infty$. Thus, $\psi g_{n_k}\to\psi g$ in $\Lip_{\textbf{w}}$. Therefore, $M_\psi$ is compact.
\end{proof}

By an analogous argument, we obtain the corresponding result for $M_\psi : L^\infty \to \Lip_{{\textbf{w},0}}$.

\begin{lemma}\label{compact_lemma_I} A bounded multiplication operator $M_\psi$ from $L^\infty$ to {\rm $\Lip_{{\textbf{w}},0}$} is compact if and only if for every bounded sequence $\{f_n\}$ in $L^\infty$  converging to 0 pointwise, the sequence {\rm$\|\psi f_n\|_{\textbf{w}}$} approaches 0 as $n\to\infty$.\end{lemma}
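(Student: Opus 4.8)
The plan is to mirror the proof of Lemma~\ref{compact_lemma} almost verbatim, the only structural change being that the target space is now $\Lip_{{\textbf{w}},0}$ rather than $\Lip_{\textbf{w}}$. The single new ingredient I would invoke is that $\Lip_{{\textbf{w}},0}$ is a \emph{closed} subspace of $\Lip_{\textbf{w}}$ (stated earlier in the introduction), so that $\Lip_{\textbf{w}}$-norm limits of sequences drawn from $\Lip_{{\textbf{w}},0}$ remain in $\Lip_{{\textbf{w}},0}$, and so that the growth estimate of Lemma~\ref{new}(a) applies unchanged to elements of $\Lip_{{\textbf{w}},0} \subset \Lip_{\textbf{w}}$.

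For the forward implication, I would assume $M_\psi:L^\infty\to\Lip_{{\textbf{w}},0}$ is compact and take a bounded sequence $\{f_n\}\subset L^\infty$ with $f_n\to 0$ pointwise, rescaling so that $\|f_n\|_\infty\le 1$. Compactness yields a subsequence $\{\psi f_{n_k}\}$ converging in the $\Lip_{\textbf{w}}$-norm to some $f$, which lies in $\Lip_{{\textbf{w}},0}$ by closedness. Applying Lemma~\ref{new}(a) to $\psi f_{n_k}-f$ gives, for $v\in T^*$, the bound $|\psi(v)f_{n_k}(v)-f(v)|\le(1+\log|v|)\|\psi f_{n_k}-f\|_{\textbf{w}}$, together with the trivial estimate at $o$; hence $\psi f_{n_k}\to f$ pointwise. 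Since $f_n\to 0$ pointwise, we conclude $f\equiv 0$, so $0$ is the only possible $\Lip_{\textbf{w}}$-limit point of $\{\psi f_n\}$, and therefore $\|\psi f_n\|_{\textbf{w}}\to 0$.

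For the converse, I would assume the stated sequential condition and take any $\{g_n\}\subset L^\infty$ with $\|g_n\|_\infty\le 1$. Since $T$ is countable, a diagonal argument produces a subsequence $\{g_{n_k}\}$ converging pointwise to a bounded function $g$. The differences $f_{n_k}=g_{n_k}-g$ are bounded in $L^\infty$ and converge to $0$ pointwise, so the hypothesis gives $\|\psi f_{n_k}\|_{\textbf{w}}\to 0$, that is, $\psi g_{n_k}\to\psi g$ in the $\Lip_{\textbf{w}}$-norm. Because $M_\psi$ maps into $\Lip_{{\textbf{w}},0}$ and this space is closed, $\psi g\in\Lip_{{\textbf{w}},0}$ and the convergence takes place in $\Lip_{{\textbf{w}},0}$, which establishes the compactness of $M_\psi$.

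I anticipate no genuine obstacle here: all the quantitative content (the logarithmic growth bound and the boundedness of the test data) is already packaged in Lemma~\ref{new} and is reused identically. The only point that deserves explicit mention — and the sole place where the argument formally differs from Lemma~\ref{compact_lemma} — is the appeal to closedness of $\Lip_{{\textbf{w}},0}$ to guarantee that the relevant limit functions remain in the little weighted Lipschitz space; everything else is a direct transcription.
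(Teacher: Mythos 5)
Your proposal is correct and takes essentially the same approach as the paper: the paper's proof of Lemma~\ref{compact_lemma_I} is simply the statement that the argument of Lemma~\ref{compact_lemma} carries over, and your write-up is exactly that transcription. Your one added remark — that the limit functions stay in $\Lip_{\textbf{w},0}$ because $M_\psi$ maps into $\Lip_{\textbf{w},0}$ and this subspace is closed in $\Lip_{\textbf{w}}$ — is precisely the point that makes the analogy legitimate, so nothing is missing.
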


\begin{theorem}\label{compactLinfty} For a bounded operator $M_\psi$ from $L^\infty$ to {\rm $\Lip_{\textbf{w}}$}, the following statements are equivalent:
\begin{enumerate}
\item[\rm{(a)}] $M_\psi$ is compact.
\item[\rm{(b)}] $\lim\limits_{|v|\to \infty}|v||\psi(v)|=0$.
\end{enumerate}
\end{theorem}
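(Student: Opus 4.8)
The plan is to establish the equivalence through the compactness criterion of Lemma~\ref{compact_lemma}, which reduces compactness of $M_\psi:L^\infty\to\Lip_{\textbf{w}}$ to the assertion that $\|\psi f_n\|_{\textbf{w}}\to 0$ for every sequence $\{f_n\}$ bounded in $L^\infty$ and converging to $0$ pointwise. Both implications then become statements about such sequences, handled respectively by a family of test functions and by the standard head/tail split.

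For the implication $(a)\Longrightarrow(b)$, I would argue through point-mass test functions. It suffices to show $|v_n||\psi(v_n)|\to 0$ for an arbitrary sequence $\{v_n\}$ in $T$ with $|v_n|\to\infty$. Setting $f_n=\chi_{v_n}$, we have $\|f_n\|_\infty=1$ and $f_n\to 0$ pointwise, so Lemma~\ref{compact_lemma} gives $\|\psi f_n\|_{\textbf{w}}\to 0$. A direct computation shows $D(\psi\chi_{v_n})$ is supported only at $v_n$ and its children, and the increment at $v_n$ itself yields the lower bound $\|\psi\chi_{v_n}\|_{\textbf{w}}\ge |v_n|D(\psi\chi_{v_n})(v_n)=|v_n||\psi(v_n)|$. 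Hence $|v_n||\psi(v_n)|\to 0$, which gives $(b)$.

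For $(b)\Longrightarrow(a)$, I would again invoke Lemma~\ref{compact_lemma} and show $\|\psi f_n\|_{\textbf{w}}\to 0$ for an arbitrary bounded sequence $\{f_n\}$, say $\|f_n\|_\infty\le s$, converging to $0$ pointwise. The decomposition $\psi(v)f_n(v)-\psi(v^-)f_n(v^-)=\psi(v)\big(f_n(v)-f_n(v^-)\big)+\big(\psi(v)-\psi(v^-)\big)f_n(v^-)$ yields the pointwise bound $|v|D(\psi f_n)(v)\le s\big(2|v||\psi(v)|+|v|D\psi(v)\big)$. As already observed in the proof of Theorem~\ref{charbound_0}, hypothesis $(b)$ also forces $|v|D\psi(v)\to 0$, since $|v|D\psi(v)\le |v||\psi(v)|+\tfrac{|v|}{|v|-1}|v^-||\psi(v^-)|$. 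Thus both coefficients tend to $0$. I would then split the supremum over $T^*$: on the tail $\{|v|\ge N\}$, choosing $N$ large makes the bound uniformly below $\e$ for every $n$; on the finite set $\{|v|\le N\}$ the pointwise convergence $f_n\to 0$ is uniform, forcing the finitely many terms $|v|D(\psi f_n)(v)$, as well as the root term $|\psi(o)f_n(o)|$, to $0$ as $n\to\infty$. Combining the two estimates gives $\|\psi f_n\|_{\textbf{w}}\to 0$, and compactness follows.

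The two ingredients driving the argument are the telescoping identity for $D(\psi f_n)$, which separates the increments of $\psi$ from those of $f_n$, and the decay $|v|D\psi(v)\to 0$ extracted from $(b)$. Neither is a genuine obstacle; the only point requiring care is the head/tail split, where one must ensure the tail estimate is uniform in $n$ (supplied by the decay of the two coefficients) while the control on the finite head rests on pointwise convergence being uniform on finite sets.
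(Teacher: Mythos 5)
Your proof is correct and follows essentially the same route as the paper: both directions go through Lemma~\ref{compact_lemma}, with necessity obtained from norm-one test functions tending to $0$ pointwise and sufficiency from the telescoping bound $|v|D(\psi f_n)(v)\le \|f_n\|_\infty\left(2|v||\psi(v)|+|v|D\psi(v)\right)$, the observation that $(b)$ forces $|v|D\psi(v)\to 0$, and a head/tail split. The only (immaterial) difference is that for $(a)\Longrightarrow(b)$ you use the point masses $\chi_{v_n}$ while the paper uses the indicator of the tail $\{w\in T:|w|\ge |v_n|\}$; both yield the same lower bound $|v_n||\psi(v_n)|\le \|\psi f_n\|_{\textbf{w}}$.
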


\begin{proof} $(a)\Longrightarrow (b)$: Assume $M_\psi$ is compact. Let $\{v_n\}$ be a sequence in $T$ such that $|v_n|\to\infty$ as $n\to\infty$. For $n\in\N$, let $f_n$ denote the characteristic function of the set $\{w\in T: |w|\ge |v_n|\}$. Then  $\|f_n\|_\infty=1$ and $f_n\to 0$ pointwise. By Lemma~\ref{compact_lemma} and the compactness of $M_\psi$, it follows that $$|v_n||\psi(v_n)|=|v_n|D(\psi f_n)(v_n)\le \|M_\psi f_n\|_{\textbf{w}}\to 0$$
as $n\to\infty$.

$(b)\Longrightarrow (a)$: Assume $\lim\limits_{|v|\to \infty}|v||\psi(v)|=0$ and that $\psi$ is not identically 0. In particular, $\psi$ is bounded. Let $\{f_n\}$ be a sequence in $L^\infty$ converging pointwise to 0 and such that $\|f_n\|_\infty$ is bounded above by some positive constant $C$. Then corresponding to $\e>0$, there exists $N\in\N$ such that $|v||\psi(v)|<\displaystyle\frac{\e}{4C}$ for all vertices $v$ such that $|v|\ge N$. Therefore, for $|v|>N$ and $n\in \N$, we have
$$|v|D(\psi f_n)(v)\le |v||\psi(v)|Df_n(v)+|v|D\psi(v)|f_n(v^-)|<\e.$$
Furthermore, the sequence $\{f_n\}$ converges to 0 uniformly on the set $\{v\in T: |v|\le N\}$ so that $|f_n(v)|<\frac{\e}{4N\|\psi\|_\infty}$ for all $n$ sufficiently large. Hence $|v|D(\psi f_n)(v)<\e$ for all $v\in T^*$ and all $n$ sufficiently large. Consequently, $\|\psi f_n\|_{\textbf{w}}\to 0$ as $n\to \infty$. Using Lemma~\ref{compact_lemma}, we deduce that $M_\psi$ is compact.
\end{proof}

	Since the above proof is also valid when $M_\psi$ is a bounded operator from $L^\infty$ to $\Lip_{{\textbf{w}},0}$, through the application of Lemma~\ref{compact_lemma_I}, from Theorems~\ref{charbound_0} and \ref{compactLinfty} we obtain the following result.

\begin{corollary}\label{corcom} For a function $\psi$ on $T$, the following statements are equivalent:
\begin{enumerate}
\item[\rm{(a)}] {\rm $M_\psi:L^\infty\to \Lip_{{\textbf{w}}}$} is compact.
\item[\rm{(b)}] {\rm $M_\psi:L^\infty\to \Lip_{{\textbf{w}},0}$} is bounded.
\item[\rm{(c)}] {\rm $M_\psi:L^\infty\to \Lip_{{\textbf{w}},0}$} is compact.
\item[\rm{(d)}] $\lim\limits_{|v|\to \infty}|v||\psi(v)|=0$.
\end{enumerate}
\end{corollary}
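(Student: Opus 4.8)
The plan is to make condition (d) the hub and prove that each of (a), (b), and (c) is equivalent to it; the four statements are then mutually equivalent. The equivalence (b)$\iff$(d) is immediate, since it is exactly the content of Theorem~\ref{charbound_0}, and so costs nothing. The real work is to connect the two \emph{compactness} statements (a) and (c) to (d), and here the point to keep in mind is that compactness of $M_\psi$ presupposes boundedness, whereas (d) is a bare growth condition on $\psi$; the boundedness characterizations in Theorems~\ref{charbound} and~\ref{charbound_0} are precisely what bridge that gap.

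For (a)$\iff$(d) I would argue as follows. If $M_\psi:L^\infty\to\Lip_{\textbf{w}}$ is compact, it is in particular bounded, and Theorem~\ref{compactLinfty} then gives (d). Conversely, if (d) holds, then $\sup_{v\in T}|v||\psi(v)|<\infty$ because a sequence tending to $0$ is bounded; by Theorem~\ref{charbound} this makes $M_\psi:L^\infty\to\Lip_{\textbf{w}}$ bounded, and Theorem~\ref{compactLinfty} then upgrades (d) to compactness, yielding (a). For (c)$\iff$(d) the pattern is parallel: a compact operator is bounded, so (c) implies (b), which gives (d) through Theorem~\ref{charbound_0}; conversely (d) implies (b) by that same theorem, so $M_\psi:L^\infty\to\Lip_{{\textbf{w}},0}$ is bounded, and then the $\Lip_{{\textbf{w}},0}$-analog of Theorem~\ref{compactLinfty} promotes (d) to compactness (c).

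The one genuinely non-clerical step is invoking that $\Lip_{{\textbf{w}},0}$-analog of Theorem~\ref{compactLinfty}, and this is the step I would scrutinize rather than the rest. The argument establishing $(b)\Rightarrow(a)$ inside Theorem~\ref{compactLinfty} takes a bounded sequence $\{f_n\}$ in $L^\infty$ converging to $0$ pointwise, splits $T$ into $\{|v|\ge N\}$ and $\{|v|<N\}$, and shows $\|\psi f_n\|_{\textbf{w}}\to 0$ using only the hypothesis $\lim_{|v|\to\infty}|v||\psi(v)|=0$ together with the compactness criterion of Lemma~\ref{compact_lemma}. Since this estimate never uses the specific target space beyond that criterion, and since Lemma~\ref{compact_lemma_I} is the verbatim $\Lip_{{\textbf{w}},0}$-counterpart of Lemma~\ref{compact_lemma}, the same computation carries over word for word with Lemma~\ref{compact_lemma_I} in place of Lemma~\ref{compact_lemma}. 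Confirming that transfer is the crux; once it is in hand, assembling the three equivalences through the hub (d) is purely formal, and the corollary follows.
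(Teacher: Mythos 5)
Your proposal is correct and follows essentially the same route as the paper: the paper likewise combines Theorem~\ref{charbound_0}, Theorem~\ref{compactLinfty}, and the observation that the proof of Theorem~\ref{compactLinfty} transfers verbatim to the $\Lip_{{\textbf{w}},0}$ setting once Lemma~\ref{compact_lemma_I} replaces Lemma~\ref{compact_lemma}. Your version is slightly more economical in that routing (c)$\Rightarrow$(d) through boundedness and Theorem~\ref{charbound_0} means you only need the one direction of the transferred proof, but the substance is identical.
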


	We now determine the essential norm of the bounded multiplication operators from $L^\infty$ to $\Lip_{\textbf{w}}$.

\begin{theorem}\label{thmess} Let {\rm $M_\psi:L^\infty\to\Lip_{\textbf{w}}$} be bounded. Then
$$\|M_\psi\|_e=\lim_{n\to\infty}\sup_{|v|\ge n}|v|\left[|\psi(v)|+|\psi(v^-)|\right].$$
\end{theorem}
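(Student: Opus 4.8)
The plan is to establish the two matching inequalities for the quantity
$E := \lim_{n\to\infty}\sup_{|v|\ge n}|v|\left[|\psi(v)|+|\psi(v^-)|\right]$,
whose limit exists because the suprema are nonincreasing in $n$ and bounded above by $\eta_\psi<\infty$. I would first treat the \emph{upper estimate} $\|M_\psi\|_e\le E$ by approximating $M_\psi$ with symbol-truncations. For $n\in\N$ set $\psi_n=\psi\,\chi_{B_n}$, where $B_n=\{v\in T:|v|\le n\}$. Since $|v||\psi_n(v)|=0$ for $|v|>n$, Theorem~\ref{compactLinfty} shows that $M_{\psi_n}:L^\infty\to\Lip_{\textbf{w}}$ is compact, so $\|M_\psi\|_e\le\|M_\psi-M_{\psi_n}\|=\|M_{\psi-\psi_n}\|=\eta_{\psi-\psi_n}$ by Theorem~\ref{charbound}. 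Because $\psi-\psi_n$ vanishes on $B_n$, a short computation gives $\eta_{\psi-\psi_n}\le\sup_{|v|\ge n+1}|v|\left[|\psi(v)|+|\psi(v^-)|\right]$, and letting $n\to\infty$ yields $\|M_\psi\|_e\le E$.

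For the \emph{lower estimate} $\|M_\psi\|_e\ge E$, the case $E=0$ is immediate: then $\lim_{|v|\to\infty}|v||\psi(v)|=0$, so $M_\psi$ is compact by Theorem~\ref{compactLinfty} and $\|M_\psi\|_e=0$. When $E>0$, I would choose vertices $v_n$ with $|v_n|\to\infty$ and $|v_n|\left[|\psi(v_n)|+|\psi(v_n^-)|\right]\to E$, and define $f_n$ to be supported on $\{v_n,v_n^-\}$ with $f_n(v_n)=\overline{\psi(v_n)}/|\psi(v_n)|$ and $f_n(v_n^-)=-\overline{\psi(v_n^-)}/|\psi(v_n^-)|$, using the value $0$ wherever $\psi$ vanishes. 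Then $\|f_n\|_\infty\le 1$, and localizing the unimodular symbol of Theorem~\ref{charbound} to these two vertices still produces $D(\psi f_n)(v_n)=|\psi(v_n)|+|\psi(v_n^-)|$, so that $\|M_\psi f_n\|_{\textbf{w}}\ge|v_n|\left[|\psi(v_n)|+|\psi(v_n^-)|\right]\to E$.

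The decisive point in the lower bound is the behavior of $\{f_n\}$ against compact operators. Each $f_n$ is finitely supported and $f_n\to0$ pointwise, hence $\{f_n\}$ lies in $c_0(T)$ and converges to $0$ weakly there (a bounded, coordinatewise-null sequence in $c_0$ is weakly null, since $(c_0)^*=\ell^1$); as the inclusion $c_0(T)\hookrightarrow L^\infty$ is a bounded linear map, it is weak-to-weak continuous, so $f_n\to0$ weakly in $L^\infty$. Consequently, for any compact $K:L^\infty\to\Lip_{\textbf{w}}$, complete continuity gives $\|Kf_n\|_{\textbf{w}}\to0$, whence $\|M_\psi-K\|\ge\limsup_n\|(M_\psi-K)f_n\|_{\textbf{w}}\ge\limsup_n\|M_\psi f_n\|_{\textbf{w}}\ge E$. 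Taking the infimum over all compact $K$ gives $\|M_\psi\|_e\ge E$, and combined with the upper estimate this proves the stated formula.

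The main obstacle is exactly this lower-bound step. In the earlier sections the domains $\Lip_0$ and $\Lip_{\textbf{w},0}$ are separable and carry the weak-convergence criteria of Lemmas~\ref{weakconv_Lip} and \ref{weakconv_Lipw}, which let one pass test functions through compact operators; here the domain $L^\infty$ is non-separable and no analogous lemma is available. The device that repairs the argument is to insist on \emph{finitely supported} test functions, reducing the weak-convergence question to $c_0(T)$ and then transporting weak nullity up to $L^\infty$. Everything else, namely the truncation computation of $\eta_{\psi-\psi_n}$ and the verification that the two-vertex test function realizes $|\psi(v_n)|+|\psi(v_n^-)|$, is routine.
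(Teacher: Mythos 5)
Your proof is correct, and its decisive step is genuinely different from --- and more careful than --- the paper's own argument. On the upper bound the two proofs coincide in substance: your symbol truncation $M_{\psi\chi_{B_n}}$ is exactly the paper's operator $M_\psi K_n$ (where $K_nf=f\chi_{B_n}$), and your bound on $\eta_{\psi-\psi_n}$ is the same two-case computation (vertices with $|v|=n+1$ versus $|v|>n+1$) that the paper carries out directly on $\|(M_\psi-M_\psi K_n)f\|_{\textbf{w}}$; routing it through the norm formula of Theorem~\ref{charbound} and the compactness criterion of Theorem~\ref{compactLinfty} is tidier packaging, not a new idea. The real divergence is in the lower bound. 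The paper tests against functions supported on the infinite sets $\{v:|v|\ge|v_n|\}$, with values $\pm\overline{\psi(v)}/|\psi(v)|$ according to the parity of $|v|$, and then asserts that for every compact $K:L^\infty\to\Lip_{\textbf{w}}$ some subsequence satisfies $\|Kf_{n_k}\|_{\textbf{w}}\to0$. No justification is given, and none is available at that level of generality: a bounded pointwise-null sequence in $L^\infty$ need not be weakly null, and for a concrete failure take $\psi(v)=1/(|v|+1)$ (bounded, non-compact symbol), let $\mathcal{U}$ be an ultrafilter on $T$ containing every set $\{v:|v|\ge m,\ |v|\ \text{even}\}$, and set $Kf=(\lim_{\mathcal{U}}f)\,y_0$ with $y_0\ne0$; this $K$ is rank one, hence compact, yet $Kf_n=y_0$ for every $n$, so no subsequence of $\{Kf_n\}$ is norm-null. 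Thus the paper's intermediate claim is false for its own test functions, and its lower-bound argument has a genuine gap (in the earlier sections this step was legitimized by Lemmas~\ref{weakconv_Lip} and~\ref{weakconv_Lipw} on the separable little spaces, which have no analogue over $L^\infty$). Your repair is exactly right: finitely supported test functions lie in $c_0(T)$, where bounded coordinatewise-null sequences are weakly null since $(c_0(T))^*=\ell^1(T)$; the bounded inclusion $c_0(T)\hookrightarrow L^\infty$ is weak-to-weak continuous, so your $f_n$ are weakly null in $L^\infty$ and complete continuity of compact operators applies legitimately. Since the two-point functions still realize $D(\psi f_n)(v_n)=|\psi(v_n)|+|\psi(v_n^-)|$ (also when one of the two values of $\psi$ vanishes), you get $\|M_\psi\|_e\ge E$, and combined with the truncation estimate this proves the theorem while closing the gap in the published proof.
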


\begin{proof} Set $B(\psi)=\lim\limits_{n\to\infty}\sup\limits_{|v|\ge n}|v|\left[|\psi(v)|+|\psi(v^-)|\right].$ In the case $B(\psi)=0$, then $\lim\limits_{|v|\to\infty}|v|\psi(v)=0$, so by Theorem~\ref{compactLinfty}, $M_\psi$ is compact and thus $\|M_\psi\|_e=0$. So assume $B(\psi)>0$. Then there exists a sequence $\{v_n\}$ in $T$ such that $1\le |v_n|\to\infty$ and
$$B(\psi)=\lim_{n\to\infty}|v_n|\left[|\psi(v_n)|+|\psi(v_n^-)|\right].$$
For each $n\in \N$ let $f_n$ be the function on $T$ defined by
$$f_n(v)=\begin{cases}\ \ \ \  0&\quad\hbox{ if }|v|<|v_n| \hbox{ or }\psi(v)=0,\\
%&\ \\
\ \ \overline{\psi(v)}/|\psi(v)| &\quad\hbox{ if }|v|\ge |v_n|, |v| \hbox{ is even, and  }\psi(v)\ne 0,\\
%\ \\
-\overline{\psi(v)}/|\psi(v)| &\quad\hbox{ otherwise.}\end{cases}$$

	Then $\|f_n\|_\infty=1$ and $\{f_n\}$ converges to 0 pointwise. Thus, for any compact operator $K:L^\infty\to\Lip_{\textbf{w}}$, %which is necessarily completely continuous,
 there exists a subsequence $\{f_{n_k}\}$ such that $\|Kf_{n_k}\|_{\textbf{w}}\to 0$ as $k\to\infty$.  Thus
\ben \|M_\psi-K\|&\ge &\limsup_{k\to\infty}\|(M_\psi-K)f_{n_k}\|_{\textbf{w}}\ge \limsup_{k\to\infty}\|\psi f_{n_k}\|_{\textbf{w}}\nonumber\\
&=&\limsup_{k\to\infty}\sup_{|v|\ge|v_{n_k}|}|v|\left[|\psi(v)|+|\psi(v^-)|\right]=B(\psi).\nonumber\eeqn
Therefore, $\|M_\psi\|_e\ge B(\psi).$

We now show that $\|M_\psi\|_e\le B(\psi)$.
 For each $n\in\N$, define the operator $K_n$ on $L^\infty$ by
$$K_nf(v)=\begin{cases} f(v)&\quad\hbox{ if }|v|\le n\\
0&\quad\hbox{ if }|v|> n.\end{cases}$$
%where $v_n$ is the ancestor of $v$ of length $n$.
Then, for $v\in T^*$, we have
$$|v|D(K_nf)(v)=\begin{cases} |v|Df(v)& \quad\hbox{ for }1\le |v|\le n,\\
(n+1)|f(v^-)|& \quad\hbox{ for }|v|= n+1,\\
0& \quad\hbox{ for }|v|>n+1.\end{cases}$$
Thus, $K_nf\in\Lip_{\textbf{w}}$ with $\|K_nf\|_{\textbf{w}}\le |f(o)|+2n\|f\|_\infty.$

Assume $\{f_k\}$ is a sequence in $L^\infty$ with $\|f_k\|_\infty\le 1$. Then there exists a subsequence $\{f_{k_j}\}$ converging pointwise to some function $f\in L^\infty$. Thus, \small
\ben \|K_nf_{k_j}-K_nf\|_{\textbf{w}} &=&|f_{k_j}(o)-f(o)|\nonumber\\
\  \ \ \ &+&\max\left\{\sup_{1\le |v|\le n}|v|D(f_{k_j}-f)(v),\sup_{|v|=n+1}|v||f_{k_j}(v^-)-f(v^-)|\right\}\nonumber\\&\le &|f_{k_j}(o)-f(o)|\nonumber\\
&+&2n\max\left\{\sup_{1\le |v|\le n}D(f_{k_j}-f)(v),\sup_{|v|=n+1}|f_{k_j}(v^-)-f(v^-)|\right\}.\nonumber\eeqn
\normalsize  So $\|K_nf_{k_j}-K_nf\| \to 0$ as $j\to\infty$. Therefore, $K_n$ is compact, and thus, since $M_\psi$ is bounded, $M_\psi K_n$ is also compact.

For $f\in L^\infty$, we have \small
\ben \|(M_\psi -M_\psi K_n) f\|_{\textbf{w}}&= &\sup_{|v|>n}|v||\psi(v)f(v)-\psi(v^-)f(v^-)+\psi(v^-)K_nf(v^-)|\nonumber\\
 \hskip-20pt&= & \max\left\{\sup_{|v|=n+1}|v||\psi(v)||f(v)|,\sup_{|v|>n+1}|v||\psi(v)f(v)-\psi(v^-)f(v^-)|\right\}\nonumber\\
\hskip-20pt&\le &\sup_{|v|> n}|v|\left[|\psi(v)|+|\psi(v^-)|\right]\|f\|_\infty.\nonumber\eeqn
\normalsize Therefore, we obtain
\ben \|M_\psi\|_e&\le &\limsup_{n\to\infty}\|M_\psi-M_\psi K_n\|\nonumber\\
&=&\limsup_{n\to\infty}\sup_{\|f\|_\infty = 1}\|(M_\psi -M_\psi K_n)f\|_{\textbf{w}}\nonumber\\
&\le & B(\psi),\nonumber\eeqn
thus completing the proof.
\end{proof}

\bibliographystyle{amsplain}
\bibliography{references.bib}
\end{document}